\newtheorem{theorem}{Theorem}
\newtheorem{lemma}[theorem]{Lemma}
\newtheorem{corollary}[theorem]{Corollary}
\newtheorem{conj}[theorem]{Conjecture}
\newtheorem{definition}[theorem]{Definition}
\newtheorem{ex}[theorem]{Example}
\newtheorem{prop}[theorem]{Proposition}
\newtheorem*{theorem*}{Theorem}
\newtheorem{remark}[theorem]{Remark}
\newcommand{\bl}[1]{{\color{blue} #1}}
\newcommand{\ldownarrow}{\Big\downarrow}
\newcommand{\precdot}{\prec\mathrel{\mkern-5mu}\mathrel{\cdot}}
\tikzstyle{vertex}=[circle, draw, inner sep=0pt, minimum size=13pt]
\tikzstyle{svertex}=[circle, draw, inner sep=0pt, minimum size=8pt]
\begin{document}

\title{Shellability of Kohnert posets}
\author[*]{Celia Kerr}
\author[**]{Nicholas W. Mayers}
\author[*]{Nicholas Russoniello}

\affil[*]{Department of Mathematics, College of William \& Mary, Williamsburg, VA 23185}
\affil[**]{Department of Mathematics, North Carolina State University, Raleigh, NC 27695}

\maketitle

\begin{abstract} 
\noindent
In this paper, we are concerned with identifying among the family of posets associated with Kohnert polynomials, those whose order complex has a certain combinatorial property. In particular, for numerous families of Kohnert polynomials, including key polynomials, we determine when the associated Kohnert posets are (EL-)shellable. Interestingly, under certain diagram restrictions, (EL-)shellability of a Kohnert poset is equivalent to multiplcity freeness of the associated Kohnert polynomial.
\end{abstract}

\section{Introduction}

In his thesis (\textbf{\cite{Kohnert}}, 1990), Kohnert showed that Demazure characters (a.k.a. key polynomials) encode certain collections of diagrams consisting of cells distributed in the first quadrant. Moreover, it was conjectured, and subsequently proven \textbf{\cite{AssafSchu, Winkel2, Winkel1}}, that a similar result applied to Schubert polynomials. Motivated by this, Assaf and Searles (\textbf{\cite{KP1}}, 2022) applied the corresponding polynomial construction paradigm more generally and defined the notion of a ``Kohnert polynomial". Such polynomials encode certain collections of diagrams consisting of cells distributed in the first quadrant that are related to a ``seed" diagram by a sequence of moves, called ``Kohnert moves", that change the position of at most one cell (see also \textbf{\cite{KP2}}). In (\textbf{\cite{KP2}}, 2022), the author notes that one can define a natural poset structure on the collection of diagrams encoded by the terms of a Kohnert polynomial. Moreover, the author of \textbf{\cite{KP2}} illustrates that such ``Kohnert posets" arising from Kohnert polynomials are not generally well-behaved, noting that, in general, they are not lattices, ranked, nor do they have a unique minimal element. In recent work by L. Colmenarjo, et al. (\textbf{\cite{KPoset1}}, 2023), the authors initiate an investigation into the ``not-so-well-behaved" structure of such posets, focusing on identifying when they are ranked and/or bounded. Here, we consider when Kohnert posets are (EL-)shellable and the consequences regarding the associated polynomial.

Starting with modest restrictions, we first consider the Kohnert posets associated with diagrams for which either (1) there is at most one cell per column or (2) the first two rows are empty. Under these restrictions, we are able to find a complete characterization of when the associated Kohnert poset is (EL-)shellable. In fact, for both cases, the indexing diagram (modulo cells in the first row) must be what we call a ``hook diagram" (see Theorem~\ref{thm:hook}). Moreover, for diagrams of the form (1) or (2), we find that (EL-)shellability of the associated Kohnert poset is equivalent to the corresponding Kohnert polynomial being multiplicity free (see Theorem~\ref{thm:hookmf}). With these results, it remains to consider the case of diagrams that contain at least one cell within the first two rows and at least one column with more than one cell.

Based on computational evidence, the case of Kohnert posets associated with diagrams containing at least one cell within the first two rows and at least one column with more than one cell seems much more complicated than that previously considered. Consequently, in this direction we focus on a special case of historical significance: key diagrams. The Kohnert polynomials of key diagrams are Demazure characters; this result forms the motivation for \textbf{\cite{Kohnert}}. In the case of key diagrams, we are able to characterize when the associated Kohnert posets are graded and EL-shellable (see Theorem~\ref{thm:kshell}). Our characterization is in terms of an associated weak composition avoiding three different patterns. Similar to the families of diagrams discussed above, we find that there is a relationship between (EL-)shellability of the Kohnert poset and the Kohnert polynomial being multiplicity-free, though, in this case, the relationship is not as strong. In particular, we find that for key diagrams, if the Kohnert poset is graded and EL-shellable, then the associated Kohnert polynomial is multiplicity-free (see Theorem~\ref{thm:keymf}). On the other hand, we are also able to find an example of a key diagram for which the Kohnert poset is not shellable, but the Kohnert polynomial is multiplicity-free.

The remainder of the paper is organized as follows. In Section~\ref{sec:prelim} we cover the requisite background from the theory of posets and formally define Kohnert posets and polynomials. Following this, in Section~\ref{sec:strucres} we establish some structural results relevant to identifying when Kohnert posets are shellable. Then in Sections~\ref{sec:hook} and~\ref{sec:key}, we apply the aforementioned structural results to give complete characterizations of those diagrams belonging to three different families which generate (EL-)shellable Kohnert posets. Section~\ref{sec:hook} focuses on those diagrams with at most one cell per column as well as those for which the first two rows are empty. In the more complicated case of diagrams containing cells within their first two rows and at least one column with more than one cell, we consider the special case of key diagrams. For such diagrams, in Section~\ref{sec:key} we find a complete characterization of those which generate (EL-)shellable Kohnert posets in the case that the poset is graded. In addition to the characterizations of (EL-)shellability, Sections~\ref{sec:hook} and~\ref{sec:key} also contain results concerning the polynomial consequences of (EL-)shellability for the associated families of diagrams. Finally, in Section~\ref{sec:ep}, we discuss directions for future research.

\section{Preliminaries}\label{sec:prelim}

In this section, we give the requisite preliminaries from the theory of posets and define our posets of interest.

\subsection{Posets}

Recall that a poset $(\mathcal{P},\preceq)$ consists of a set $\mathcal{P}$ along with a binary relation $\preceq$ between the elements of $\mathcal{P}$ which is reflexive, anti-symmetric, and transitive. When no confusion will arise, we simply denote a poset $(\mathcal{P}, \preceq)$ by $\mathcal{P}$. Two posets $\mathcal{P}$ and $\mathcal{Q}$ are \textit{isomorphic}, denoted $\mathcal{P}\cong \mathcal{Q}$, if there exists an order-preserving bijection $\mathcal{P}\to\mathcal{Q}$.  Ongoing, we assume that all posets are finite.

Let $\mathcal{P}$ be a poset and take $x,y\in\mathcal{P}$. If $x\preceq y$ and $x\neq y$, then we call $x\preceq y$ a \textbf{strict relation} and write $x\prec y$. Ongoing, we let $\le$ and $<$ denote the relation and strict relation, respectively, corresponding to the natural ordering on $\mathbb{Z}$. For $x,y\in\mathcal{P}$ satisfying $x\preceq y$, we set $[x,y]=\{z\in\mathcal{P}~|~x\preceq z\preceq y\}$ and treat $[x,y]$ as a poset with the ordering inherited from $\mathcal{P}$; that is, for $z_1,z_2\in [x,y]$, $z_1\prec_{[x,y]}z_2$ if and only if $z_1\prec_{\mathcal{P}}z_2$. If $x\prec y$ and there exists no $z\in \mathcal{P}$ satisfying $x\prec z\prec y$, then $x\prec y$ is a \textbf{covering relation}, denoted $x\precdot y$. Covering relations are used to define a visual representation of $\mathcal{P}$ called the \textbf{Hasse diagram} -- a graph whose vertices correspond to elements of $\mathcal{P}$ and whose edges correspond to covering relations (see Figure~\ref{fig:poset}). We say that $x\in\mathcal{P}$ is a minimal element (resp., maximal element) if there exists no $z\in\mathcal{P}$ such that $z\prec x$ (resp., $z\succ x$). If $\mathcal{P}$ has a unique minimal and maximal element, then we say that $\mathcal{P}$ is \textbf{bounded}. The poset $\mathcal{P}$ is called \textbf{ranked} if there exists a \textit{rank function}, i.e., a function $\rho:\mathcal{P}\to\mathbb{Z}_{\ge 0}$ such that
\begin{enumerate}
    \item if $x\prec y,$ then $\rho(x)<\rho(y),$ and
    \item if $x\prec y$ is a covering relation, then $\rho(y)=\rho(x)+1.$
\end{enumerate}

\begin{ex}\label{ex:poset}
Let $\mathcal{P}_1=\{1,2,3,4\}$ be the poset with $1\prec 2\prec 3,4,$ and let $\mathcal{P}_2=\{1,2,3,4,5\}$ be the poset with $1\prec 2\prec 4\prec 5$ and $1\prec 3\prec 5.$ The Hasse diagrams of $\mathcal{P}_1$ and $\mathcal{P}_2$ are illustrated in Figure~\ref{fig:poset}. Notice that $\mathcal{P}_1$ is ranked but not bounded, while $\mathcal{P}_2$ is bounded but not ranked.

\begin{figure}[H]
$$\begin{tikzpicture}
\def\Node{\node [circle, fill, inner sep=1.5pt]}

\Node[label=below:{\small 1}] (l1) at (-5,0){};
\Node[label=left:{\small 2}] (l2) at (-5,1){};
\Node[label=left:{\small 3}] (l3) at (-6,2){};
\Node[label=right:{\small 4}] (l4) at (-4,2){};

\draw (l1)--(l2)--(l3);
\draw (l2)--(l4);

\Node[label=below:{\small 1}] (r1) at (0,0){};
\Node[label=left:{\small 2}] (r2) at (-1,1){};
\Node[label=right:{\small 3}] (r3) at (1,1.5){};
\Node[label=left:{\small 4}] (r4) at (-1,2){};
\Node[label=above:{\small 5}] (r5) at (0,3){};

\draw (r1)--(r2)--(r4)--(r5)--(r3)--(r1);

\node at (-5,-1){(a)};
\node at (0,-1){(b)};
\end{tikzpicture}$$
\caption{Hasse diagrams of (a) $\mathcal{P}_1$ and (b) $\mathcal{P}_2$}
\label{fig:poset}
\end{figure}
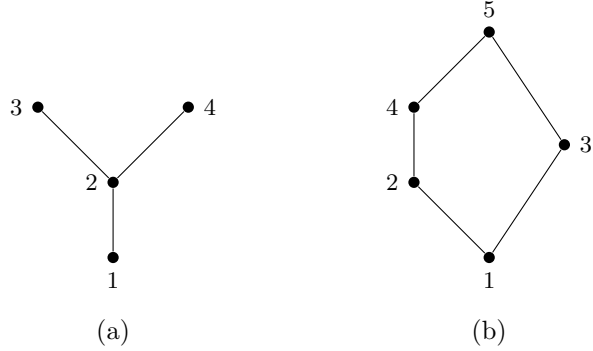
\end{ex}
 
A totally ordered subset of a poset $\mathcal{P}$ is called a \textbf{chain}. We call a chain $\mathcal{C}$ of $\mathcal{P}$ \textbf{maximal} if it is contained in no larger chains of $\mathcal{P}$, and we call $\mathcal{C}$ \textbf{saturated} if there does not exist $u\in\mathcal{P}\backslash \mathcal{C}$ and $s,t\in \mathcal{C}$ such that $s\prec u\prec t$ and $\mathcal{C}\cup\{u\}$ is a chain. Using the chains of a poset $\mathcal{P}$, one can define a simplicial complex $\Delta(\mathcal{P})$ associated with $\mathcal{P}$. Recall that a \textbf{(abstract) simplicial complex} $\Delta$ on a vertex set $V$, is a finite collection of subsets of $V$, called \textbf{faces}, such that $\tau\subseteq\sigma\in\Delta$ implies $\tau\in\Delta$. To define the simplicial complex $\Delta(\mathcal{P})$, we set $V=\mathcal{P}$ and $\Delta(\mathcal{P})=\{\text{chains of }\mathcal{P}\}$. 

\begin{ex}
The simplicial complexes associated with the posets $\mathcal{P}_1$ and $\mathcal{P}_2$ of Example~\ref{ex:poset} are illustarted in Figure~\ref{fig:simpex} \textup{(a)} and \textup{(b)}, respectively. Note that the vertices of the simplices are labelled by the corresponding elements of the posets.
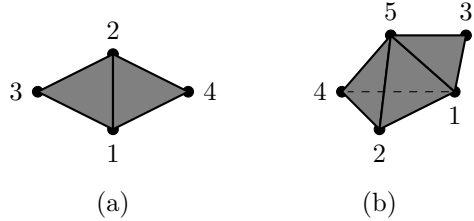
\begin{figure}[H]
    \centering
    $$\begin{tikzpicture}
	\node (1) at (0, 0) [circle, draw = black, fill = black, inner sep = 0.5mm, label=below:{1}]{};
	\node (2) at (-1, 0.5)[circle, draw = black, fill = black, inner sep = 0.5mm,label=left:{3}] {};
    \node (3) at (0, 1) [circle, draw = black, fill = black, inner sep = 0.5mm,label=above:{2}]{};
    \node (4) at (1, 0.5) [circle, draw = black, fill = black, inner sep = 0.5mm,label=right:{4}]{};
    \draw [fill=gray,thick] (0, 0)--(-1, 0.5)--(0, 1)--(0, 0);
    \draw [fill=gray,thick] (0, 0)--(1, 0.5)--(0, 1)--(0, 0);
    \node (5) at (0, -1) {(a)};
\end{tikzpicture}\quad\quad\quad\begin{tikzpicture}
	\node (2) at (0, 0) [circle, draw = black, fill = black, inner sep = 0.5mm, label=below:{2}]{};
	\node (1) at (1, 0.5)[circle, draw = black, fill = black, inner sep = 0.5mm,label=below:{1}] {};
    \node (3) at (1.15, 1.25) [circle, draw = black, fill = black, inner sep = 0.5mm,label=above:{3}]{};
    \node (4) at (-0.5, 0.5) [circle, draw = black, fill = black, inner sep = 0.5mm,label=left:{4}]{};
    \node (5) at (0.15, 1.25) [circle, draw = black, fill = black, inner sep = 0.5mm,label=above:{5}]{};
    \draw [fill=gray,thick] (-0.5, 0.5)--(0, 0)--(0.15, 1.25)--(-0.5, 0.5);
   \draw [fill=gray,thick] (1, 0.5)--(0, 0)--(0.15, 1.25)--(1, 0.5);
   \draw [fill=gray,thick] (1, 0.5)--(1.15, 1.25)--(0.15, 1.25)--(1, 0.5);
   \draw[dashed] (1)--(4);
    \node (6) at (0, -1) {(b)};
\end{tikzpicture}$$
    \caption{$\Delta(\mathcal{P})$}
    \label{fig:simpex}
\end{figure}
\end{ex}

\noindent
Given a simplicial complex $\Delta$, the dimension of a face $\sigma\in\Delta$ is defined as $\dim\sigma=|\sigma|-1$ and the dimension of $\Delta$ is defined by $\dim\Delta=\max_{\sigma\in \Delta}\dim\sigma$. If $\sigma\in\Delta$ satisfies $\dim\sigma=k$, then we refer to $\sigma$ as a \textbf{$\mathbf{k}$-face}; similarly, if $\dim\Delta=k$, we call $\Delta$ a \textbf{$\mathbf{k}$-complex}. Any face $\sigma\in\Delta$ generates a simplicial complex $\bar{\sigma}$ consisting of $\sigma$ and all of its subsets; simplicial complexes of this form are called \textbf{simplices}. A face $\sigma\in\Delta$ is called a \textbf{facet} if it is contained in no other face of $\Delta$. We say that $\Delta$ is \textbf{pure} if all facets of $\Delta$ have the same dimension. 

\begin{definition}
    A simplicial complex $\Delta$ is called \textbf{shellable} if its facets can be arranged into a total order $F_1,\hdots,F_t$ in such a way that the subcomplex $\left(\bigcup_{i=1}^{k-1}\bar{F}_i\right)\cap \bar{F}_k$ is pure and $(\dim F_k-1)$-dimensional for $2\le k\le t$. Such an ordering of facets is called a \textbf{shelling}.
\end{definition}

We call a poset $\mathcal{P}$ \textbf{shellable} (resp., \textbf{pure}) if $\Delta(\mathcal{P})$ is shellable (resp., pure). If a poset is finite, bounded, and pure, then we say that it is \textbf{graded}. As a consequence of the following result, one nice way to identify when a poset $\mathcal{P}$ is not shellable, i.e., when $\Delta(\mathcal{P})$ is not shellable, is by finding a non-shellable interval.


\begin{theorem}[Bj\"orner and Wachs \textbf{\cite{npshell2}}]\label{thm:shellinterval}
    Every interval of a shellable poset is shellable.
\end{theorem}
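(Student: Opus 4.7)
The plan is to reduce to the well-known fact that the link of any face in a shellable simplicial complex is shellable. Given a shellable poset $\mathcal{P}$ and an interval $[x,y]$ with $x\prec y$ (the case $x=y$ being trivial), the key structural observation is that $\Delta([x,y])$ sits inside $\Delta(\mathcal{P})$ in a way compatible with the link operation. Since every element of the open interval $(x,y):=\{z\in\mathcal{P}~|~x\prec z\prec y\}$ is comparable to both $x$ and $y$, one has the join decomposition $\Delta([x,y])=\{x\}*\Delta((x,y))*\{y\}$. Similarly, partitioning any chain $C\subseteq\mathcal{P}\setminus\{x,y\}$ with $C\cup\{x,y\}$ still a chain of $\mathcal{P}$ by whether its elements lie below $x$, strictly between $x$ and $y$, or above $y$ gives
$$\mathrm{lk}_{\Delta(\mathcal{P})}(\{x,y\})=\Delta(\mathcal{P}_{\prec x})*\Delta((x,y))*\Delta(\mathcal{P}_{\succ y}),$$
where $\mathcal{P}_{\prec x}=\{z\in\mathcal{P}~|~z\prec x\}$ and similarly for $\mathcal{P}_{\succ y}$.

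The key steps are then as follows. First, invoke the standard Bj\"orner--Wachs fact that the link of any face of a shellable complex is shellable to conclude that $\mathrm{lk}_{\Delta(\mathcal{P})}(\{x,y\})$ is shellable, via the ordering inherited from the given shelling of $\Delta(\mathcal{P})$. Second, apply the companion fact that each factor in a join decomposition of a shellable complex is itself shellable, extracting shellability of the middle factor $\Delta((x,y))$. Finally, coning with the new vertices $x$ and $y$ preserves shellability, since the join of shellable complexes is shellable, giving a shelling of $\Delta([x,y])=\{x\}*\Delta((x,y))*\{y\}$.

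The main obstacle is the link lemma feeding step one. For this I would argue directly: given a shelling $F_1,\ldots,F_t$ of $\Delta(\mathcal{P})$ and $\sigma=\{x,y\}$, list those $F_i$ containing $\sigma$ in their inherited order, say $F_{i_1},\ldots,F_{i_s}$, and propose $F_{i_1}\setminus\sigma,\ldots,F_{i_s}\setminus\sigma$ as a shelling of $\mathrm{lk}_{\Delta(\mathcal{P})}(\sigma)$. The verification rests on the bijection $\tau\leftrightarrow\tau\cup\sigma$ between faces of the link and faces of $\Delta(\mathcal{P})$ containing $\sigma$: for each $k$, a face $\tau$ of $F_{i_k}\setminus\sigma$ lies in some earlier $\overline{F_{i_j}\setminus\sigma}$ exactly when $\tau\cup\sigma$ lies in $\overline{F_{i_j}}$. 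This bijection transports the prefix-intersection purity/codimension-one condition from the ambient shelling down to the link. The technical content is just a careful accounting to confirm that this pullback preserves both purity and the codimension-one condition, which follows routinely from the definition once the correspondence is set up.
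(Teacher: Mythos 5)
The paper does not prove this statement; it is quoted verbatim from Bj\"orner and Wachs with a citation, so there is no in-paper argument to compare against. Your proof is essentially the standard one from the cited literature (links of faces of shellable complexes are shellable; a join is shellable iff its factors are; then read $\Delta([x,y])$ off as a double cone over the middle join-factor of $\mathrm{lk}_{\Delta(\mathcal{P})}(\{x,y\})$), and both of your structural identities --- $\Delta([x,y])=\{x\}*\Delta((x,y))*\{y\}$ and $\mathrm{lk}_{\Delta(\mathcal{P})}(\{x,y\})=\Delta(\mathcal{P}_{\prec x})*\Delta((x,y))*\Delta(\mathcal{P}_{\succ y})$ --- are correct, since any element of a chain through both $x$ and $y$ must lie below $x$, strictly between them, or above $y$. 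The extraction of the middle factor can, if you prefer, be folded back into the link lemma itself: the link of a facet of $\Delta_2$ inside $\Delta_1*\Delta_2$ is $\Delta_1$, so "factors of shellable joins are shellable" is a corollary of the link lemma rather than an independent input.

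The one place where "routine accounting" hides a real step is the dimension/nonemptiness of the prefix intersections in your proposed link shelling. Writing $\bigl(\bigcup_{i<i_k}\bar F_i\bigr)\cap\bar F_{i_k}$ as the complex generated by $\{F_{i_k}\setminus\{v\}~|~v\in A_k\}$ for some $A_k\subseteq F_{i_k}$, your bijection shows that $\bigl(\bigcup_{j<k}\overline{F_{i_j}\setminus\sigma}\bigr)\cap\overline{F_{i_k}\setminus\sigma}$ is generated by $\{(F_{i_k}\setminus\sigma)\setminus\{v\}~|~v\in A_k\setminus\sigma\}$, which is pure of the right codimension \emph{only if} $A_k\setminus\sigma\neq\emptyset$. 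This does hold, but needs the observation that $\sigma\subseteq F_{i_1}\cap F_{i_k}$ with $i_1<i_k$, so $\sigma$ itself lies in the ambient prefix intersection and hence $\sigma\subseteq F_{i_k}\setminus\{v\}$ for some $v\in A_k$, forcing $v\notin\sigma$. (One also uses, as you implicitly do, that any earlier facet containing $\tau\cup\sigma$ automatically contains $\sigma$ and is therefore one of the $F_{i_j}$.) With that point supplied, the argument is complete.
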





Extending the work of \textbf{\cite{Bjorner,BW1,BW2}}, the authors of \textbf{\cite{npshell,npshell2}} introduce a way of identifying if a bounded poset is shellable without referencing $\Delta(\mathcal{P})$. Given a poset $\mathcal{P}$, let $\mathcal{E}(\mathcal{P})=\{(x,y)\in\mathcal{P}\times\mathcal{P}~|~x\precdot y\}$, i.e., $\mathcal{E}(\mathcal{P})$ is the set of edges in the Hasse diagram of $\mathcal{P}$. An \textbf{edge labeling} of $\mathcal{P}$ is a map $\lambda:\mathcal{E}(\mathcal{P})\to\Lambda$, where $\Lambda$ is some poset. Given a saturated chain $\mathcal{C}=\{x_0\precdot x_1\precdot\cdots\precdot x_n\}$ of $\mathcal{P}$ with an edge labelling $\lambda$, we define the vector $$v(\mathcal{C},\lambda)=(\lambda(x_0,x_1),\lambda(x_1,x_2),\cdots, \lambda(x_{n-1},x_n))\in\mathbb{Z}^n$$ and call $\mathcal{C}$ \textbf{rising} if $\lambda(x_0,x_1)\le\lambda(x_1,x_2)\le\cdots\le \lambda(x_{n-1},x_n)$.

\begin{definition}\label{def:EL}
    A bounded poset is \textbf{EL-shellable} if it admits an edge labeling $\lambda:\mathcal{E}(\mathcal{P})\to \Lambda$ such that for every interval $[x,y]$ of $\mathcal{P}$
    \begin{enumerate}
        \item[\textup{1)}] there is a unique rising unrefinable chain $\mathcal{C}_{[x,y]}=\{x=x_0\precdot x_1\precdot\cdots\precdot x_n=y\}$ and
        \item[\textup{2)}] if $\widetilde{\mathcal{C}}$ is any other unrefinable chain between $x$ and $y$, then $v(\mathcal{C}_{[x,y]})$ is lexicographically less than $v\left(\widetilde{\mathcal{C}}\right)$.
       
    \end{enumerate}
\end{definition}


\begin{theorem}[Bj\"orner and Wachs \textbf{\cite{npshell}}]\label{thm:ELshell}
    Let $\mathcal{P}$ be bounded. If $\mathcal{P}$ is a EL-shellable poset, then $\mathcal{P}$ is shellable.
\end{theorem}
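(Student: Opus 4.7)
The plan is to use the EL-labeling $\lambda$ to construct an explicit shelling of $\Delta(\mathcal{P})$. Since $\mathcal{P}$ is bounded with minimum $\hat{0}$ and maximum $\hat{1}$, the facets of $\Delta(\mathcal{P})$ are precisely the saturated chains from $\hat{0}$ to $\hat{1}$; I would list them as $F_1,F_2,\ldots,F_t$ in lexicographic order of the label vectors $v(F_\ell,\lambda)$, breaking any remaining ties arbitrarily. Applying Definition~\ref{def:EL} to the interval $[\hat{0},\hat{1}]$, the first chain $F_1$ is forced to be the unique globally rising chain $\mathcal{C}_{[\hat{0},\hat{1}]}$.

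To confirm this is a shelling, for each $2\le k\le t$ and $1\le i<k$ I need to produce an index $j<k$ and an element $x\in F_k$ with $F_i\cap F_k\subseteq F_j\cap F_k=F_k\setminus\{x\}$. My strategy is to localize the discrepancy between $F_i$ and $F_k$ to a single rank-two sub-interval and perform a single-vertex swap dictated by the EL-property. Writing $F_k$ as $\hat{0}=x_0\precdot x_1\precdot\cdots\precdot x_n=\hat{1}$, I choose consecutive common elements $u,v\in F_i\cap F_k$ with $u\prec v$ so that the segment of $F_k$ lying in $[u,v]$ is not the unique rising chain $\mathcal{C}_{[u,v]}$; such a pair must exist because $v(F_i,\lambda)$ lex-precedes $v(F_k,\lambda)$ while rising chains are lex-minimal inside their intervals. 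The non-rising segment then exhibits a descent $\lambda(x_{r-1},x_r)>\lambda(x_r,x_{r+1})$ with $u\preceq x_{r-1}\precdot x_r\precdot x_{r+1}\preceq v$, forcing $u\prec x_r\prec v$ and hence $x_r\notin F_i$ by consecutiveness of $u$ and $v$.

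Passing to the sub-interval $[x_{r-1},x_{r+1}]$, I invoke its unique rising chain, which (granting the interval-purity claim explained below) takes the form $x_{r-1}\precdot x'_r\precdot x_{r+1}$ for some $x'_r\ne x_r$. Replacing $x_r$ by $x'_r$ inside $F_k$ yields a new saturated chain $F_j$ from $\hat{0}$ to $\hat{1}$ differing from $F_k$ in exactly the one vertex $x=x_r$, so $F_j\cap F_k=F_k\setminus\{x_r\}\supseteq F_i\cap F_k$. Lex-minimality of $\mathcal{C}_{[x_{r-1},x_{r+1}]}$ combined with the descent condition $\lambda(x_{r-1},x_r)>\lambda(x_r,x_{r+1})$ forces the initial label of the rising chain to be strictly below $\lambda(x_{r-1},x_r)$ (the alternative $\mu_1=\lambda(x_{r-1},x_r)$ with $\mu_1\le\mu_2\le\lambda(x_r,x_{r+1})$ contradicts the descent). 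Consequently $v(F_j,\lambda)$ agrees with $v(F_k,\lambda)$ before position $r$ and is strictly smaller at position $r$, so $j<k$ as required.

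The main technical obstacle is justifying that the swap is actually codimension-one, i.e., that every saturated chain in $[x_{r-1},x_{r+1}]$ has length exactly two, so that $\mathcal{C}_{[x_{r-1},x_{r+1}]}$ is simply a relabeling of the middle vertex. This reduces to establishing as a preliminary lemma that intervals of an EL-shellable poset are pure; I would argue this by induction on rank, observing that if an interval $[a,b]$ contained saturated chains of two distinct lengths then one could concatenate sub-rising chains to exhibit two rising unrefinable chains from $a$ to $b$, contradicting the uniqueness clause of Definition~\ref{def:EL}. A secondary subtlety, already addressed in the paragraph above, is producing a descent whose middle vertex avoids $F_i$; this comes for free from the consecutiveness of $u,v$, since every intermediate vertex of the $F_k$-segment on $[u,v]$ lies in $F_k\setminus F_i$. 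With interval-purity in hand the plan above yields the required shelling, and Theorem~\ref{thm:shellinterval}-style local arguments are not needed.
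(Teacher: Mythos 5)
Your overall strategy---totally order the maximal chains of the bounded poset lexicographically by label vector and verify the shelling condition via a local swap at a descent---is exactly the classical Bj\"orner--Wachs argument; note the paper does not prove Theorem~\ref{thm:ELshell} but imports it from \textbf{\cite{npshell}}, so you are reconstructing the cited proof. Several of your steps are sound: locating the first interval $[u,v]$ of divergence between $F_i$ and $F_k$, extracting a descent $\lambda(x_{r-1},x_r)>\lambda(x_r,x_{r+1})$ whose middle vertex $x_r$ avoids $F_i$, and replacing $x_r$ by the interior of the rising chain of $[x_{r-1},x_{r+1}]$.

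However, your ``preliminary lemma'' that every interval of an EL-shellable poset is pure is false, and this is a genuine gap. Definition~\ref{def:EL} does not entail gradedness: the paper's own poset $\mathcal{P}_2$ of Example~\ref{ex:poset}, with maximal chains $1\precdot 2\precdot 4\precdot 5$ and $1\precdot 3\precdot 5$, admits the EL-labeling $\lambda(1,2)=1$, $\lambda(2,4)=2$, $\lambda(4,5)=3$, $\lambda(1,3)=2$, $\lambda(3,5)=1$ (every interval has a unique rising unrefinable chain which is lexicographically least), yet $[1,5]$ has maximal chains of lengths $2$ and $3$. Non-purity is precisely the point of the nonpure Bj\"orner--Wachs framework this paper uses---Theorem~\ref{thm:kshell} treats purity as a condition \emph{separate} from EL-shellability. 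Your proposed induction also does not work: concatenating rising chains of subintervals need not yield a rising chain, since labels may descend at a junction. Without purity of $[x_{r-1},x_{r+1}]$ two things change. The harmless one: the replacement chain $F_j$ may have a different length than $F_k$, but one still gets $F_j\cap F_k=F_k\setminus\{x_r\}$, which is all the shelling condition needs. The fatal one: your claim that the rising chain's first label is strictly below $\lambda(x_{r-1},x_r)$ relies on that chain having length exactly two; if, say, $x_{r-1}\precdot x_{r+1}$ is itself a cover with $\lambda(x_{r-1},x_{r+1})=\lambda(x_{r-1},x_r)$, the contradiction you invoke disappears and $v(F_j)$ need not lexicographically precede $v(F_k)$. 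Repairing this requires the more careful word-order conventions and inductive setup of \textbf{\cite{npshell,npshell2}}, so the proof as proposed is incomplete.
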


Considering the definition, the following EL version of Theorem~\ref{thm:shellinterval} is immediate.

\begin{theorem}\label{thm:ELinterval}
Let $\mathcal{P}$ be bounded. Every interval of an EL-shellable poset is EL-shellable.
\end{theorem}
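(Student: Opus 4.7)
The plan is to show that the restriction of an EL-labeling of $\mathcal{P}$ to an interval furnishes an EL-labeling of that interval. First I would verify that any interval $[a,b]$ of $\mathcal{P}$ is itself bounded, with $a$ as its unique minimum and $b$ as its unique maximum, so it makes sense to ask whether $[a,b]$ is EL-shellable in the sense of Definition~\ref{def:EL}.

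The key structural observation, which I would establish next, is that intervals are convex: if $x,y\in [a,b]$ and $x\preceq z\preceq y$ in $\mathcal{P}$, then $a\preceq x\preceq z\preceq y\preceq b$ forces $z\in [a,b]$. From this I would draw two conclusions: (i) every covering relation $x\precdot y$ taking place in $[a,b]$ is also a covering relation of $\mathcal{P}$, and hence $\mathcal{E}([a,b])\subseteq \mathcal{E}(\mathcal{P})$; and (ii) for any $x\preceq y$ in $[a,b]$, the subinterval $[x,y]$ computed in $[a,b]$ coincides with the interval $[x,y]$ computed in $\mathcal{P}$.

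Letting $\lambda:\mathcal{E}(\mathcal{P})\to \Lambda$ be an EL-labeling of $\mathcal{P}$, I would then propose $\widetilde{\lambda}:=\lambda|_{\mathcal{E}([a,b])}$ as the candidate EL-labeling of $[a,b]$, noting that $\Lambda$ is already a poset, so we may use the same label poset. Fix an arbitrary subinterval $[x,y]$ of $[a,b]$. By (ii), the saturated chains from $x$ to $y$ inside $[a,b]$ are precisely the saturated chains from $x$ to $y$ inside $\mathcal{P}$, and by (i) their $\widetilde{\lambda}$-label vectors coincide with their $\lambda$-label vectors. Thus the unique rising chain in $[x,y]\subseteq \mathcal{P}$ remains the unique rising chain in $[x,y]\subseteq [a,b]$, and its lexicographic minimality carries over verbatim; both clauses of Definition~\ref{def:EL} are therefore inherited.

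Because the statement is essentially a direct consequence of the definition, I do not anticipate any real obstacle. The only point meriting care is the convexity argument, which guarantees that no covering edges are lost and no new saturated chains appear when passing from $\mathcal{P}$ to $[a,b]$; once that is in hand, the labeling and the two EL-conditions transfer transparently.
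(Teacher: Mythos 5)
Your proposal is correct and is exactly the argument the paper has in mind: the paper offers no written proof, simply declaring the result ``immediate'' from Definition~\ref{def:EL}, and your restriction-of-the-labeling argument via convexity of intervals is the standard way to make that immediacy precise. No gaps; the two EL-conditions are indeed stated interval-by-interval, so they transfer verbatim to any subinterval.
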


Having covered the necessary preliminaries of posets, we now move to defining our posets of interest.

\subsection{Kohnert posets}

As mentioned in the introduction, the underlying sets of Kohnert posets are certain collections of diagrams. Formally, a \textbf{diagram} is an array of finitely many cells in $\mathbb{N}\times\mathbb{N}$. An example diagram is illustrated in Figure~\ref{fig:diagram} below.

\begin{figure}[H]
    \centering
    $$\begin{tikzpicture}[scale=0.65]
  \node at (1.5, 2.5) {$\bigtimes$};
  \node at (0.5, 1.5) {$\bigtimes$};
  \node at (1.5, 1.5) {$\bigtimes$};
  \node at (2.5, 0.5) {$\bigtimes$};
  \draw (0,4)--(0,0)--(4,0);
  \draw (1,3)--(2,3)--(2,2)--(1,2)--(1,3);
  \draw (2,2)--(2,1)--(1,1)--(1,2)--(0,2);
  \draw (0,1)--(1,1);
  \draw (2,0)--(2,1)--(3,1)--(3,0);
\end{tikzpicture}$$
    \caption{Diagram}
    \label{fig:diagram}
\end{figure}

\noindent
We may also think of a diagram as the set of row/column coordinates of the cells defining it, where rows are labeled from bottom to top and columns from left to right. For example, if $D$ is the diagram of Figure~\ref{fig:diagram}, then $D=\{(1,3),(2,1),(2,2),(3,2)\}$. Consequently, if a diagram $D$ contains a cell in position $(r,c)$, then we write $(r,c)\in D$; otherwise, $(r,c)\notin D$.

\begin{remark}
    Ongoing, when illustrating diagrams with a particular form, it will prove helpful to decorate regions to indicate a particular structure. Other than describing the properties of regions in words \textup(usually in parentheses\textup), regions shaded gray represent empty regions containing no cells, and regions shaded with diagonal lines will represent regions that are arbitrary, i.e., the placement of cells can be arbitrary.
\end{remark}

Now, to any diagram $D$ we can apply what are called ``Kohnert moves" defined as follows. For $r>0$, applying a \textbf{Kohnert move} at row $r$ of $D$ results in the rightmost cell in row $r$ of $D$ moving to the first empty position below in the same column (if such a position exists), jumping over other cells as needed. If applying a Kohnert move at row $r>0$ of $D$ causes the cell in position $(r,c)\in D$ to move down to position $(r',c)$, forming the diagram $D'$, then we write $$D'=D\ldownarrow^{(r,c)}_{(r',c)}.$$ We let $KD(D)$ denote the set of all diagrams that can be obtained from $D$ by applying a sequence of Kohnert moves. For example, in Figure~\ref{fig:moves1} we illustrate the diagrams of $KD(D)$ for the diagram $D$ of Figure~\ref{fig:diagram}.

\begin{figure}[H]
$$\begin{tikzpicture}[scale=0.4]
    \draw[thick] (4,0)--(0,0)--(0,4);
        \draw (2,0)--(2,1)--(3,1)--(3,0);
        \draw (0,1)--(2,1)--(2,2)--(0,2);
        \draw (1,2)--(1,3)--(2,3)--(2,2);
        \draw (1,1)--(1,2);

        \node at (2.5,0.5){$\bigtimes$};
        \node at (0.5,1.5){$\bigtimes$};
        \node at (1.5,1.5){$\bigtimes$};
        \node at (1.5,2.5){$\bigtimes$};
\end{tikzpicture}\hspace{1cm}
\begin{tikzpicture}[scale=0.4]
    \draw[thick] (4,0)--(0,0)--(0,4);
        \draw (1,0)--(1,1)--(3,1)--(3,0);
        \draw (2,0)--(2,1);
        \draw (0,1)--(1,1)--(1,2)--(0,2);
        \draw (1,2)--(2,2)--(2,3)--(1,3)--(1,2);

        \node at (1.5,.5){$\bigtimes$};
        \node at (2.5,.5){$\bigtimes$};
        \node at (.5,1.5){$\bigtimes$};
        \node at (1.5,2.5){$\bigtimes$};
\end{tikzpicture}\hspace{1cm}
\begin{tikzpicture}[scale=0.4]
    \draw[thick] (4,0)--(0,0)--(0,4);
        \draw (0,1)--(3,1)--(3,0);
        \draw (1,1)--(1,0);
        \draw (2,1)--(2,0);
        \draw (1,2)--(2,2)--(2,3)--(1,3)--(1,2);

        \node at (.5,.5){$\bigtimes$};
        \node at (1.5,.5){$\bigtimes$};
        \node at (2.5,.5){$\bigtimes$};
        \node at (1.5,2.5){$\bigtimes$};
\end{tikzpicture}\hspace{1cm}
\begin{tikzpicture}[scale=0.4]
    \draw[thick] (4,0)--(0,0)--(0,4);
        \draw (1,0)--(1,1)--(3,1)--(3,0);
        \draw (0,1)--(1,1);
        \draw (2,0)--(2,1);
        \draw (0,2)--(2,2)--(2,1);
        \draw (1,1)--(1,2);

        \node at (1.5,.5){$\bigtimes$};
        \node at (2.5,.5){$\bigtimes$};
        \node at (.5,1.5){$\bigtimes$};
        \node at (1.5,1.5){$\bigtimes$};
\end{tikzpicture}\hspace{1cm}
\begin{tikzpicture}[scale=0.4]
    \draw[thick] (4,0)--(0,0)--(0,4);
        \draw (0,1)--(3,1)--(3,0);
        \draw (1,1)--(1,0);
        \draw (2,1)--(2,0);
        \draw (1,1)--(1,2)--(2,2)--(2,1);

        \node at (.5,.5){$\bigtimes$};
        \node at (1.5,.5){$\bigtimes$};
        \node at (2.5,.5){$\bigtimes$};
        \node at (1.5,1.5){$\bigtimes$};
\end{tikzpicture}$$
\caption{$KD(D).$}
\label{fig:moves1}
\end{figure}

\noindent
The sets $KD(D)$ form the underlying sets of Kohnert posets.

Given a diagram $D$, the authors of \textbf{\cite{KP2}} define an ordering on the elements of $KD(D)$ as follows (see also \textbf{\cite{KP3}}). For $D_1,D_2\in KD(D)$, we say $D_2\prec D_1$ if $D_2$ can be obtained from $D_1$ by applying some sequence of Kohnert moves. For a diagram $D$, we denote the corresponding poset on $KD(D)$ by $\mathcal{P}(D)$ and refer to it as the \textbf{Kohnert poset} associated to $D$. 

 
In the sections that follow, we study $\mathcal{P}(D)$ for various collections of diagrams $D$ with our main concern being finding restrictions under which $\mathcal{P}(D)$ is shellable. One important family of diagrams considered below is the family of ``key" diagrams.

A diagram $D$ whose cells are left-justified is called a \textbf{key diagram}. Note that key diagrams are uniquely identified by the weak compositions corresponding to the sequences enumerating the number of cells in each row. Consequently, we denote a key diagram by $\mathbb{D}(\mathbf{a}),$ where $\mathbf{a}$ is the aforementioned weak composition. For example, the diagram in Figure~\ref{fig:swex} is the key diagram $\mathbb{D}(1,0,3,1,2).$

\begin{figure}[H]
    $$\begin{tikzpicture}[scale=0.5]
    \draw[thick] (6,0)--(0,0)--(0,6);
        \draw (0,1)--(1,1)--(1,0);
        \draw (0,2)--(3,2)--(3,3)--(0,3);
        \draw (1,2)--(1,3);
        \draw (2,2)--(2,3);
        \draw (0,4)--(1,4)--(1,3);
        \draw (0,5)--(2,5)--(2,4)--(1,4)--(1,5);

        \node at (.5,.5){$\bigtimes$};
        \node at (.5,2.5){$\bigtimes$};
        \node at (1.5,2.5){$\bigtimes$};
        \node at (2.5,2.5){$\bigtimes$};
        \node at (.5,3.5){$\bigtimes$};
        \node at (.5,4.5){$\bigtimes$};
        \node at (1.5,4.5){$\bigtimes$};
    \end{tikzpicture}$$
    \caption{Key diagram}
    \label{fig:swex}
\end{figure}
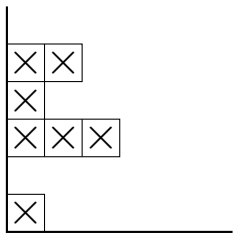


Now, since in certain cases we will see that shellability of Kohnert posets has polynomial consequences, we briefly recall the definition of ``Kohnert polynomial". Given a diagram $D$, setting $$wt(D)=\prod_{i\ge 0}x_i^{\#\text{cells in row}~i~\text{of D}},$$ we define the \textbf{Kohnert polynomial} associated to $D$ as $\mathfrak{K}_D=\displaystyle\sum_{T\in KD(D)}wt(T)$.

\section{Structural Results}\label{sec:strucres}

In this section, we identify two necessary conditions for a diagram to generate a shellable Kohnert poset. Both conditions are in terms of avoiding certain subdiagrams that generate non-shellable intervals in the associated poset. Specifically, given a diagram that does not avoid one of the aforementioned subdiagrams, we are able to identify an interval that is isomorphic to a poset of the form described in Lemma~\ref{lem:nonshellp} below.

\begin{lemma}\label{lem:nonshellp}
Let $n_1,n_2>1$. If $\mathcal{P}=\{p_i^1\}_{i=1}^{n_1}\cup\{p_i^2\}_{i=1}^{n_2}\cup\{\hat{0},\hat{1}\}$ with 
\begin{itemize}
    \item $\hat{0}\prec p_1^1,p^2_1$,
    \item $p_i^1\prec p_{i+1}^1$ for $1\le i<n_1$,
    \item $p_i^2\prec p_{i+1}^2$ for $1\le i<n_2$, and
    \item $p_{n_1}^1,p_{n_2}^2\prec\hat{1}$,
\end{itemize}
then $\mathcal{P}$ is not shellable.
\end{lemma}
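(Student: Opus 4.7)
The plan is to prove directly from the definition of shellability that $\Delta(\mathcal{P})$ admits no shelling. The first step is to identify the facets. From the covering relations listed, the Hasse diagram of $\mathcal{P}$ consists of two disjoint paths from $\hat{0}$ to $\hat{1}$, namely
$$\hat{0} \precdot p_1^1 \precdot p_2^1 \precdot \cdots \precdot p_{n_1}^1 \precdot \hat{1} \qquad \text{and} \qquad \hat{0} \precdot p_1^2 \precdot p_2^2 \precdot \cdots \precdot p_{n_2}^2 \precdot \hat{1}.$$
Hence $\mathcal{P}$ has exactly two maximal chains,
$$F_1 = \{\hat{0}, p_1^1, \ldots, p_{n_1}^1, \hat{1}\}, \qquad F_2 = \{\hat{0}, p_1^2, \ldots, p_{n_2}^2, \hat{1}\},$$
which are the facets of $\Delta(\mathcal{P})$, of dimensions $n_1 + 1$ and $n_2 + 1$ respectively.

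Next, I would compute the intersection $\bar{F}_1 \cap \bar{F}_2$. Because the two chains share only the elements $\hat{0}$ and $\hat{1}$, we have $F_1 \cap F_2 = \{\hat{0},\hat{1}\}$, so every face contained in both $\bar{F}_1$ and $\bar{F}_2$ is a subset of $\{\hat{0},\hat{1}\}$. Thus $\bar{F}_1 \cap \bar{F}_2 = \overline{\{\hat{0},\hat{1}\}}$ is a single $1$-simplex, and in particular has dimension exactly $1$.

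Finally, to rule out shellability I would observe that with only two facets, any candidate shelling is of the form $F_{i_1}, F_{i_2}$, and Definition of shellability demands that $\bar{F}_{i_1} \cap \bar{F}_{i_2}$ be pure of dimension $\dim F_{i_2} - 1$. By the previous step this intersection has dimension $1$, while $\dim F_{i_2} - 1 = n_{i_2} \geq 2$ since $n_1, n_2 > 1$. This contradiction shows that no shelling exists, so $\mathcal{P}$ is not shellable.

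There is no substantial obstacle here: the argument is essentially an unwinding of definitions once one sees that the key feature of $\mathcal{P}$ is that it possesses precisely two maximal chains which meet only at their endpoints. The only point requiring any care is the verification that no additional covering relations are forced by the listed ones, but this is immediate since the two chains are order-incomparable except through $\hat{0}$ and $\hat{1}$.
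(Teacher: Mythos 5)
Your proposal is correct and follows essentially the same route as the paper: identify the two maximal chains as the only facets, note that their closures intersect in the single $1$-simplex on $\{\hat{0},\hat{1}\}$, and conclude that the dimension condition $\dim(\bar{F}_{i_1}\cap\bar{F}_{i_2})=\dim F_{i_2}-1=n_{i_2}\ge 2$ fails. No gaps.
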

\begin{proof}
Note that $\Delta(\mathcal{P})$ contains exactly two facets $$F_1=\{\hat{0},p_1^1,\hdots,p_{n_1}^1,\hat{1}\}$$ and $$F_2=\{\hat{0},p_1^2,\hdots,p_{n_2}^2,\hat{1}\}.$$ Since $$\dim F_1-1=n_1\neq \dim \bar{F}_1\cap \bar{F}_2=1\neq n_2=\dim F_2-1,$$ by definition $\Delta(\mathcal{P})$ is not shellable.
\end{proof}

\begin{ex}\label{ex:hex}
Consider the poset $\mathcal{P}=\{1,2,3,4,5,6\}$ with $1\prec 2\prec 4\prec 6$ and $1\prec 3\prec 5\prec 6.$ The Hasse diagram and simplicial complex of $\mathcal{P}$ are illustrated in Figure~\ref{fig:hex} \textup(a\textup) and \textup(b\textup), respectively. It follows from Lemma~\ref{lem:nonshellp} \textup(with $n_1=n_2=2$\textup) that $\mathcal{P}$ is not shellable.

\begin{figure}[H]
$$\begin{tikzpicture}
    \node (1) at (0, 0) [circle, draw = black, fill = black, inner sep = 0.5mm, label=below:{1}]{};
    \node (2) at (-1, 1) [circle, draw = black, fill = black, inner sep = 0.5mm, label=left:{2}]{};
    \node (3) at (1, 1) [circle, draw = black, fill = black, inner sep = 0.5mm, label=right:{3}]{};
    \node (4) at (-1, 2) [circle, draw = black, fill = black, inner sep = 0.5mm, label=left:{4}]{};
    \node (5) at (1, 2) [circle, draw = black, fill = black, inner sep = 0.5mm, label=right:{5}]{};
    \node (6) at (0, 3) [circle, draw = black, fill = black, inner sep = 0.5mm, label=above:{6}]{};

    \draw (1)--(2)--(4)--(6)--(5)--(3)--(1);

    \node at (0,-1){(a)};
\end{tikzpicture}\hspace{1in} \begin{tikzpicture}[scale=2]
	\node (1) at (0, 0) [circle, draw = black, fill = black, inner sep = 0.5mm, label=below:{1}]{};
	\node (6) at (0,1)[circle, draw = black, fill = black, inner sep = 0.5mm, label=above:{6}] {};
    \node (5) at (1,0) [circle, draw = black, fill = black, inner sep = 0.5mm, label=right:{5}]{};
    \node (3) at (0.75,-0.4) [circle, draw = black, fill = black, inner sep = 0.5mm, label=below:{3}]{};
    \node (4) at (-1,0) [circle, draw = black, fill = black, inner sep = 0.5mm, label=left:{4}]{};
    \node (2) at (-0.75,-0.4) [circle, draw=black, fill=black, inner sep=0.5mm, label=below:{2}]{};

    \draw [fill=gray, thick] (0,0)--(0.75,-0.4)--(0,1)--(0,0);
    \draw [fill=gray, thick] (0,0)--(-0.75,-0.4)--(0,1)--(0,0);
    \draw [fill=gray, thick] (0.75,-0.4)--(1,0)--(0,1)--(0.75,-0.4);
    \draw [fill=gray, thick] (-0.75,-0.4)--(-1,0)--(0,1)--(-0.75,-0.4);
    \draw [thick, dashed] (4)--(5);

    \node at (0,-1){(b)};
\end{tikzpicture}$$
\caption{(a) Hasse diagram of $\mathcal{P}$ and (b) $\Delta(\mathcal{P})$}
\label{fig:hex}
\end{figure}
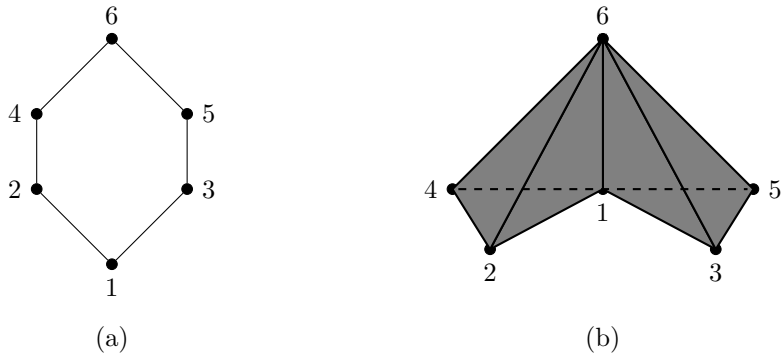
\end{ex}

Now, to aid in identifying the non-shellable intervals in the Kohnert posets of Propositions~\ref{prop:strucasc} and~\ref{prop:strucblock} below, we require the following lemma.

\begin{lemma}\label{lem:strucprophelp}
    Let $D$ be a diagram and $D_1,D_2\in \mathcal{P}(D)$ satisfy $D_2\prec D_1$. If there exist $0<c_1<c_2<\cdots<c_n$ and $0<r^i_1<r^i_2$ for $1\le i\le n$ such that $$S=D_1\backslash\bigcup_{i=1}^n\{(\tilde{r},c_i)~|~r^i_1\le \tilde{r}\le r^i_2\}=D_2\backslash\bigcup_{i=1}^n\{(\tilde{r},c_i)~|~r^i_1\le \tilde{r}\le r^i_2\},$$ then $$\tilde{D}\backslash\bigcup_{i=1}^n\{(\tilde{r},c_i)~|~r^i_1\le \tilde{r}\le r^i_2\}=S$$ for all $\tilde{D}\in [D_2,D_1]$.
\end{lemma}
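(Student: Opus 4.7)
The plan is to show that, along any chain of Kohnert moves from $D_1$ down to $D_2$, every single move is supported entirely inside one of the rectangles $R_i := \{(\tilde r, c_i) \mid r_1^i \le \tilde r \le r_2^i\}$; from this the conclusion is immediate, because cells outside $\bigcup_i R_i$ are never touched. Throughout I will rely on two structural features of Kohnert moves: each move alters exactly one cell, and that cell is moved to a strictly smaller row while remaining in the same column. Fix a chain $D_1 = E_0 \succ E_1 \succ \cdots \succ E_\ell = D_2$ passing through $\tilde D$.

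First I would handle any column $c \notin \{c_1,\ldots,c_n\}$. Let $\sigma_c(E)$ be the sum of row indices of the cells of $E$ in column $c$. A Kohnert move supported in column $c$ strictly decreases $\sigma_c$, while every other move leaves it unchanged. Since the hypothesis gives $\sigma_c(D_1)=\sigma_c(D_2)$ and $\sigma_c$ is monotone along the chain, it must be constant; hence no move takes place in column $c$, and $\tilde D$ agrees with $D_1$ on column $c$.

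Next I would fix a column $c_i$ and show that $\tilde D$ also agrees with $D_1$ on rows outside $[r_1^i, r_2^i]$. Let $A(E)$ count the cells of $E$ in column $c_i$ at rows $> r_2^i$, and $B(E)$ count those at rows $< r_1^i$. Since cells never move upward, $A$ is non-increasing and $B$ is non-decreasing along the chain. The hypothesis forces $A(D_1)=A(D_2)$ and $B(D_1)=B(D_2)$, so both are constant, meaning no move in the chain crosses the boundary rows $r_2^i$ or $r_1^i$ within column $c_i$. Now let $\sigma^+(E)$ (respectively $\sigma^-(E)$) denote the sum of row indices of cells in column $c_i$ at rows $> r_2^i$ (respectively $< r_1^i$). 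With no crossings, any move within one of these two regions stays inside it and strictly decreases the corresponding sum, while all other moves leave the sum alone. The endpoint equality $\sigma^{\pm}(D_1)=\sigma^{\pm}(D_2)$ forces each of $\sigma^+$ and $\sigma^-$ to be constant, and hence no move occurs in either region. Every Kohnert move in the chain therefore takes place in column $c_i$ at some row in $[r_1^i,r_2^i]$, i.e., entirely inside some $R_i$.

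Combining these observations, no cell outside $\bigcup_{i=1}^n R_i$ is ever altered, so $\tilde D$ agrees with $D_1$ on the complement of $\bigcup_i R_i$, which gives $\tilde D \setminus \bigcup_{i=1}^n R_i = S$. The main obstacle, as I see it, is the bookkeeping across the three types of regions (columns outside $\{c_i\}$, the upper strip of column $c_i$, and the lower strip of column $c_i$): in each case one must identify an appropriate monotone quantity whose values at $D_1$ and $D_2$ coincide, and then use monotonicity plus endpoint equality to rule out all local motion. Once these three parallel arguments are in place, the lemma drops out with no further work.
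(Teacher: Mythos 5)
Your proof is correct. The paper establishes the lemma by contradiction: it assumes some $D^*\in[D_2,D_1]$ satisfies $D^*\setminus R\neq S$, locates a position $(r^*,c^*)\notin R$ at which $D^*$ and $S$ disagree, and compares the count of cells of column $c^*$ strictly above (resp.\ strictly below) row $r^*$ in $D_1$, $D_2$, and $D^*$; monotonicity of this count under Kohnert moves, together with its equality on $D_1$ and $D_2$, forces $D_2\not\preceq D^*$. You instead argue directly along a saturated chain through $\tilde D$ and deploy four families of monotone potentials: the cell counts $A$ and $B$ (the same kind of statistic the paper uses, with thresholds $r_2^i$ and $r_1^i$) to rule out boundary crossings, and the row sums $\sigma_c$, $\sigma^{\pm}$ to rule out motion occurring entirely within a region where the counts alone would not detect it --- the role played in the paper by letting the threshold $r^*$ float to the offending cell. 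The underlying mechanism is identical (Kohnert moves only push cells downward within their column, so suitable column statistics are monotone, and equality at the two endpoints freezes everything in between), but your route is direct rather than by contradiction and yields the marginally stronger fact that no Kohnert move in any chain from $D_1$ to $D_2$ ever touches a cell outside $\bigcup_i R_i$; the paper's version is shorter because it compares only the three sets $D_1$, $D_2$, $D^*$ and never needs to fix a chain.
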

\begin{proof}
    Let $R=\bigcup_{i=1}^n\{(\tilde{r},c_i)~|~r^i_1\le \tilde{r}\le r^i_2\}$. Assume for a contradiction that there exists $D^*\in [D_2,D_1]$ such that $D^*\backslash R\neq S$. Then there are two cases.
    \bigskip

    \noindent
    \textbf{Case 1:} There exists $c^*,r^*>0$ such that $(r^*,c^*)\in D^*\backslash R$ and $(r^*,c^*)\notin S$. Since $D^*\preceq D_1$, $D_1\backslash R=S,$ and nontrivial Kohnert moves result in cells moving to lower rows, it must be the case that $$|\{(\tilde{r},c^*)\in D_1~|~\tilde{r}>r^*\}|>|\{(\tilde{r},c^*)\in D^*~|~\tilde{r}>r^*\}|.$$ Moreover, since $D_2\prec D_1$, $D_1\backslash R=D_2\backslash R$, and $(r^*,c^*)\notin R$, it follows that $$|\{(\tilde{r},c^*)\in D_2~|~\tilde{r}>r^*\}|=|\{(\tilde{r},c^*)\in D_1~|~\tilde{r}>r^*\}|>|\{(\tilde{r},c^*)\in D^*~|~\tilde{r}>r^*\}|,$$ i.e., there are more cells strictly above row $r^*$ in column $c^*$ of $D_2$ than in $D^*$. As the number of cells in a given column above a given row can only decrease upon applying Kohnert moves, it follows that $D_2\not\preceq D^*$, a contradiction.
    \bigskip

    \noindent
    \textbf{Case 2:} There exists $c^*,r^*>0$ such that $(r^*,c^*)\notin D^*\backslash R$ and $(r^*,c^*)\in S$. Since $D^*\preceq D_1$, $D_1\backslash R=S,$ and nontrivial Kohnert moves result in cells moving to lower rows, it must be the case that $$|\{(\tilde{r},c^*)\in D_1~|~\tilde{r}< r^*\}|<|\{(\tilde{r},c^*)\in D^*~|~\tilde{r}<r^*\}|.$$ As in Case 1, we may conclude further that $$|\{(\tilde{r},c^*)\in D_2~|~\tilde{r}< r^*\}|=|\{(\tilde{r},c^*)\in D_1~|~\tilde{r}< r^*\}|<|\{(\tilde{r},c^*)\in D^*~|~\tilde{r}<r^*\}|,$$ i.e., there are less cells strictly below row $r^*$ in column $c^*$ of $D_2$ than in $D^*$. As the number of cells in a given column below a given row can only increase upon applying Kohnert moves, it follows that $D_2\not\preceq D^*$, a contradiction.
\end{proof}

\begin{remark}
    Lemma~\ref{lem:strucprophelp} is a slight strengthening of Lemma 3.3 in \textup{\textbf{\cite{KPoset1}}}.
\end{remark}

\begin{corollary}\label{cor:cover}
    Let $D$ be a diagram. Suppose that for $D_1,D_2\in \mathcal{P}(D)$ there exists $1\le j<r$ and $c>0$ such that $D_2\prec D_1$ and $D_2=D_1\ldownarrow^{(r,c)}_{(r-j,c)}$. Then $D_2\precdot D_1$ if and only if for each $\tilde{r}$ satisfying $r-j<\tilde{r}<r$ there exists $\tilde{c}>c$ such that $(\tilde{r},\tilde{c})\in D_1$.
\end{corollary}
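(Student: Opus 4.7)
The plan is to prove both implications of the equivalence after using Lemma~\ref{lem:strucprophelp} to pin down the shape of any $\tilde{D}\in[D_2,D_1]$. Applying Lemma~\ref{lem:strucprophelp} with $n=1$, $c_1=c$, $r_1^1=r-j$, and $r_2^1=r$, every $\tilde{D}\in[D_2,D_1]$ must agree with $D_1$ outside column $c$ on rows $r-j,\ldots,r$. Since Kohnert moves preserve the total number of cells in each column, $\tilde{D}$ has exactly $j$ cells in this $(j+1)$-position region, so it has a single vacancy at some $(r',c)$ with $r-j\le r'\le r$. The cases $r'=r$ and $r'=r-j$ recover $D_1$ and $D_2$, so a proper intermediate has $r-j<r'<r$ and the explicit form $\tilde{D}=D_1\setminus\{(r',c)\}\cup\{(r-j,c)\}$.

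For $(\Rightarrow)$, I would argue the contrapositive. Suppose the condition fails at some $r-j<r'<r$, meaning no $\tilde{c}>c$ satisfies $(r',\tilde{c})\in D_1$. Since $(r-j+1,c),\ldots,(r-1,c)\in D_1$, we have $(r',c)\in D_1$ and it is rightmost in row $r'$ of $D_1$, so a Kohnert move at row $r'$ sends $(r',c)$ to the first empty column-$c$ position below, which is $(r-j,c)$, producing $\tilde{D}=D_1\setminus\{(r',c)\}\cup\{(r-j,c)\}$. From $\tilde{D}$, a Kohnert move at row $r$ slides the (still-rightmost) cell $(r,c)$ down into the new gap at $(r',c)$, producing $D_2$. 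Thus $\tilde{D}$ is a strict intermediate in $[D_2,D_1]$ and $D_2\not\precdot D_1$.

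For $(\Leftarrow)$, I would assume the condition holds and, for contradiction, that a proper intermediate $\tilde{D}=D_1\setminus\{(r',c)\}\cup\{(r-j,c)\}$ exists. The hypothesis produces some $\tilde{c}>c$ with $(r',\tilde{c})\in D_1$, and by Lemma~\ref{lem:strucprophelp} also $(r',\tilde{c})\in\tilde{D}$. The key tool is the invariant $N(D)=|\{a\le r':(a,\tilde{c})\in D\}|$, which is nondecreasing along Kohnert moves: moves in columns other than $\tilde{c}$ do not affect $N$, and a move in column $\tilde{c}$ slides one cell strictly downward, either staying on one side of row $r'$ (no change) or transporting the cell from a row $>r'$ down to a row $\le r'$ (increase by $1$). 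Since $D_1$ and $\tilde{D}$ agree on column $\tilde{c}$, $N(D_1)=N(\tilde{D})$, forcing $N$ to be constant along any Kohnert path from $D_1$ to $\tilde{D}$, so no move in that path ever crosses row $r'$ in column $\tilde{c}$. Hence no cell can land at $(r',\tilde{c})$ after it is vacated, and the cell at $(r',\tilde{c})\in D_1$ must remain throughout the path. This forces $(r',c)$ never to be rightmost in row $r'$, so no Kohnert move in the path can remove the cell at $(r',c)$, and consequently $(r',c)\in\tilde{D}$, contradicting the gap at $(r',c)$.

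The main obstacle is the $(\Leftarrow)$ direction: although Lemma~\ref{lem:strucprophelp} restricts the possible intermediate to a one-parameter family, ruling out any Kohnert path realizing such an intermediate requires the monotone invariant $N$ and the irreversibility it enforces on column $\tilde{c}$, which traps the cell at $(r',c)$ in place.
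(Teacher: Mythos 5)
Your proposal is correct, and its forward direction is essentially the paper's argument: when some intermediate row $r'$ has no cell to the right of column $c$, a Kohnert move at row $r'$ drops $(r',c)$ to $(r-j,c)$, and a subsequent move at row $r$ fills the gap at $(r',c)$, exhibiting a strict intermediate. The backward direction, however, is genuinely different. The paper never classifies the candidate intermediates: it takes any $D_{1.5}\in[D_2,D_1]$ with $D_{1.5}\precdot D_1$, notes it arises from a single Kohnert move at some row $\widehat{r}$, and uses Lemma~\ref{lem:strucprophelp} (the move may only disturb positions in $R=\{(\tilde{r},c):r-j\le\tilde{r}\le r\}$) together with the hypothesis (rows strictly between $r-j$ and $r$ have a cell to the right of column $c$, so their rightmost cell is not in column $c$) to force $\widehat{r}=r$, whence $D_{1.5}=D_2$. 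You instead first pin down every possible intermediate as $D_1\setminus\{(r',c)\}\cup\{(r-j,c)\}$ via column-count preservation, and then rule each one out with a monotone invariant on column $\tilde{c}$ (the count of cells weakly below row $r'$ can only increase under Kohnert moves, so its constancy traps the cell at $(r',\tilde{c})$, which in turn prevents $(r',c)$ from ever being rightmost in its row). Your invariant is exactly the observation already embedded in the proof of Lemma~\ref{lem:strucprophelp}, so nothing new is needed, but the paper's route is shorter because it only has to control the \emph{first} move out of $D_1$ rather than an entire Kohnert path down to a hypothetical intermediate; your route has the mild virtue of explicitly describing all diagrams that could sit in the open interval. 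One small polish: constancy of $N$ by itself does not forbid the cell at $(r',\tilde{c})$ from descending (that move leaves $N$ unchanged); your argument works because a departure would have to be repaired by a cell crossing down from above row $r'$, which constancy does forbid — using the count of cells \emph{strictly} below row $r'$ makes this one step instead of two.
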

\begin{proof}
    First, assume that $D_2\precdot D_1$. Then $D_2$ must be formed from $D_1$ by applying a single Kohnert move at row $r$ so that $(\tilde{r},c)\in D_1$ for $r-j+1\le \tilde{r}\le r$. Now, if there exists $\widehat{r}$ satisfying $r-j<\widehat{r}<r$ and $(\widehat{r},\tilde{c})\notin D_1$ for all $\tilde{c}>c$, then applying a single Kohnert move at row $\widehat{r}$ of $D_1$ results in $D_{1.5}=D_1\ldownarrow^{(\widehat{r},c)}_{(r-j,c)}\in \mathcal{P}(D)$. Moreover, applying a Kohnert move at row $r$ of $D_{1.5}$ results in $D_{1.5}\ldownarrow^{(r,c)}_{(\widehat{r},c)}=D_2$, i.e., $D_2\prec D_{1.5}\prec D_1$, contradicting our assumption that $D_2\precdot D_1$. Thus, for each $\tilde{r}$ satisfying $r-j<\tilde{r}<r$, there exists $\tilde{c}>c$ such that $(\tilde{r},\tilde{c})\in D_1$.
    
    Now, assume that, for each $\tilde{r}$ satisfying $r-j<\tilde{r}<r$, there exists $\tilde{c}>c$ such that $(\tilde{r},\tilde{c})\in D_1$. Take $D_{1.5}\in [D_2,D_1]$ such that $D_{1.5}\precdot D_1$. Assume that $D_{1.5}$ can be formed from $D_1$ by applying a Kohnert move at row $\widehat{r}$. Note that $D_1$ and $D_2$ differ only in positions $(r,c)$ and $(r-j,c)$. Thus, if $R=\{(\tilde{r},c)~|~r-j\le \tilde{r}\le r\}$, then $D_1\backslash R=D_2\backslash R=D_{1.5}\backslash R$ by Lemma~\ref{lem:strucprophelp}. Since, for each $\tilde{r}$ satisfying $r-j<\tilde{r}<r$, there exists $\tilde{c}>c$ such that $(\tilde{r},\tilde{c})\in D_1$, it follows that $\widehat{r}=r$ or $r-j$; but applying a Kohnert move at row $r$ of $D_1$ results in $D_2$, while applying a Kohnert move at row $r-j$ either does nothing or affects a cell in a column $\tilde{c}<c$, contradicting $D_1\backslash R=D_{1.5}\backslash R$. Therefore, we may conclude that $D_{1.5}=D_2$, i.e., $D_2\precdot D_1$. The result follows. 
\end{proof}


With Lemma~\ref{lem:strucprophelp} in hand, we can now prove Propositions~\ref{prop:strucasc} and~\ref{prop:strucblock} which provide necessary conditions for a diagram to generate a shellable Kohnert poset.

\begin{prop}\label{prop:strucasc}
    Let $D$ be a diagram. Suppose that there exists $D^*\in \mathcal{P}(D)$ and $r,c_1,c_2\in\mathbb{N}$ such that 
    \begin{itemize}
        \item[\textup{(i)}] $c_1<c_2$,
        \item[\textup{(ii)}] $(r+1,c_1),(r+2,c_2)\in D^*$,
        \item[\textup{(iii)}] $(r+2,\tilde{c}),(r,c_2)\notin D^*$ for $\tilde{c}>c_2$, and
        \item[\textup{(iv)}] $(r+1,\tilde{c}),(r,c_1)\notin D^*$ for $\tilde{c}>c_1$.
    \end{itemize}
    Then $\mathcal{P}(D)$ is not shellable. \textup(See Figure~\ref{fig:shelllem1} for an illustration of $D^*$.\textup)
\end{prop}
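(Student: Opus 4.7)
The plan is to produce, inside $\mathcal{P}(D)$, an interval isomorphic to the six-element poset of Lemma~\ref{lem:nonshellp} with $n_1 = n_2 = 2$; non-shellability of $\mathcal{P}(D)$ will then follow from that lemma combined with Theorem~\ref{thm:shellinterval}.

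Using the two movable cells $(r+1,c_1)$ and $(r+2,c_2)$ of $D^*$, the plan is to build four intermediate diagrams and a bottom diagram as follows. Moving $(r+2,c_2)$ down one row yields $D_1 = D^* \ldownarrow^{(r+2,c_2)}_{(r+1,c_2)}$, and subsequently moving what is now $(r+1,c_2)$ down one more row gives $D_{11} = D_1 \ldownarrow^{(r+1,c_2)}_{(r,c_2)}$. Symmetrically, set $D_2 = D^* \ldownarrow^{(r+1,c_1)}_{(r,c_1)}$ and $D_{22} = D_2 \ldownarrow^{(r+2,c_2)}_{(r+1,c_2)}$. Both chains can then be extended to a common minimum $D_{\hat{0}}$, obtained from $D^*$ by replacing the cells $(r+1,c_1)$ and $(r+2,c_2)$ with $(r,c_1)$ and $(r,c_2)$. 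Conditions (ii)--(iv) ensure that at each step the cell to be moved is the rightmost in its row and that the target row is the first empty position below, so every move is legitimate; the same conditions allow an application of Corollary~\ref{cor:cover} (with $j = 1$ each time) to conclude that each of the resulting six Kohnert moves realizes a covering relation in $\mathcal{P}(D)$.

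Next, I would verify that $[D_{\hat{0}},D^*]$ contains exactly these six diagrams. Apply Lemma~\ref{lem:strucprophelp} with $n=2$, columns $c_1 < c_2$, and row windows $r_1^1 = r$, $r_2^1 = r+1$ in column $c_1$ and $r_1^2 = r$, $r_2^2 = r+2$ in column $c_2$; writing $R$ for this region, conditions (iii) and (iv) give $D^* \setminus R = D_{\hat{0}} \setminus R$. The lemma then forces every $\tilde{D} \in [D_{\hat{0}},D^*]$ to coincide with $D^*$ outside $R$, and since Kohnert moves preserve column totals, $\tilde{D}$ must contain exactly one cell in $R$ within each of columns $c_1$ and $c_2$. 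There are only $2 \times 3 = 6$ such configurations, all of which occur among the diagrams listed above.

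Finally, I would check that the induced order on these six elements is exactly the Hasse diagram of Lemma~\ref{lem:nonshellp}, with the two disjoint saturated chains $D_{\hat{0}} \precdot D_{11} \precdot D_1 \precdot D^*$ and $D_{\hat{0}} \precdot D_{22} \precdot D_2 \precdot D^*$ as its only covering relations. The one substantive point is the incomparability of the pairs $\{D_1,D_2\}$, $\{D_1,D_{22}\}$, $\{D_2,D_{11}\}$, and $\{D_{11},D_{22}\}$. This is the main obstacle, since a priori a Kohnert path between two elements of the interval could temporarily leave it; however, because Kohnert moves are strictly downward, any cell displaced from its prescribed $D^* \setminus R$ position cannot be restored, so a diagram lying in the interval must arise from moves that never disturb the outside-$R$ cells. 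Restricted to such moves, the only single-step Kohnert transitions from each of the six diagrams are those already exhibited, which forces the claimed covering structure and incomparabilities. The interval $[D_{\hat{0}},D^*]$ is thereby isomorphic to the poset of Lemma~\ref{lem:nonshellp}, so $\mathcal{P}(D)$ fails to be shellable by Theorem~\ref{thm:shellinterval}.
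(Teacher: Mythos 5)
Your proposal is correct and follows essentially the same route as the paper's proof: it constructs the same six-element interval $[\widehat{D},D^*]$, pins down its elements via Lemma~\ref{lem:strucprophelp} applied to the region $R$, establishes the covering relations with Corollary~\ref{cor:cover}, and identifies the interval with the non-shellable poset of Lemma~\ref{lem:nonshellp}. Your counting argument ($2\times 3=6$ admissible configurations inside $R$) and your transition enumeration for incomparability are slightly more streamlined than the paper's case-by-case applications of Lemma~\ref{lem:strucprophelp}, but the underlying ideas are the same.
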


    \begin{figure}[H]
    \centering
    $$\scalebox{0.9}{\begin{tikzpicture}[scale=0.65]
    \filldraw[draw=lightgray,fill=lightgray] (0,0) rectangle (1,1);
    \filldraw[draw=lightgray,fill=lightgray] (3,0) rectangle (4,1);
    \filldraw[draw=lightgray,fill=lightgray] (1,1) rectangle (5.5,2);
    \filldraw[draw=lightgray,fill=lightgray] (4,2) rectangle (5.5,3);
    \node at (0.5, 1.5) {$\bigtimes$};
    \node at (3.5, 2.5) {$\bigtimes$};
  \draw (-1,4.5)--(-1,-1)--(6,-1);
  \draw (0,1)--(1,1)--(1,2)--(0,2)--(0,1);
  \draw (3,2)--(4,2)--(4,3)--(3,3)--(3,2);
    \path[pattern=north west lines] (-1,-1)--(5.5,-1) -- (5.5,1)--(4,1)--(4,0)--(3,0)--(3,1)--(1,1)--(1,0)--(0,0)--(0,2)--(3,2)--(3,3)--(5.5,3)--(5.5, 4)--(-1,4)-- cycle;
    \node at (-2,0.5) {$r$};
    \node at (-2,1.5) {$r+1$};
    \node at (-2,2.5) {$r+2$};
    \node at (0.5,-1.5) {$c_1$};
    \node at (3.5,-1.5) {$c_2$};
\end{tikzpicture}}$$
    \caption{Subdiagrams described in Proposition~\ref{prop:strucasc}}
    \label{fig:shelllem1}
\end{figure}
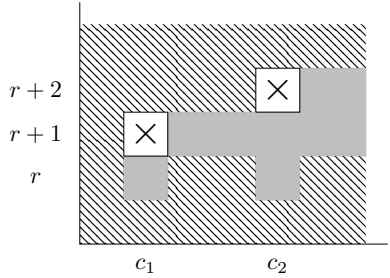

\begin{proof}
Considering Theorem~\ref{thm:shellinterval}, it suffices to show that $\mathcal{P}(D)$ contains an interval which is not shellable. Let $$\widehat{D}=\left(D^*\setminus\{(r+1,c_1),(r+2,c_2)\}\right)\cup\{(r,c_1),(r,c_2)\}.$$ Note that $\widehat{D}\in \mathcal{P}(D)$ since $\widehat{D}$ can be formed from $D^*$ by applying a Kohnert move at row $r+2$ of $D^*$ followed by two Kohnert moves at row $r+1$. We claim that the interval $[\widehat{D},D^*]$ is not shellable. To establish the claim, we first determine the elements of $[\widehat{D},D^*]$.

Note that $D^*$ and $\widehat{D}$ differ only in positions $\{(r+1,c_1),(r+2,c_2),(r,c_1),(r,c_2)\}$; that is, letting $R=\{(\tilde{r},c_1)~|~r\le \tilde{r}\le r+1\}\cup \{(\tilde{r},c_2)~|~r\le \tilde{r}\le r+2\}$, we have that $D^*\backslash R=\widehat{D}\backslash R=S$. Thus, applying Lemma~\ref{lem:strucprophelp}, it follows that $$\tilde{D}\backslash R=S$$ for all $\tilde{D}\in [\widehat{D},D^*]$. Note that this implies that all diagrams $\tilde{D}\in [\widehat{D},D^*]$ must be formed from $D^*$ by applying sequences of Kohnert moves at rows $r+1$ or $r+2$. In particular, it is straightforward to verify, keeping Lemma~\ref{lem:strucprophelp} in mind, that the only diagrams contained in $[\widehat{D},D^*]$ are $$D^*,\quad\quad D^1_1=D^*\ldownarrow^{(r+2,c_2)}_{(r+1,c_2)},\quad\quad D^1_2=D^1_1\ldownarrow^{(r+1,c_2)}_{(r,c_2)},\quad\quad D^2_1=D^*\ldownarrow^{(r+1,c_1)}_{(r,c_1)},$$ $$D_2^2=D_1^2\ldownarrow^{(r+2,c_2)}_{(r+1,c_2)},\quad\quad\text{and}\quad\quad \widehat{D}.$$ Now, to determine how the diagrams of $\left[\widehat{D},D^*\right]$ are related, note that we can form $\widehat{D}$ from $D^*$ by either applying in succession
\begin{enumerate}
    \item[1)] a Kohnert move at row $r+2$ followed by two Kohnert moves at row $r+1$, or
    \item[2)] a Kohnert move at row $r+1$, a Kohnert move at row $r+2$, then a Kohnert move at row $r+1$.
\end{enumerate}
The first sequence of Kohnert moves described above corresponds to the chain $$\widehat{D}\precdot D^1_2\precdot D^1_1 \precdot D^*$$ while the second sequence corresponds to $$\widehat{D}\precdot D^2_2\precdot D^2_1 \precdot D^*.$$ The fact that all relations in the two chains are covering relations follows from Corollary~\ref{cor:cover}. We claim that no other relations exist between the elements of $[\widehat{D},D^*]$. To see this, first note that since $D_1^2,D_1^1\precdot D^*$ and $D_1^2\neq D_1^1$, it follows that $D_1^1$ and $D_1^2$ are unrelated in $[\widehat{D},D^*]$. Similarly, since $\hat{D}\precdot D_2^1,D_2^2$ and $D_2^1\neq D_2^2$, it follows that $D_2^1$ and $D_2^2$ are unrelated in $[\widehat{D},D^*]$. It remains to consider relations between the elements $D_1^1$ and $D_2^2$, and the elements $D_1^2$ and $D_2^1$. Starting with $D_1^1$ and $D_2^2$, note that since $D_1^1\succ\widehat{D}\precdot D_2^2$, if $D_1^1$ and $D_2^2$ are related, then $D_2^2\prec D_1^1$. Now, since $D_1^1\backslash\{(r+1,c_1),(r,c_1)\}=D_2^2\backslash\{(r+1,c_1),(r,c_1)\}$, if $D_2^2\prec D_1^1$, then applying Lemma~\ref{lem:strucprophelp} it follows that there exists a sequence of Kohnert moves at rows $r$ and $r+1$ which takes $D_1^1$ to $D_2^2$. Evidently, such a sequence of Kohnert moves does not exist. Consequently, $D_1^1$ and $D_2^2$ are unrelated in $[\widehat{D},D^*]$. Moving to $D_1^2$ and $D_2^1$, since $D_1^2\succ \widehat{D}\precdot D_2^1$, if $D_1^2$ and $D_2^1$ are related, then $D_2^1\prec D_1^2$; but $$D_1^2\backslash \{(r+2,c_2),(r+1,c_2),(r,c_2)\}=\widehat{D}\backslash \{(r+2,c_2),(r+1,c_2),(r,c_2)\}\neq D_2^1\backslash \{(r+2,c_2),(r+1,c_2),(r,c_2)\},$$ so that applying Lemma~\ref{lem:strucprophelp} we have $D_2^1\notin [\widehat{D},D_1^2]$. Thus, since $\widehat{D}\prec D_2^1$, it follows that $D_2^1\not\prec D_1^2$, i.e., $D_1^2$ and $D_2^1$ are unrelated in $[\widehat{D},D^*]$, establishing the claim.

Now, let $\mathcal{S}$ be the poset of Example~\ref{ex:hex}, and define the map  $f:[\widehat{D},D^*]\to\mathcal{S}$ by $f(\widehat{D})=1$, $f(D^1_2)=2,$ $f(D^1_1)=3,$ $f(D^2_2)=4,$ $f(D_1^2)=5$, and $f(D^*)=6$. Considering our work above, it follows that $f$ forms an order-preserving bijection. Therefore, $[\widehat{D},D^*]$ is not shellable. The result follows.
\end{proof}

\begin{prop}\label{prop:strucblock}
Let $D$ be a diagram. Suppose that there exists $D^*\in \mathcal{P}(D)$ such that for some $r,r^*,c,c^*\in\mathbb{N}$ satisfying $c< c^*-1$ and $r<r^*-1$ we have
\begin{itemize}
    \item[\textup{(i)}] $(r^*,c^*)\in D^*$ and $(r^*, \tilde{c})\notin D^*$ for all $\tilde{c}>c^*$;
    \item[\textup{(ii)}] $|\{(r^*,\tilde{c})\in D^*~|~c<\tilde{c}<c^*\}|>0$;
    \item[\textup{(iii)}] for $r < \tilde{r}\le r^*$, $(\tilde{r},c)\in D^*$;
    \item[\textup{(iv)}] $(r,c),(r^*-1,\tilde{c})\notin D^*$ for $\tilde{c}>c$; and 
    \item[\textup{(v)}] for each $\tilde{r}$ satisfying $r<\tilde{r}<r^*-1$ there exists $\tilde{c}>c$ such that $(\tilde{r},\tilde{c})\in D^*$.
\end{itemize}
Then $\mathcal{P}(D)$ is not shellable. \textup(See Figure~\ref{fig:strucblock} for an illustration of $D^*$.\textup)
\end{prop}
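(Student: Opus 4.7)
The approach will mirror the proof of Proposition~\ref{prop:strucasc}: by Theorem~\ref{thm:shellinterval}, it suffices to exhibit a non-shellable interval $[\widehat D, D^*] \subseteq \mathcal{P}(D)$, and I will do so by constructing $\widehat D$ so that $[\widehat D, D^*]$ is isomorphic to a poset of the form in Lemma~\ref{lem:nonshellp}.

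The construction of $\widehat D$ exploits two Kohnert descents from $D^*$ that can be performed essentially independently. Condition (iv) implies $(r^*-1,c)$ is the rightmost cell in row $r^*-1$, and a Kohnert move at row $r^*-1$ sends it all the way down to $(r,c)$: it jumps over the column of cells $(r+1,c),\ldots,(r^*-2,c)$ supplied by (iii), landing in the slot $(r,c)$, which is empty by (iv). By condition (v) together with Corollary~\ref{cor:cover}, this is a covering relation. Separately, a Kohnert move at row $r^*$ sends $(r^*,c^*)$ down to $(r^*-1,c^*)$ (empty by (iv)), which is also a covering relation. These two moves affect disjoint positions, so either ordering produces a common diagram. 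I then define $\widehat D$ by continuing with further downward moves of $(r^*,c^*)$ into column $c^*$ and of the vacated cell in column $c$ toward the lower rows, so that both resulting chains in $[\widehat D, D^*]$ contain at least three covering relations, as required for $n_1,n_2 \geq 2$ in Lemma~\ref{lem:nonshellp}. Condition (ii), which furnishes at least one cell of row $r^*$ strictly between columns $c$ and $c^*$, plays the role of a ``separator'' that distinguishes the two orderings at intermediate steps.

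Next, I would apply Lemma~\ref{lem:strucprophelp} to the region $R$ consisting of those cells in columns $c$ and $c^*$ between rows $r$ and $r^*$, concluding that every $\tilde D \in [\widehat D, D^*]$ must agree with $D^*$ outside $R$. This restricts $[\widehat D, D^*]$ to a finite, manageable list of candidate diagrams, which I would then sift through using Corollary~\ref{cor:cover} to compute all covering relations. The goal is to verify that the resulting Hasse diagram is that of Lemma~\ref{lem:nonshellp} with $n_1,n_2 \geq 2$: two saturated chains from $D^*$ down to $\widehat D$ that share only their endpoints, with no bridging covering relations between intermediate elements.

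The principal obstacle is this enumeration step. Most delicate is the verification that after performing the Kohnert move at row $r^*$ first (so that $(r^*-1,c^*)$ is now occupied), the cell $(r^*-1,c)$ can no longer descend to $(r,c)$ in a single covering move, forcing the ``$(r^*,c^*)$-first'' chain to take a distinct route through the interval; and, symmetrically, that the descent of $(r^*,c^*)$ along column $c^*$ does not inadvertently coincide with any intermediate configuration of the ``$(r^*-1,c)$-first'' chain. Condition (ii) ensures these intermediate configurations are genuinely distinct, closing the argument and establishing the isomorphism to the non-shellable poset of Lemma~\ref{lem:nonshellp}.
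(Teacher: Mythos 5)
There is a genuine gap: the mechanism you describe for producing the non-shellable interval is not the one that actually works, and as stated it would produce a shellable interval instead. You frame the construction around ``two Kohnert descents that can be performed essentially independently,'' affecting disjoint positions, ``so either ordering produces a common diagram.'' But two commuting, independent covering moves generate a diamond $\{\hat{0}\prec a,b\prec\hat{1}\}$, i.e.\ the poset of Lemma~\ref{lem:nonshellp} with $n_1=n_2=1$ -- which is excluded there precisely because it \emph{is} shellable. The actual source of non-shellability is that the two saturated chains from $D^*$ down to $\widehat{D}$ have \emph{different lengths}. Writing $C=\{c=c_0<c_1<\cdots<c_{m-1}=c^*\}$ for the columns in $[c,c^*]$ occupied in row $r^*$ (so $m\ge 3$ by (i)--(iii) and the separator cells of (ii)), one chain applies $m$ Kohnert moves at row $r^*$ only: the cells at $(r^*,c_{m-1}),\ldots,(r^*,c_1)$ each drop one row to $r^*-1$, and then the cell at $(r^*,c_0)=(r^*,c)$ falls in a \emph{single} move all the way to $(r,c)$, jumping over the full column supplied by (iii). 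The other chain first moves $(r^*-1,c)$ down to $(r,c)$ and then applies the same $m$ moves at row $r^*$; now the hole at $(r,c)$ is filled, so the last cell only drops one row. This gives chains of lengths $m$ and $m+1$, i.e.\ $n_1=m-1\ge 2$ and $n_2=m\ge 3$ in Lemma~\ref{lem:nonshellp}.

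Your definition of $\widehat{D}$ is also not correct as written: you propose ``continuing with further downward moves of $(r^*,c^*)$ into column $c^*$,'' but that cell descends exactly one row (to $(r^*-1,c^*)$) and no further -- nothing in the hypotheses controls what lies below row $r^*-1$ in column $c^*$, so further descent of that cell need not even stay within a well-controlled interval. The extra covering relations that pad both chains to the required length come instead from the cells of row $r^*$ in the intermediate columns $c_1,\ldots,c_{m-2}$ guaranteed by condition (ii), each dropping one row. So condition (ii) is not merely a ``separator distinguishing the two orderings''; it is what makes the shorter chain have at least two intermediate elements. Your remaining outline (Lemma~\ref{lem:strucprophelp} applied to the region of affected cells to enumerate the interval, then Corollary~\ref{cor:cover} and non-relatedness arguments to verify the Hasse diagram) matches the paper's strategy, but without the correct $\widehat{D}$ and the length-discrepancy mechanism the enumeration cannot close the argument.
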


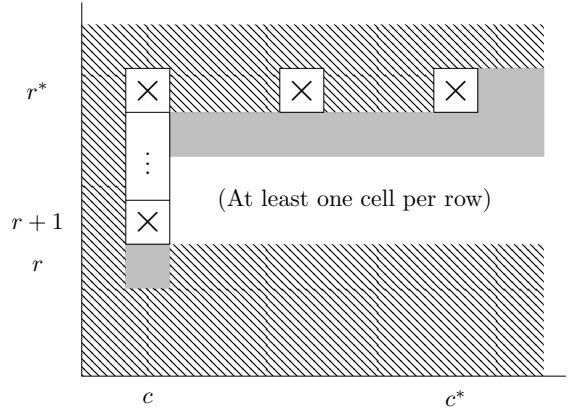
\begin{figure}[H]
    \centering
    $$\scalebox{0.9}{\begin{tikzpicture}[scale=0.65]
    \node at (8.5, -0.5) {$c^*$};
    \node at (1.5,6.5) {$\bigtimes$};
    \node at (8.5,6.5) {$\bigtimes$};
    \node at (-1,6.5) {$r^*$};
    \node at (-1,3.5) {$r+1$};
    \node at (-1,2.5) {$r$};
    \node at (1.5,5) {$\vdots$};
    \node at (1.5,3.5) {$\bigtimes$};
    \node at (5,6.5) {$\bigtimes$};
    \node at (1.5, -0.5) {$c$};
    \node at (6.2, 4) {(At least one cell per row)};
    \filldraw[draw=lightgray,fill=lightgray] (1,2) rectangle (2,3);
    \filldraw[draw=lightgray,fill=lightgray] (9,6) rectangle (10.5,7);
    \filldraw[draw=lightgray,fill=lightgray] (2,5) rectangle (10.5,6);
    \draw (0,8.5)--(0,0)--(11,0);
    \draw (8,6)--(9,6)--(9,7)--(8,7)--(8,6);
    \draw (4.5,6)--(5.5,6)--(5.5,7)--(4.5,7)--(4.5,6);
    \draw (1,3)--(2,3)--(2,7)--(1,7)--(1,3);
    \draw (1,4)--(2,4);
    \draw (1,6)--(2,6);
    \path[pattern=north west lines] (0,0)--(10.5,0)--(10.5,3)--(2,3)--(2,2)--(1,2) -- (1,7) -- (2,7) -- (2,6) -- (4.5,6) -- (4.5,7) -- (5.5,7) -- (5.5,6) -- (8,6)--(8,7) -- (10.5,7) -- (10.5,8) -- (0,8) -- cycle;
\end{tikzpicture}}$$
    \caption{Subdiagrams described in Proposition~\ref{prop:strucblock}}
    \label{fig:strucblock}
\end{figure}

\begin{proof}
As in the proof of Proposition~\ref{prop:strucasc}, we show that there exists a diagram $\widehat{D}\in \mathcal{P}(D)$ such that $[\widehat{D}, D^*]$ is isomorphic to a poset of the form described in Lemma~\ref{lem:nonshellp} so that, by Theorem~\ref{thm:shellinterval}, $\mathcal{P}(D)$ is not shellable. To define $\widehat{D}$, let
$$C= \left\{\tilde{c}~|~ c\leq \tilde{c} \leq c^*\ \text{ and } \ (r^*,\tilde{c})\in D_{max} \right\} = \left\{ c=c_0 < c_1 < \cdots < c_{m-1} = c^*\right\},$$ where, by assumption, $m\ge 3$.
Then $\widehat{D}$ is the diagram obtained from $D^*$ by applying $m$ Kohnert moves at row $r^*$; that is, $\widehat{D}$ is the diagram obtained from $D^*$ by moving the rightmost $m-1$ cells in row $r^*$ down to row $r^*-1$, and moving the $m^{th}$ cell from right to left in row $r^*$ down to row $r$. 

First, to determine the elements of $[\widehat{D},D^*]$, consider the following two chains from $\widehat{D}$ to $D^*$, both defined by the sequences of Kohnert moves applied to form $\widehat{D}$ from $D^*$.
\begin{itemize}
    \item[1)] Form the chain $\mathcal{C}_1$ by applying $m$ Kohnert moves at row $r^*$, i.e.,
    $$ 
    \mathcal{C}_1: \qquad D^* \succ D^1_1  \succ 
    D^1_2 \succ \cdots \succ D^1_{m-1} \succ \widehat{D},
    $$
    where 
    $D^1_1 := D^* \ldownarrow^{(r^*,c_{m-1})}_{(r^*-1,c_{m-1})}$, $D^1_{i+1}:= D^1_{i} \ldownarrow^{(r^*,c_{m-i-1})}_{(r^*-1,c_{m-i-1})}$ for $1\leq i \leq m-2$, and $\widehat{D} = D^1_{m-1} \ldownarrow^{(r^*,c_0)}_{(r,c_0)}$.

    \item[2)] Form the chain $\mathcal{C}_2$ by applying one Kohnert move at row $r^*-1$ followed by $m$ Kohnert moves at row $r^*$, i.e.,
      $$ 
    \mathcal{C}_2:\qquad D^* \succ D_1^2  \succ 
    D_2^2 \succ \cdots \succ D_{m}^2 \succ \widehat{D},
    $$
    where
 $D_1^2 := D^* \ldownarrow^{(r^*-1,c_0)}_{(r,c_0)}$, $D_{i+1}^2:= D_{i}^2 \ldownarrow^{(r^*,c_{m-i})}_{(r^*-1,c_{m-i})}$ for $1\leq i \leq m-1$, and $\widehat{D} = D_{m}^2\ldownarrow^{(r^*,c_0)}_{(r^*-1,c_0)}$.
\end{itemize}
We claim that the elements contained in $\mathcal{C}_1$ and $\mathcal{C}_2$ constitute the elements of $[\widehat{D},D^*]$. To see this, set $R=\{(\tilde{r},c)~|~r\le \tilde{r}\le r^*\}\cup\bigcup_{i=1}^{m-1}\{(r^*,c_i),(r^*-1,c_i)\}$ and note that, applying Lemma~\ref{lem:strucprophelp}, we have $D^*\backslash R=\widehat{D}\backslash R=\tilde{D}\backslash R$ for all $\tilde{D}\in [\widehat{D},D^*]$. Thus, every $\tilde{D}\in [\widehat{D},D^*]$ must satisfy property (v) of $D^*$. Consequently, applying a nontrivial Kohnert move to $\tilde{D}\in [\widehat{D},D^*]$ at any row other than $r^*-1$ or $r^*$ must affect the position of a cell outside of $R$; but, since $D^*\backslash R=\widehat{D}\backslash R=\tilde{D}\backslash R$, this implies that $\tilde{D}\in [\widehat{D},D^*]$ must be formed from $D^*$ by a sequence of Kohnert moves at rows $r^*-1$ and $r^*$. Under these restrictions, it is straightforward to verify the claim.

Now, we determine the relations defining the poset $[\widehat{D},D^*]$. Applying Corollary~\ref{cor:cover}, it follows that $$\widehat{D}\precdot D^1_{m-1}\precdot\cdots\precdot D^1_1\precdot D^*$$ and $$\widehat{D}\precdot D^2_{m}\precdot\cdots\precdot D_1^2\precdot D^*.$$ It remains to consider relations between $D^1_i$ and $D_j^2$ for $1\le i\le m-1$ and $1\le j\le m$. We show that there are no such relations. Let $$R^1_i=\{(r^*,c_k),(r^*-1,c_k)~|~m-i\le k\le m-1\}$$ for $1\le i\le m-1$. Since $$(r,c)\notin D^*\backslash R^1_i=D^1_i\backslash R^1_i\neq D_j^2\backslash R^1_i\ni (r,c)$$ for $1\le i\le m-1$ and $1\le j\le m$, it follows from Lemma~\ref{lem:strucprophelp} that $D_j^2\notin [D^1_i,D^*]$ for $1\le i\le m-1$ and $1\le j\le m$; that is, $D^1_i\not\prec D_j^2$ for $1\le i\le m-1$ and $1\le j\le m$. Consequently, if $D_j^2$ and $D^1_i$ are related for some choice of $1\le i\le m-1$ and $1\le j\le m$, then $D_j^2\prec D^1_i$. For a contradiction, assume that $D_j^2\prec D^1_i$ for some choice of $1\le i\le m-1$ and $1\le j\le m$. Then $D_m^2\prec D^1_i$. Letting $$R^2_i=\{(r^*,c_k),(r^*-1,c_k)~|~1\le k< m-i\}\cup\{(\tilde{r},c)~|~r\le \tilde{r}\le r^*-1\}$$ for $1\le i\le m-1$, we have that $$D_m^2\backslash R^2_i=D^1_i\backslash R^2_i$$ for $1\le i\le m-1$. Thus, since $D^1_i$ satisfies property (v) of $D^*$ as noted above, it follows from Lemma~\ref{lem:strucprophelp} that all diagrams in $[D_m^2,D^1_i]$ -- in particular, $D_m^2$ -- must be formed from $D^1_i$ by applying sequences of Kohnert moves at rows $r^*$ or $r$; but applying a Kohnert move at row $r^*$ results in $D_{min}$ which is not contained in $[D_m^2,D^1_i]$, while applying a Kohnert move at row $r$ either does nothing or affects a cell in some column $\tilde{c}<c$, resulting in a diagram not contained in $[D_m^2,D^1_i]$, by Lemma~\ref{lem:strucprophelp}. Therefore, $D_m^2\not\prec D^1_i$ and we conclude that there are no relations between $D^1_i$ and $D_j^2$ for $1\le i\le m-1$ and $1\le j\le m$.

Finally, above we showed that the poset $[\widehat{D},D^*]$ is completely defined by the relations $$D^* \succ D^1_1  \succ 
    D^1_2 \succ \cdots \succ D^1_{m-1} \succ \widehat{D}$$ and $$D^* \succ D_1^2  \succ 
    D_2^2 \succ \cdots \succ D_{m}^2 \succ \widehat{D}.$$ Let $\mathcal{P}_m$ be a poset of the form described in Lemma~\ref{lem:nonshellp} with $n_1=m-1$ and $n_2=m$, i.e., $\mathcal{P}_m=\{p_i^1\}_{i=1}^{m-1}\cup\{p_i^2\}_{i=1}^{m}\cup\{\hat{0},\hat{1}\}$ with 
\begin{itemize}
    \item $\hat{0}\prec p_1^1,p^2_1$,
    \item $p_i^1\prec p_{i+1}^1$ for $1\le i<m-1$,
    \item $p_i^2\prec p_{i+1}^2$ for $1\le i<m$, and
    \item $p_{m-1}^1,p_{m}^2\prec\hat{1}$.
\end{itemize}
Define the map $f:[\widehat{D},D^*]\to\mathcal{P}_m$ by $f(\widehat{D})=\hat{0}$, $f(D^1_i)=p_{m-i}^1$ for $1\le i\le m-1$, $f(D_j^2)=p_{m-j+1}^2$ for $1\le j\le m$, and $f(D^*)=\hat{1}$. Considering our work above, it follows that $f$ is an order-preserving bijection. Therefore, $[\widehat{D},D^*]$ is not shellable. The result follows.
\end{proof}

Ongoing, we will not require the full strength of Proposition~\ref{prop:strucblock}, but instead, we make use of the following special case.

\begin{corollary}\label{cor:strucblock}
    Let $D$ be a diagram. Suppose that there exists $D^*\in \bl{\mathcal{P}}(D)$ such that for some $r,c_1,c_2\in\mathbb{N}$ satisfying $c_1<c_2$ we have
    \begin{itemize}
        \item[\textup{(i)}] $(r+1,c_1),(r+2,c_1),(r+2,c_2)\in D^*$,
        \item[\textup{(ii)}] $|\{(r+2,\tilde{c})\in D^*~|~c_1<c<c_2\}|>0$
        \item[\textup{(iii)}] $(r+2,\tilde{c})\notin D^*$ for $\tilde{c}>c_2$,
        \item[\textup{(iv)}] $(r+1,\tilde{c})\notin D^*$ for $\tilde{c}>c_1$, and
        \item[\textup{(v)}] $(r,c_1)\notin D^*$.
    \end{itemize}
     Then $\mathcal{P}(D)$ is not shellable. \textup(See Figure~\ref{fig:corstrucblock} for an illustration of $D^*$.\textup)
\end{corollary}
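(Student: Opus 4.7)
The plan is to derive Corollary~\ref{cor:strucblock} as a direct instance of Proposition~\ref{prop:strucblock}, taking $r^{*}=r+2$ and $c^{*}=c_{2}$ (in the proposition's notation), with the corollary's $r$ and $c_{1}$ playing the roles of the proposition's $r$ and $c$. Once this identification is made, the task reduces to verifying each hypothesis (i)--(v) of the proposition from hypotheses (i)--(v) of the corollary, and then invoking the proposition; no new combinatorial argument is required, since all of the non-shellability work has already been carried out in the proof of Proposition~\ref{prop:strucblock}.

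First I would check the background inequalities. The condition $r<r^{*}-1$ becomes $r<r+1$, which holds automatically. The condition $c<c^{*}-1$ becomes $c_{1}<c_{2}-1$; this follows from hypothesis (ii) of the corollary, since the existence of an integer column $\tilde{c}$ strictly between $c_{1}$ and $c_{2}$ with $(r+2,\tilde{c})\in D^{*}$ forces $c_{2}\geq c_{1}+2$.

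Next I would verify the five numbered conditions of Proposition~\ref{prop:strucblock}. Condition (i) asks that $(r^{*},c^{*})=(r+2,c_{2})\in D^{*}$ with no cells strictly to the right in row $r+2$, which together are corollary hypotheses (i) and (iii). Condition (ii) coincides with corollary hypothesis (ii) after the substitution. Condition (iii) asks for $(\tilde{r},c_{1})\in D^{*}$ whenever $r<\tilde{r}\leq r+2$, i.e., for the positions $(r+1,c_{1})$ and $(r+2,c_{1})$; these are supplied by corollary hypothesis (i). Condition (iv) is $(r,c_{1})\notin D^{*}$ together with $(r+1,\tilde{c})\notin D^{*}$ for $\tilde{c}>c_{1}$, which are corollary hypotheses (v) and (iv). Finally, condition (v) of the proposition quantifies over integer rows $\tilde{r}$ with $r<\tilde{r}<r^{*}-1=r+1$, a range containing no integers, so it holds vacuously.

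With all hypotheses confirmed, Proposition~\ref{prop:strucblock} applies and yields that $\mathcal{P}(D)$ is not shellable. There is no genuine obstacle beyond the bookkeeping of matching hypotheses; the only point worth flagging is that the vacuous satisfaction of condition (v) of the proposition is precisely what permits the corollary to drop any analog of it and impose only the streamlined hypotheses listed.
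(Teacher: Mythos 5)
Your proposal is correct and is exactly the paper's argument: the paper's proof consists of the single line that the corollary is the case $r^*=r+2$ of Proposition~\ref{prop:strucblock}, and your verification of the hypotheses (including the vacuity of condition (v) and the deduction of $c_1<c_2-1$ from hypothesis (ii)) just makes that reduction explicit.
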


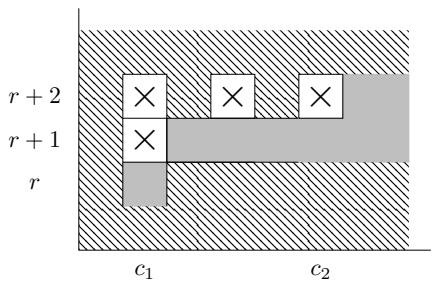
\begin{figure}[H]
    \centering
    $$\scalebox{0.9}{\begin{tikzpicture}[scale=0.65]
    \filldraw[fill=lightgray] (1,1) rectangle (4,2);
    \filldraw[draw=lightgray,fill=lightgray] (4,1) rectangle (5,2);
    \filldraw[draw=lightgray,fill=lightgray] (0,0) rectangle (1,1);
    
    \filldraw[draw=lightgray,fill=lightgray] (5,1) rectangle (6.5,3);
    \node at (0.5, 1.5) {$\bigtimes$};
    \node at (0.5, 2.5) {$\bigtimes$};
  \node at (4.5, 2.5) {$\bigtimes$};
  \node at (0.5,-1.5) {$c_1$};
  \node at (4.5,-1.5) {$c_2$};
  \node at (2.5, 2.5) {$\bigtimes$};
  \node at (-2, 0.5) {$r$};
  \node at (-2, 1.5) {$r+1$};
  \node at (-2, 2.5) {$r+2$};
  \draw (-1,4.5)--(-1,-1)--(7,-1);
  \draw (0,3)--(0,0);
  \draw (0,3)--(1,3)--(1,2);
  \draw (0,1)--(1,1)--(1,2)--(0,2);
  \draw (2,2)--(3,2)--(3,3)--(2,3)--(2,2);
  \draw (4,2)--(5,2)--(5,3)--(4,3)--(4,2);
  \draw (1,0)--(1,1)--(2,1);
  \draw (2,1)--(3,1);
  \path[pattern=north west lines] (-1,-1)--(6.5,-1) -- (6.5,1) --(1,1)--(1,0)--(0,0)--(0,3)--(1,3)--(1,2)--(2,2)--(2,3)--(3,3)--(3,2)--(4,2)--(4,3)--(6.5,3)--(6.5, 4)--(-1,4)-- cycle;
\end{tikzpicture}}$$
    \caption{Subdiagrams described in Corollary~\ref{cor:strucblock}}
    \label{fig:corstrucblock}
\end{figure}

\begin{proof}
    This result corresponds to taking $r^*=r+2$ in Proposition~\ref{prop:strucblock}.
\end{proof}

\begin{remark}
    The families of subdiagrams considered in Proposition~\ref{prop:strucblock} and Corollary~\ref{cor:strucblock} form a subset of those considered in Theorem 3.5 and Corollary 3.6, respectively, of \textup{\textbf{\cite{KPoset1}}}.
\end{remark}

In the sections that follow, we utilize the above results to characterize when diagrams belonging to restricted families are associated with (EL-)shellable Kohnert posets.

\section{Hook diagrams}\label{sec:hook}

In this section, we give a complete classification of diagrams $D$ for which $\mathcal{P}(D)$ is (EL-)shellable and either
\begin{enumerate}
    \item[1)] each nonempty column of $D$ contains exactly one cell or
    \item[2)] the first two rows of $D$ are empty.
\end{enumerate}
In both cases, the classification is given in terms of certain diagrams that we call ``hook diagrams".

For $r_1\le r_2\in\mathbb{Z}_{>0}$ and $C=\{c_1,\cdots,c_m\}\subset\mathbb{Z}_{>0}$, if $c_m=\max C$, then $$H(r_1,r_2;C)=\{(r_2,c_i)~|~1\le i\le m\}\cup\{(j,c_m)~|~r_1\le j\le r_2\}.$$ A \textbf{hook diagram} $D$ is a diagram for which there exists $r_1,r_2\in \mathbb{Z}_{>0}$ and $C\subset\mathbb{Z}_{>0}$ such that $D\in KD(H(r_1,r_2;C))$.

\begin{ex}\label{ex:hook}
    In Figure~\ref{fig:hook} \textup{(a)} we illustrate $H(4,6;\{1,2,4,7\})$ and in Figure~\ref{fig:hook} \textup{(b)} a hook diagram $D\in KD(H(4,6;\{1,2,4,7\}))$.
    \begin{figure}[H]
        \centering
        $$\begin{tikzpicture}[scale=0.4]
    \draw[thick] (7.5,-1)--(0,-1)--(0,5.5);
        \draw (0,4)--(2,4)--(2,5)--(0,5);
        \draw (1,4)--(1,5);
        \node at (1.5,4.5){$\bigtimes$};
        \node at (0.5,4.5){$\bigtimes$};

        \draw (3,4)--(4,4)--(4,5)--(3,5)--(3,4);
        \node at (3.5,4.5){$\bigtimes$};

        \draw (6,2)--(7,2)--(7,5)--(6,5)--(6,2);
        \draw (6,3)--(7,3);
        \draw (6,4)--(7,4);
        \node at (6.5,2.5){$\bigtimes$};
        \node at (6.5,3.5){$\bigtimes$};
        \node at (6.5,4.5){$\bigtimes$};
        \node at (3.5, -2) {(a)};
\end{tikzpicture}\quad\quad\quad\quad\begin{tikzpicture}[scale=0.4]
    \draw[thick] (7.5,-1)--(0,-1)--(0,5.5);
        \draw (0,4)--(1,4)--(1,5)--(0,5);
        \node at (0.5,4.5){$\bigtimes$};

        \draw (1,3)--(2,3)--(2,4)--(1,4)--(1,3);
        \node at (1.5,3.5){$\bigtimes$};

        \draw (3,2)--(4,2)--(4,3)--(3,3)--(3,2);
        \node at (3.5,2.5){$\bigtimes$};

        \draw (7,1)--(7,3)--(6,3)--(6,1)--(7,1);
        \draw (6,2)--(7,2);

        \draw (6,-1)--(6,0)--(7,0)--(7,-1);
        \node at (6.5,-0.5){$\bigtimes$};
        \node at (6.5,1.5){$\bigtimes$};
        \node at (6.5,2.5){$\bigtimes$};
        \node at (3.5, -2) {(b)};
\end{tikzpicture}$$
        \caption{(a) $H(4,6;\{1,2,4,7\})$ and (b) a hook diagram $D\in KD(H(4,6;\{1,2,4,7\}))$}
        \label{fig:hook}
    \end{figure}
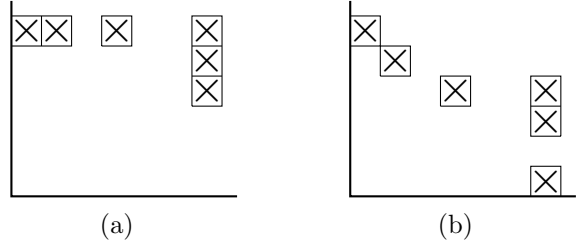
\end{ex}

The following lemma provides a characterization of hook diagrams in terms of the distribution of their cells.

\begin{prop}\label{prop:formhook}
    Let $D$ be a diagram. Then $D$ is a hook diagram if and only if there exists $c^*>0$ such that
\begin{enumerate}
    \item[\textup{(i)}] column $c^*$ of $D$ is nonempty and for all $\tilde{c}>c^*$, column $\tilde{c}$ of $D$ is empty;
    \item[\textup{(ii)}] if $D$ contains more than one cell in column $c$, then $c=c^*$;
    \item[\textup{(iii)}] if $r$ is maximal such that $(r,c^*)\in D$ and $\tilde{c}<c$, then $(\tilde{r},\tilde{c})\in D$ implies that $\tilde{r}\ge r$; and
    \item[\textup{(iv)}] if $(\tilde{r}_1,\tilde{c}_1),(\tilde{r}_2,\tilde{c}_2)\in D$ with $\tilde{c}_1<\tilde{c}_2<c^*$, then $\tilde{r}_1\ge \tilde{r}_2$.
\end{enumerate}
\end{prop}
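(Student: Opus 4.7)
The plan is to prove the two implications separately. For the forward direction, I would verify conditions (i)-(iv) directly for the base hook $H(r_1, r_2; C)$ and then show Kohnert moves preserve these conditions, so every descendant in $KD(H(r_1, r_2; C))$ satisfies (i)-(iv). For the reverse direction, given $D$ satisfying (i)-(iv), I would construct a particular hook from the combinatorial data of $D$ and exhibit an explicit sequence of Kohnert moves from this hook to $D$.

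For the forward direction, checking (i)-(iv) for $H(r_1, r_2; C)$ with $c^* := c_m = \max C$ is immediate from the definition: column $c^*$ is the unique rightmost nonempty column, it is the only column containing more than one cell, and every cell in columns to the left of $c^*$ lies at row $r_2$, which equals the top row of column $c^*$. For preservation under Kohnert moves, suppose $D$ satisfies (i)-(iv), let $r$ denote the top of column $c^*$ in $D$, and let $D'$ arise from a Kohnert move at row $r'$. Conditions (i) and (ii) are preserved automatically because Kohnert moves neither shift cells between columns nor change column-wise cell counts. For (iii) and (iv), I would case-split on $r'$ versus $r$: if $r' < r$, then (iii) forbids any cell at row $r'$ in columns $< c^*$, so the move either does nothing or acts on a cell in column $c^*$ and stays within column $c^*$, leaving (iii) and (iv) undisturbed; if $r' = r$, the rightmost cell at row $r'$ is $(r, c^*)$ by (i), and its downward move only lowers the top of $c^*$, which is consistent with the unchanged cells in columns $< c^*$; if $r' > r$, condition (iii) pins the moved cell in some column $\tilde{c} < c^*$, which has only one cell by (ii), so it slides to $(r' - 1, \tilde{c})$, and the rightmost property of $\tilde{c}$ at row $r'$ in $D$ yields the strict inequality needed to preserve (iv).

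For the reverse direction, I would set $N$ equal to the number of cells of $D$ in column $c^*$, $r_2$ to the maximum row of a cell in $D$, $r_1 := r_2 - N + 1$, and $C$ to the set of nonempty columns of $D$; then I would claim $D \in KD(H(r_1, r_2; C))$ and construct an explicit sequence of Kohnert moves from $W := H(r_1, r_2; C)$ to $D$ by processing rows top-down: for $k = r_2, r_2 - 1, \ldots, 1$, apply a Kohnert move at row $k$ of $W$ repeatedly until row $k$ of $W$ matches row $k$ of $D$. The main obstacle is verifying that the inner loop is well-defined, i.e., that whenever row $k$ of $W$ fails to match row $k$ of $D$, the rightmost cell at row $k$ of $W$ is ``extra'' (not in $D$ at row $k$), so the Kohnert move legitimately sends a surplus cell downward. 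The worrisome scenario is having a non-extra cell rightmost at row $k$ of $W$ with an extra cell sitting strictly to its left; this is ruled out using (iii) and (iv) together with the invariant (maintained via the forward direction applied to $W$ and via Lemma~\ref{lem:strucprophelp}) that every column $c_i < c^*$ has its cell currently at row $\max(\tilde{r}_i, k)$ in $W$ at the start of processing row $k$. Specifically, if the rightmost cell at row $k$ of $W$ is non-extra, then either $(k, c^*) \in D$, in which case $k$ is at most the top row of column $c^*$ in $D$ and (iii) forces $\tilde{r}_i \ge k$ for every $c_i < c^*$ (so no extras exist in columns $< c^*$), or the rightmost cell is some $c_i < c^*$ with $\tilde{r}_i = k$, in which case by (iv) every $c_j$ strictly to the left at row $k$ of $W$ also has $\tilde{r}_j = k$ and is non-extra. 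Termination follows from the strict decrease of $\sum_{(r,c)\in W} r$ under each Kohnert move, and the row-by-row match at termination together with column-wise cell-count preservation forces $W = D$.
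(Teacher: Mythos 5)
Your overall strategy matches the paper's --- prove the forward direction by showing that the conditions propagate from $H(r_1,r_2;C)$ under Kohnert moves, and the backward direction by exhibiting an explicit move sequence from a hook to $D$ --- but both halves are executed differently. For the forward direction the paper runs a minimal-counterexample argument (the first diagram violating (iii) or (iv) along the move sequence cannot arise from one satisfying it), whereas you prove preservation of (i)--(iv) directly, with a case split on the row $r'$ of the move relative to the top row $r$ of column $c^*$; your version is arguably cleaner and more complete, since the paper only sketches the argument for (iv). For the backward direction both proofs target the same hook $H(r_2-N+1,r_2;C)$, but the paper moves cells one at a time all the way to their final positions (column-$c^*$ cells first, then the remaining cells in order of target row), while you run a greedy top-down row-by-row sweep.

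The one substantive gap is in the correctness of that sweep. Your stated invariant tracks only the columns $c_i<c^*$, and your case analysis rules out the scenario of a non-extra rightmost cell with an extra to its left; but the inner loop can also fail if row $k$ of $W$ is \emph{missing} a cell that $D$ has in row $k$, since Kohnert moves at row $k$ only remove cells from that row. For columns $c_i<c^*$ your invariant excludes this, but for column $c^*$ you need a separate invariant that you never state: at the start of processing row $k$, the column-$c^*$ cells of $W$ strictly above row $k$ occupy exactly the target rows of $D$ in column $c^*$ that exceed $k$, and the not-yet-placed cells form a contiguous block hanging downward from row $k$. This is what guarantees both that $(k,c^*)\in D$ implies $(k,c^*)\in W$ when row $k$ is reached, and that an extra cell $(k,c^*)$, when dropped, lands exactly one position below the block rather than overshooting a target row whose cell has not yet been installed. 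The invariant is true and follows by a short induction on $k$, but it is not delivered by ``the forward direction applied to $W$,'' and Lemma~\ref{lem:strucprophelp} does not apply here either, since that lemma constrains elements of an interval $[D_2,D_1]$ and invoking it would presuppose $D\preceq W$, which is the very thing being proved. With that invariant added, your argument closes; as written, the termination and well-definedness of the inner loop are not fully justified.
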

\begin{proof}
    Assume that $D$ is a hook diagram. Then there exists $r_1\le r_2\in\mathbb{Z}_{>0}$ and $C=\{c_1,\cdots,c_m\}\subset \mathbb{Z}_{>0}$ such that $D\in KD(H(r_1,r_2;C))$. Evidently, if $c^*=\max C$, then properties (i) and (ii) hold in $D$. It remains to show that properties (iii) and (iv) also hold in $D$. We show that $D$ satisfies (iii), as (iv) follows via a similar argument. For a contradiction, assume that $D$ does not satisfy property (iii). Ongoing, for $\tilde{D}\in KD(H(r_1,r_2;C))$, we let $r^*(\tilde{D})=\max\{r~|~(r,c^*)\in \tilde{D}\}$. Since $D$ does not satisfy property (iii), there exists $(r,c)\in D$ such that $c<c^*$ and $r<r^*(D)$. Consequently, since $(r_2,c)\in H(r_1,r_2;C)$ and $r^*(H(r_1,r_2;C))=r_2$, it follows that there must exist $D_1,D_2\in KD(H(r_1,r_2;C))$ such that $(\tilde{r}_1,c)\in D_1$ with $\tilde{r}_1\ge r^*(D_1)$, $(\tilde{r}_2,c)\in D_2$ with $\tilde{r}_2<r^*(D_2)$, and $D_2$ can be formed from $D_1$ by applying a single Kohnert move; that is, in forming $D$ from $H(r_1,r_2;C)$, there must be a point at which a diagram that does not satisfy property (iii) in column $c$, namely $D_2$, is formed from one that does, namely $D_1$. Now, because $\tilde{r}_1\geq r^*(D_1)\geq r^*(D_2)>\tilde{r}_2$ and $D_2$ is obtained from $D_1$ by applying a single Kohnert move, it follows that $D_2=D_1\ldownarrow^{(\tilde{r}_1,c)}_{(\tilde{r}_2,c)}$. Further, it must be the case that $\tilde{r}_2=\tilde{r}_1-1$ since all diagrams in $KD(H(r_1,r_2;C))$ contain a single cell in column $c.$ However, this implies that $\tilde{r}_1=r^*(D_1)=r^*(D_2);$ that is, $(\tilde{r}_1,c)$ is not the rightmost cell in row $\tilde{r}_1$ of $D_1,$ contradicting that $D_2=D_1\ldownarrow^{(\tilde{r}_1,c)}_{(\tilde{r}_2,c)}.$
    Thus, $D$ must satisfy property (iii). For property (iv), one can use almost the exact same argument as that given above, replacing $(\tilde{r}_1,c),(r^*(D_1),c^*)\in D_1$ and $(\tilde{r}_2,c),(r^*(D_2),c^*)\in D_2$ with $(\tilde{r}_1,\tilde{c}_1),(\tilde{r}'_1,\tilde{c}_2)\in D_1$ and $(\tilde{r}_2,\tilde{c}_1),(\tilde{r}'_2,\tilde{c}_2)\in D_2$, respectively, where $\tilde{c}_1<\tilde{c}_2$, $\tilde{r}_1\ge\tilde{r}'_1$, and $\tilde{r}_2<\tilde{r}'_2$.

    Now, for the backward direction, assume that $D$ contains $m>1$ nonempty columns and that column $c^*$ of $D$ contains more than one cell; the cases where $m=1$ and/or nonempty columns of $D$ contain exactly one cell follow via similar -- but simpler -- arguments. Let $r^1_1\le\cdots\le r^1_{n_1}$ be the nonempty rows of columns $\tilde{c}<c^*$ of $D$ and $r^2_1<\cdots<r^2_{n_2}$ be the nonempty rows of $D$ in column $c^*$. Note that $r^1_{1}\ge r^2_{n_2}$ by condition (iii). If $C$ denotes the set of nonempty columns of $D$, then we claim that $D\in KD(H(r^1_{n_1}-n_2+1,r_{n_1}^1;C))$. To see this, note that we can form $D$ from $H(r^1_{n_1}-n_2+1,r_{n_1}^1;C)$ as follows. 
    \begin{enumerate}
        \item For $1\le i\le n_2$ in increasing order, if $r^1_{n_1}-n_2+i\neq r^2_i$, then apply in succession one Kohnert move at rows $r^1_{n_1}-n_2+i$ down to $r^2_i+1$ in decreasing order; otherwise, apply no Kohnert moves.
        \item For $1\le i\le n_1$ in increasing order, if $r^1_{n_1}\neq r^1_i$, then apply in succession one Kohnert move at rows $r^1_{n_1}$ down to $r^1_i+1$ in decreasing order; otherwise apply no Kohnert moves.
    \end{enumerate}
    Thus, $D\in KD(H(r^1_{n_1}-n_2+1,r_{n_1}^1;C))$ and the result follows.
\end{proof}



Theorem~\ref{thm:hook} below is the main result of this section, and its proof -- along with some noteworthy consequences regarding the corresponding Kohnert polynomials -- constitutes the remainder of this section.


\begin{theorem}\label{thm:hook}~
    \begin{enumerate}
        \item[\textup{(a)}] Let $D$ be a diagram for which all nonempty columns contain exactly one cell. Then $\mathcal{P}(D)$ is \textup(EL-\textup) shellable if and only if $D\backslash\{(1,\tilde{c})~|~\tilde{c}>0\}$ is a hook diagram.
        \item[\textup{(b)}] Let $D$ be a diagram for which the first two rows are empty. Then $\mathcal{P}(D)$ is \textup(EL-\textup)shellable if and only if $D$ is a hook diagram.
    \end{enumerate}
\end{theorem}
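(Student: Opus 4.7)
The plan is to prove each direction of each part separately, using the structural obstructions (Propositions~\ref{prop:strucasc} and~\ref{prop:strucblock}, Corollary~\ref{cor:strucblock}) for necessity and constructing an explicit EL-labeling for sufficiency. Proposition~\ref{prop:formhook} provides the cell-distribution characterization of hook diagrams that I rely on throughout.

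For the necessity direction of (a), since $D$ has one cell per nonempty column, conditions (i) and (ii) of Proposition~\ref{prop:formhook} hold automatically for $D\setminus\{(1,c):c>0\}$, so failure to be a hook diagram must come from (iii) or (iv). Each case produces cells $(\tilde{r}_1,\tilde{c}_1),(\tilde{r}_2,\tilde{c}_2)\in D$ with $\tilde{c}_1<\tilde{c}_2$ and $2\le \tilde{r}_1<\tilde{r}_2$ -- an \emph{ascending pair} above row 1. The key observation is that in a one-cell-per-column diagram, cells can be lowered within their own columns by processing columns from right to left. Using this, I would construct $D^*\in\mathcal{P}(D)$ with $(\tilde{r}_1,\tilde{c}_1),(\tilde{r}_1+1,\tilde{c}_2)\in D^*$ and all other cells in columns strictly between $\tilde{c}_1,\tilde{c}_2$ or to the right of $\tilde{c}_2$ lowered to row 1. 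Then $D^*$ meets the hypotheses of Proposition~\ref{prop:strucasc} (with $r=\tilde{r}_1-1$, $c_1=\tilde{c}_1$, $c_2=\tilde{c}_2$), giving non-shellability. For the necessity direction of (b), when $D$ has empty rows 1 and 2 and violates Proposition~\ref{prop:formhook}, I split on which condition fails: failure of (ii) (some column $c\ne c^*$ has multiple cells) lets me construct $D^*$ matching Corollary~\ref{cor:strucblock} after clearing obstructing cells, while failure of (iii) or (iv) again yields an ascending pair and the argument from part (a) carries over via Proposition~\ref{prop:strucasc}. The two empty bottom rows ensure enough vertical room below the offending pattern for the required lowering moves.

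For the sufficiency directions, I would construct an explicit EL-labeling $\lambda:\mathcal{E}(\mathcal{P}(D))\to\mathbb{Z}$ when $D$ has the appropriate hook structure. A natural candidate labels each covering relation by the column of the moved cell, refined by the starting row to break ties. For a hook diagram $H(r_1,r_2;C)$, Kohnert moves decompose into \emph{arm moves} (lowering cells in row $r_2$ at columns $<c^*$) and \emph{leg moves} (lowering cells in column $c^*$), interacting only through the leg column; the unique rising chain in each interval processes arm columns in a canonical order with leg moves interleaved at the moments the leg cell blocks an arm cell. For part (a), row-1 cells are frozen and contribute no covering relations, so $\mathcal{P}(D)\cong\mathcal{P}(D\setminus\{(1,c):c>0\})$ and the labeling descends accordingly. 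The main obstacle is this sufficiency direction: verifying uniqueness of the rising chain and its lex-minimality across arbitrary intervals requires a careful case analysis of how arm and leg Kohnert moves compose, with the interaction near the leg column being the delicate point.
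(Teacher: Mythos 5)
Your overall strategy matches the paper's (ascending-pair obstructions via Proposition~\ref{prop:strucasc} for necessity, an explicit cell-based EL-labeling for sufficiency), but there is a genuine gap in your necessity argument for part (b). You route every failure of condition (ii) of Proposition~\ref{prop:formhook} --- some column $c<c^*$ containing more than one cell --- through Corollary~\ref{cor:strucblock}. That corollary requires row $r+2$ of the witnessing diagram to contain at least three cells weakly to the right of the doubled column: one at $c_1$, one at $c_2$, and at least one strictly between them. Since Kohnert moves preserve the number of cells in each column, no diagram in $\mathcal{P}(D)$ can exhibit this pattern unless $D$ has at least two nonempty columns strictly to the right of the doubled column. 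For $D=\{(4,1),(5,1),(3,2)\}$ (first two rows empty, column $1$ doubled, $c^*=2$, and conditions (iii) and (iv) of Proposition~\ref{prop:formhook} satisfied), Corollary~\ref{cor:strucblock} is therefore never applicable, yet $D$ is not a hook diagram and $\mathcal{P}(D)$ must still be shown non-shellable. The paper avoids this entirely: Lemmas~\ref{lem:infrontblock} and~\ref{lem:onecol} handle all violations of condition (ii) using only Proposition~\ref{prop:strucasc}, by first separating the two cells of the doubled column vertically (pushing the lower one down into the empty bottom rows) so that it forms an ascending pair with a cell in a column to its right. You need an argument of that kind; Corollary~\ref{cor:strucblock} alone cannot cover this case.

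Two smaller points. In part (a) you place the ascending pair inside $D$ itself and take $r=\tilde r_1-1$; when $\tilde r_1=2$ and $D$ has a cell at $(1,\tilde c_1)$ or $(1,\tilde c_2)$, the hypotheses $(r,c_1)\notin D^*$ and $(r,c_2)\notin D^*$ of Proposition~\ref{prop:strucasc} can fail, so you must first pass to $D\backslash\{(1,\tilde c)~|~\tilde c>0\}$ and invoke Lemma~\ref{lem:0bpciso}, as the paper does. For sufficiency, your proposed edge labeling (column of the moved cell, refined by its current row) differs from the paper's, which attaches a fixed label to each cell and transports that label along covering relations; the paper's choice makes the multiset of labels on every maximal chain of a given interval identical, from which uniqueness and lexicographic minimality of the rising chain follow quickly. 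Your row-refined labels change as a leg cell descends, so the verification you defer is harder than the sketch suggests, and I would recommend adopting the transported-label scheme.
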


Before proceeding with the proof of Theorem~\ref{thm:hook}, we include the following lemma which relates the Kohnert posets of the two diagrams occurring in Theorem~\ref{thm:hook} (a).

\begin{lemma}\label{lem:0bpciso}
    If $D$ is a diagram for which all nonempty columns contain exactly one cell, then $\mathcal{P}(D)\cong \mathcal{P}(D\backslash\{(1,\tilde{c})~|~\tilde{c}>0\})$.
\end{lemma}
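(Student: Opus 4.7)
The plan is to construct an explicit order isomorphism. Let $R = \{(1,\tilde{c}) ~|~ (1,\tilde{c}) \in D\}$ denote the row-$1$ cells of $D$, so that $D \setminus R$ is the diagram on the right of the claimed isomorphism. I would define $\phi : \mathcal{P}(D \setminus R) \to \mathcal{P}(D)$ by $\phi(T) = T \cup R$, with proposed inverse $T \mapsto T \setminus R$.

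The crux is the invariant that the property ``every nonempty column contains exactly one cell'' is preserved by every Kohnert move, since a move only rearranges cells within a single column. Hence for every $T \in \mathcal{P}(D)$ and every $(1,c) \in R$, the column $c$ of $T$ contains only the cell $(1,c)$. This renders the cells of $R$ \emph{inert}: they cannot move (being already in row~$1$) and no other cell can ever descend into their column, since those columns contain no cells above row~$1$ at any stage of the Kohnert process. In particular, Kohnert moves applied at row $1$ are trivial, and the cells of $R$ survive unchanged in every descendant of $D$.

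From inertness it follows by a short induction on sequence length that any sequence of Kohnert moves applicable to $D \setminus R$ is applicable to $D$ producing a diagram that differs by exactly $R$, and conversely. At each step the ``rightmost cell in row $r$'' and the ``first empty position below'' coincide in the two diagrams, since they agree outside $R$ and the columns of $R$ contain no other cells. This shows both that $\phi$ is well-defined and that the proposed inverse $T \mapsto T \setminus R$ lands in $\mathcal{P}(D \setminus R)$, so $\phi$ is a bijection. Because covering relations in both posets are realized by single Kohnert moves, the same step-by-step correspondence yields that $\phi$ both preserves and reflects $\preceq$, giving the desired isomorphism. The only substantive point is establishing inertness of $R$; the remainder is bookkeeping, so I do not anticipate a major obstacle.
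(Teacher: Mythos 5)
Your proof is correct: the inertness of the row-one cells $R$ (cells never change columns, a column of $R$ contains nothing above row $1$, and no position exists below row $1$) is exactly the observation needed, and the step-by-step correspondence of Kohnert moves at rows $r>1$ then gives the order isomorphism. The paper simply declares this lemma ``immediate from the definitions,'' so your argument supplies precisely the details the authors omit, with no divergence in approach.
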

\begin{proof}
    Immediate from the definitions of Kohnert move and Kohnert poset.
\end{proof}

\subsection{Sufficiency}\label{sec:suffhook}

In this section, we establish the backward directions of Theorem~\ref{thm:hook} (a) and (b). Considering Lemma~\ref{lem:0bpciso}, it suffices to show that if $D$ is a hook diagram, then $\mathcal{P}(D)$ is EL-shellable. 

To show that hook diagrams generate EL-shellable Kohnert posets, we first establish that such posets are bounded.

\begin{lemma}\label{lem:hookbound}
    If $D$ is a hook diagram, then $\mathcal{P}(D)$ is bounded. 
\end{lemma}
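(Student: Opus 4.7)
The plan is to pin down the unique minimum of $\mathcal{P}(D)$ by a finiteness-plus-uniqueness argument rather than by explicitly constructing a descending chain of Kohnert moves. Writing $D \in KD(H(r_1, r_2; C))$ with $C = \{c_1 < \cdots < c_m\}$, $c^* := c_m$, and $n_2 := r_2 - r_1 + 1$ (the number of cells in column $c^*$), my candidate minimum is the hook diagram $D_{\min} := H(1, n_2; C)$. Since $D$ is trivially the maximum of $\mathcal{P}(D)$ and $\mathcal{P}(D)$ is finite, it has at least one minimum; thus it suffices to show that every minimum of $\mathcal{P}(D)$ coincides with $D_{\min}$.

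To that end, I would start by observing that every $E \in \mathcal{P}(D)$ has the same column cell counts as $D$ (Kohnert moves preserve column counts) and belongs to $KD(H(r_1, r_2; C))$, so $E$ is itself a hook diagram. Proposition~\ref{prop:formhook} then locates the cells of $E$: column $c^*$ contains $n_2$ cells at rows $s^2_1 < \cdots < s^2_{n_2}$, and each column $c_i$ with $i < m$ contains a single cell at some row $s^1_{c_i}$, satisfying $s^1_{c_1} \ge \cdots \ge s^1_{c_{m-1}} \ge s^2_{n_2}$. If $E$ is a minimum, no nontrivial Kohnert move is available, and I would apply this at two different families of rows. At row $s^2_{n_2}$ the rightmost cell is $(s^2_{n_2}, c^*)$; tracing the trajectory of this cell under a Kohnert move forces column $c^*$ to be contiguous and anchored at the bottom, i.e., $s^2_i = i$ for all $i$, and in particular $s^2_{n_2} = n_2$. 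If some $s^1_{c_j}$ exceeded $n_2$, property (iv) of Proposition~\ref{prop:formhook} would identify the rightmost cell in row $s^1_{c_j}$ as sitting in a column left of $c^*$ with only one cell, and a Kohnert move at that row would drop this cell by one, contradicting minimality; combined with $s^1_{c_j} \ge n_2$, this pins $s^1_{c_j} = n_2$ for every $j < m$, so $E = D_{\min}$.

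The main obstacle is correctly identifying the rightmost cell in each relevant row of $E$ under the hook constraints, but this is streamlined by the weakly decreasing pattern on row heights imposed by property (iv) of Proposition~\ref{prop:formhook}, together with the observation that no cell of column $c^*$ appears in rows strictly above $s^2_{n_2}$. Once uniqueness of the minimum is established, boundedness follows immediately: finiteness of $\mathcal{P}(D)$ forces a minimum to exist, and it must equal $D_{\min}$.
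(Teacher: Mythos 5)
Your proof is correct, and it reaches the conclusion by a genuinely different route than the paper. The paper's proof is constructive: for an arbitrary $\tilde{D}\in\mathcal{P}(H(r_1,r_2;C))$ it writes down, using the structural description of Proposition~\ref{prop:formhook}, an explicit sequence of Kohnert moves carrying $\tilde{D}$ to $D_{min}=H(1,r_2-r_1+1;C)$, thereby showing directly that $D_{min}\preceq\tilde{D}$ (and it splits off the degenerate cases $|C|=1$ or $r_1=r_2$ by citing an earlier result). You instead characterize the minimal elements: a minimal element admits no nontrivial Kohnert move, and you show the constraints of Proposition~\ref{prop:formhook} -- triviality of the move at the top row of column $c^*$ forces that column to be bottom-justified, and triviality at the row of each singleton column (whose cell is rightmost in its row by property (iv)) forces that row down to $n_2$ -- leave $H(1,n_2;C)$ as the only possibility; uniqueness of the minimal element plus finiteness then yields the minimum. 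Your argument is uniform across the degenerate cases and avoids the bookkeeping of an explicit move sequence, at the cost of not exhibiting the descending chain itself (which the paper's construction provides and which is in the spirit of the explicit chains used elsewhere in Section~4). Two small points of hygiene: the inequality $s^1_{c_{m-1}}\ge s^2_{n_2}$ comes from property (iii) of Proposition~\ref{prop:formhook} rather than property (iv), and you should say "minimal element" where you write "minimum" before uniqueness has been established; neither affects the validity of the argument.
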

\begin{proof}
    It suffices to establish the result for $D=H(r_1,r_2;C)$ with $r_1\le r_2\in\mathbb{Z}_{>0}$ and $C=\{c_1,\cdots,c_m\}~\subset~\mathbb{Z}_{>0}$. We claim that $D_{min}=H(1,r_2-r_1+1;C)$ is the unique minimal element of $\mathcal{P}(D)$. To see this, take $\tilde{D}\in \bl{\mathcal{P}}(H(r_1,r_2;C))$. If $|C|=1$ or $r_1=r_2$, then the result follows by Corollary 6.2 of \textbf{\cite{KPoset1}}. So, assume that $|C|=m>1$, $r_2>r_1$, and $c_1<\cdots<c_m$. Considering Proposition~\ref{prop:formhook}, there exists $\tilde{r}_{m-1}\ge\cdots\ge \tilde{r}_1$ and $r^*_1<\cdots<r^*_{r_2-r_1+1}$ such that $$\tilde{D}=\{(\tilde{r}_{m-i},c_i)~|~1\le i\le m-1\}\cup\{(r^*_i,c_m)~|~1\le i\le r_2-r_1+1\}.$$ Note that one can form $D_{min}$ from $\tilde{D}$ by applying successively
    \begin{enumerate}
        \item[1)] for $1\le i\le r_2-r_1+1$ in increasing order, one Kohnert move at rows $r^*_i$ through $i+1$ in decreasing order, followed by
        \item[2)] for $1\le i\le m-1$ in increasing order, one Kohnert move at rows $\tilde{r}_i$ through $r_2-r_1+2$ in decreasing order.
    \end{enumerate}
    Consequently, $D_{min}\preceq \tilde{D}$. As $\tilde{D}$ was arbitrary, the claim and, hence, the result follows.
\end{proof}

\begin{ex}\label{ex:hookmin}
    In Figure~\ref{fig:hookmin} below we illustrate the unique minimal element $D_{min}$ of the Kohnert posets associated with the two hook diagrams of Example~\ref{ex:hook}.

    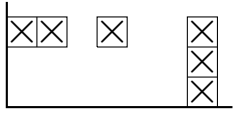
\begin{figure}[H]
        \centering
        $$\begin{tikzpicture}[scale=0.4]
    \draw[thick] (7.5,0)--(0,0)--(0,3.5);
        \draw (0,2)--(2,2)--(2,3)--(0,3);
        \draw (1,2)--(1,3);
        \node at (1.5,2.5){$\bigtimes$};
        \node at (0.5,2.5){$\bigtimes$};

        \draw (3,2)--(4,2)--(4,3)--(3,3)--(3,2);
        \node at (3.5,2.5){$\bigtimes$};

        \draw (7,0)--(7,3)--(6,3)--(6,0);
        \draw (6,1)--(7,1);
        \draw (6,2)--(7,2);
        \node at (6.5,0.5){$\bigtimes$};
        \node at (6.5,1.5){$\bigtimes$};
        \node at (6.5,2.5){$\bigtimes$};
\end{tikzpicture}$$
        \caption{Minimal element associated with a hook diagram}
        \label{fig:hookmin}
    \end{figure}
\end{ex}



In order to establish that Kohnert posets associated with hook diagrams are EL-shellable, we describe an edge labeling that satisfies Definition~\ref{def:EL}. First, however, we require the following lemma.

\begin{lemma}\label{lem: hook covering} Let $D$ be a hook diagram. If $D'=D\ldownarrow^{(r,c)}_{(r-k,c)}$ and $D'\precdot D$, then $k=1$.
\end{lemma}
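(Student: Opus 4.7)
The plan is to split on whether the moved cell lies in the distinguished rightmost column $c^{*}$ of the hook diagram (the unique column of $D$ that may contain more than one cell) or in some column strictly to the left of $c^{*}$. Both subcases will fall out quickly once the structural characterization of hook diagrams from Proposition~\ref{prop:formhook} is combined with the covering criterion from Corollary~\ref{cor:cover}, which tells us that $D'\precdot D$ if and only if every intermediate row $\tilde{r}$ with $r-k<\tilde{r}<r$ contains some cell in a column strictly to the right of $c$.

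First I would handle the case $c=c^{*}$. Property (i) of Proposition~\ref{prop:formhook} guarantees that every column strictly to the right of $c^{*}$ is empty in every diagram of $\mathcal{P}(D)$ (since Kohnert moves cannot create cells in new columns). Hence there can be no cell $(\tilde{r},\tilde{c})\in D$ with $\tilde{c}>c^{*}$, so the covering criterion from Corollary~\ref{cor:cover} forces the set $\{\tilde{r}\mid r-k<\tilde{r}<r\}$ to be empty. This gives $k\le 1$, and since $k\ge 1$ for any nontrivial Kohnert move, we conclude $k=1$.

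Next I would handle the case $c<c^{*}$. Here property (ii) of Proposition~\ref{prop:formhook} says that column $c$ of $D$ contains at most one cell, namely $(r,c)$ itself. Consequently $(r-1,c)\notin D$, so the first empty position below $(r,c)$ in column $c$ is $(r-1,c)$; the Kohnert move at row $r$ therefore produces $D\ldownarrow^{(r,c)}_{(r-1,c)}$, giving $k=1$ directly (independently of whether this move actually yields a covering relation).

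I do not expect any serious obstacle: the lemma is essentially bookkeeping once Proposition~\ref{prop:formhook} is in hand. The only mildly delicate point is noting that the notation $D\ldownarrow^{(r,c)}_{(r-k,c)}$ implicitly requires $(r,c)$ to be the rightmost cell of row $r$ in $D$, which is needed so that properties (i) and (ii) apply to the correct column in each case.
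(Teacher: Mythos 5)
Your proof is correct. It differs from the paper's in organization rather than in substance: the paper argues by contradiction, assuming $k\ge 2$, deducing from the mechanics of the Kohnert move that $(r-1,c)\in D$, hence (via parts (ii) and (i) of Proposition~\ref{prop:formhook}) that $c$ is the rightmost nonempty column, and then explicitly exhibiting the intermediate diagram $D\ldownarrow^{(r-1,c)}_{(r-2,c)}$ that refines the purported covering relation. You instead split on whether $c$ is the distinguished column $c^*$: when $c=c^*$ you invoke the already-established covering criterion of Corollary~\ref{cor:cover}, which immediately rules out any intermediate rows because no cell of $D$ lies to the right of $c^*$; when $c<c^*$ you observe that column $c$ holds only the single cell $(r,c)$, so the Kohnert move can only drop it one row, independently of the covering hypothesis. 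Your route is slightly more economical in that it reuses Corollary~\ref{cor:cover} rather than reconstructing the refining chain by hand, and it avoids the paper's ``without loss of generality $k=2$'' shortcut; the paper's version has the minor virtue of being self-contained at that point in the text. Both arguments rest on exactly the same structural facts about hook diagrams, so there is no gap in either.
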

\begin{proof} Assume otherwise. Without loss of generality, let $D'=D\ldownarrow^{(r,c)}_{(r-k,c)}$ with $k=2.$ Note that, since $D'\precdot D$, it must be the case that $D'$ can only be formed from $D$ by applying exactly one Kohnert move. Consequently, it follows that $(r-1,c)\in D$. Now, since $(r,c),(r-1,c)\in D$, we may conclude that $c$ is the unique column in $D$ with more than one cell. So, by condition (i) of Proposition~\ref{prop:formhook}, for all $\tilde{c}>c,$ column $\tilde{c}$ of $D$ is empty. Thus, $D'$ can be formed from $D$ by applying a single Kohnert move at row $r-1$ followed by a single Kohnert move at row $r;$ that is $$D'\prec D\ldownarrow^{(r-1,c)}_{(r-2,c)}\precdot D,$$ a contradiction. The result follows.

\end{proof}

To define our edge labelings for Kohnert posets arising from hook diagrams, we utilize the following labeling of the cells in the associated diagrams. Let $D$ be a hook diagram where $C=\{c_1,\dots,c_m\}$ is the set of nonempty columns in $D$ with $c_1<\dots<c_m,$ and let $R=\{r_i~|~(r_i,c_m)\in D\text{ for }1\leq i\leq n\}$ with $r_1>\dots>r_n$. Decorate the cell located in column $c_j$ of $D$ with the label $j$ for $1\leq j<m$,  then decorate the cell in column $c_m$ and row $r_i$ of $D$ with the label $m-i+1$ for $1\leq i\leq n$. To extend this labeling to remaining diagrams of $\mathcal{P}(D)$, if $D_1,D_2\in\mathcal{P}(D)$ are such that 
\begin{equation}\label{eq:3}
    D_2=D_1\ldownarrow^{(r,c)}_{(r-1,c)}\precdot D_1
\end{equation}
and the label of $(r,c)\in D_1$ is $L$, then decorate the cell $(r-1,c)\in D_2$ with the label $L$ and maintain the labels of the cells in $D_1\cap D_2$. Now, we define our edge labeling $\lambda:\mathcal{E}(\mathcal{P}(D))\to\mathbb{Z}_{>0}$ by $\lambda(D_2,D_1)=L$ if (\ref{eq:3}) holds and the label of $(r,c)\in D_1$ is $L$. See Example~\ref{ex:shelling}.

\begin{ex}\label{ex:shelling}
In Figure~\ref{fig:shelling} below we illustrate our cell decoration and edge labeling on an interval $[D_2,D_1]\subset\mathcal{P}(D),$ where $D$ is a hook diagram.

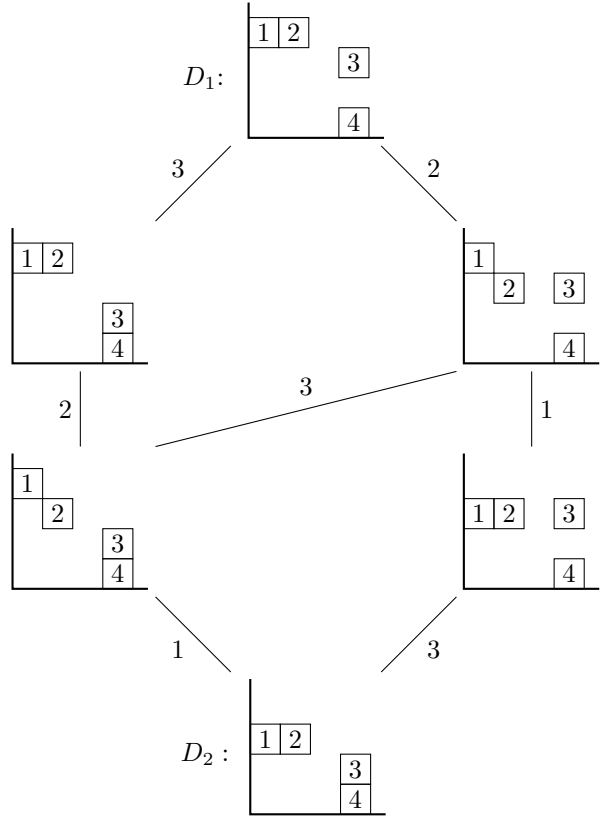
\begin{figure}[H]
$$\begin{tikzpicture}

    \node at (0,4) {\begin{tikzpicture}[scale=0.4]
    \draw[thick] (4.5,0)--(0,0)--(0,4.5);
    \draw (0,3) rectangle (1,4);
    \draw (1,3) rectangle (2,4);
    \draw (3,2) rectangle (4,3);
    \draw (3,0) rectangle (4,1);

    \node at (0.5,3.5) {1};
    \node at (1.5,3.5) {2};
    \node at (3.5,2.5) {3};
    \node at (3.5,0.5) {4};

    \node at (-1.5,2){$D_1$:};
    \node at (6,2){};
        
\end{tikzpicture}};

    \node at (-3,1) {\begin{tikzpicture}[scale=0.4]
    \draw[thick] (4.5,0)--(0,0)--(0,4.5);
    \draw (0,3) rectangle (1,4);
    \draw (1,3) rectangle (2,4);
    \draw (3,1) rectangle (4,2);
    \draw (3,0) rectangle (4,1);

    \node at (0.5,3.5) {1};
    \node at (1.5,3.5) {2};
    \node at (3.5,1.5) {3};
    \node at (3.5,0.5) {4};
        
\end{tikzpicture}};

    \node at (3,1) {\begin{tikzpicture}[scale=0.4]
    \draw[thick] (4.5,0)--(0,0)--(0,4.5);
    \draw (0,3) rectangle (1,4);
    \draw (1,2) rectangle (2,3);
    \draw (3,2) rectangle (4,3);
    \draw (3,0) rectangle (4,1);

    \node at (0.5,3.5) {1};
    \node at (1.5,2.5) {2};
    \node at (3.5,2.5) {3};
    \node at (3.5,0.5) {4};
        
\end{tikzpicture}};

\node at (-3,-2) {\begin{tikzpicture}[scale=0.4]
    \draw[thick] (4.5,0)--(0,0)--(0,4.5);
    \draw (0,3) rectangle (1,4);
    \draw (1,2) rectangle (2,3);
    \draw (3,1) rectangle (4,2);
    \draw (3,0) rectangle (4,1);

    \node at (0.5,3.5) {1};
    \node at (1.5,2.5) {2};
    \node at (3.5,1.5) {3};
    \node at (3.5,0.5) {4};
        
\end{tikzpicture}};

\node at (3,-2) {\begin{tikzpicture}[scale=0.4]
    \draw[thick] (4.5,0)--(0,0)--(0,4.5);
    \draw (0,2) rectangle (1,3);
    \draw (1,2) rectangle (2,3);
    \draw (3,2) rectangle (4,3);
    \draw (3,0) rectangle (4,1);

    \node at (0.5,2.5) {1};
    \node at (1.5,2.5) {2};
    \node at (3.5,2.5) {3};
    \node at (3.5,0.5) {4};
        
\end{tikzpicture}};

\node at (0,-5) {\begin{tikzpicture}[scale=0.4]
    \draw[thick] (4.5,0)--(0,0)--(0,4.5);
    \draw (0,2) rectangle (1,3);
    \draw (1,2) rectangle (2,3);
    \draw (3,1) rectangle (4,2);
    \draw (3,0) rectangle (4,1);

    \node at (0.5,2.5) {1};
    \node at (1.5,2.5) {2};
    \node at (3.5,1.5) {3};
    \node at (3.5,0.5) {4};

    \node at (-1.5,2) {$D_2:$};
    \node at (6,2) {};
        
\end{tikzpicture}};

\draw (-1,3)--(-2,2);
\draw (1,3)--(2,2);

\draw (-3,0)--(-3,-1);
\draw (3,0)--(3,-1);
\draw (2,0)--(-2,-1);

\draw (-2,-3)--(-1,-4);
\draw (2,-3)--(1,-4);

\node at (-1.7,2.7) {3};
\node at (1.7, 2.7) {2};

\node at (-3.2,-0.5) {2};
\node at (0,-0.2) {3};
\node at (3.2,-0.5) {1};

\node at (-1.7,-3.7) {1};
\node at (1.7,-3.7) {3};

\end{tikzpicture}$$
\caption{Edge-labeled interval of $\mathcal{P}(D)$}
\label{fig:shelling}
\end{figure}
\end{ex}

\begin{lemma}\label{lem: hook multiset} Let $D$ be a hook diagram and $I=[D_2,D_1]$ be an interval in $\mathcal{P}(D)$ equipped with the edge labeling described above. If $\mathcal{C}_1,\mathcal{C}_2$ are maximal chains in $I$, then the multiset of edge labels corresponding to $\mathcal{C}_1$ is equal to that of $\mathcal{C}_2$.
\end{lemma}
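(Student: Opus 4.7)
The plan is to show that the multiset of edge labels along any maximal chain in $I=[D_2,D_1]$ is determined solely by $D_1$ and $D_2$, from which the lemma is immediate. My first task would be to verify that the recursive edge labeling is path-independent and in fact coincides with an intrinsic labeling of every $\tilde{D}\in\mathcal{P}(D)$: decorate the unique cell in column $c_j$ (for $1\le j<m$) with $j$, and decorate the cells in column $c_m$ from top to bottom with $m,m-1,\dots,m-n+1$. Since Kohnert moves preserve columns, columns $c_j$ with $j<m$ retain their single cell (and hence the label $j$) throughout $\mathcal{P}(D)$. For column $c_m$, Lemma~\ref{lem: hook covering} guarantees that every covering relation in $\mathcal{P}(D)$ is a one-row move, and in such a move the cell at $(r,c_m)$ slides into the empty slot $(r-1,c_m)$ directly below it, so the vertical ordering of the $c_m$-cells --- and therefore their top-to-bottom labels --- is unchanged.

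With the intrinsic labeling in place, fix a label $L\in\{1,\dots,m\}$ and let $c_L$ be the column in which the cell labeled $L$ sits in every diagram of $\mathcal{P}(D)$. Let $\rho^{(i)}_L$ denote the row of that cell in $D_i$ for $i=1,2$; since Kohnert moves only lower cells, $\rho^{(2)}_L\le\rho^{(1)}_L$. Now take an arbitrary maximal chain $\mathcal{C}$ in $I$. By Lemma~\ref{lem: hook covering}, each covering step in $\mathcal{C}$ is a single one-row Kohnert move, and the edge labeling records the label of the unique cell that shifts. Hence the multiplicity of $L$ in the multiset of edge labels of $\mathcal{C}$ equals the total number of one-row descents executed on the cell labeled $L$ along $\mathcal{C}$. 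Because this cell starts at row $\rho^{(1)}_L$, ends at row $\rho^{(2)}_L$, and each move strictly decreases its row by exactly one (with no move ever raising a cell), this count is exactly $\rho^{(1)}_L-\rho^{(2)}_L$.

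The right-hand side depends only on $D_1$ and $D_2$, not on $\mathcal{C}$, so every maximal chain in $I$ produces a multiset in which label $L$ has multiplicity $\rho^{(1)}_L-\rho^{(2)}_L$; in particular, $\mathcal{C}_1$ and $\mathcal{C}_2$ have equal multisets of edge labels. The main (and essentially the only) technical obstacle is justifying the well-definedness of the labeling, which reduces to the observation that a one-row covering move in column $c_m$ slides the moved cell into a vacant slot immediately beneath it --- an observation that relies on Lemma~\ref{lem: hook covering} together with the fact that Kohnert moves preserve columns. Everything else is the bookkeeping of monotone descent in fixed columns.
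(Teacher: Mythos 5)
Your proof is correct and follows essentially the same route as the paper's: invoke Lemma~\ref{lem: hook covering} to see that every covering edge records a single cell descending exactly one row, so the number of edges labeled $L$ in any maximal chain of $[D_2,D_1]$ equals the total row-descent of the cell labeled $L$ between $D_1$ and $D_2$, a quantity independent of the chain. The extra care you take in checking that the labeling is intrinsic (columns are preserved and cells in column $c_m$ cannot pass one another) is a reasonable elaboration of what the paper leaves implicit, not a different argument.
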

\begin{proof}

Let $\mathcal{C}_1,\mathcal{C}_2$ be two maximal chains in $I$. If $(r,c)\in D_1$ has label $L$ and is moved, via a sequence of Kohnert moves, to position $(r-k,c)\in D_2$ for some $k\in\mathbb{Z}_{>0}$, then it follows from Lemma~\ref{lem: hook covering} that among the edges of $\mathcal{C}_1$ and $\mathcal{C}_2$ in the Hasse diagram of $\mathcal{P}(D)$, exactly $k$ are labeled $L$. Since $(r,c)\in D_1$ was arbitrary, the result follows.
\end{proof}

\begin{remark}\label{rem:unique}
As a consequence of Lemma~\ref{lem: hook covering}, given an interval $I=[D_2,D_1]\subseteq\mathcal{P}(D)$, each maximal chain in $I$ has its own unique ordered list of edge labels. To see this, note that each edge label in a given chain in $I$ is defined by a particular cell being moved down one row. Thus, if two chains have the same ordered list of edge labels, then they correspond to the same sequence of Kohnert moves, i.e., they are equal chains. Combining this observation with Lemma~\ref{lem: hook multiset}, we conclude that if $\mathcal{C}_1\neq\mathcal{C}_2$ are two maximal chains in $I,$ then the ordered list of edge labels corresponding to $\mathcal{C}_1$ is a nontrivial permutation of the ordered list of edge labels corresponding to $\mathcal{C}_2.$
\end{remark}

\begin{theorem}\label{thm:HrrCshell}
If $D$ is a hook diagram, then $\mathcal{P}(D)$ is EL-shellable.
\end{theorem}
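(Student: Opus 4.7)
The plan is to verify that the edge labeling $\lambda$ defined above is an EL-labeling of $\mathcal{P}(D)$. Since $\mathcal{P}(D)$ is bounded by Lemma~\ref{lem:hookbound}, EL-shellability in the sense of Definition~\ref{def:EL} makes sense. Lemma~\ref{lem: hook multiset} and Remark~\ref{rem:unique} together imply that, in any interval $[D_2,D_1]\subseteq \mathcal{P}(D)$, the multiset $M$ of edge labels along a maximal chain is fixed and that distinct maximal chains produce distinct orderings of $M$. Consequently, if I can exhibit in each such interval a single maximal chain whose edge labels read from $D_2$ up to $D_1$ are the non-decreasing rearrangement of $M$, that chain will automatically be the unique rising chain and, since the sorted arrangement of a multiset is the lex-smallest of its permutations, also the lex-minimal chain required by Definition~\ref{def:EL}.

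I would construct this rising chain greedily by working downward from $D_1$: at each stage apply the Kohnert move that drops the cell $X$ carrying the largest label $L$ among cells whose position in the current diagram differs from its position in $D_2$. The engine behind this procedure is the structural observation that in any diagram of $\mathcal{P}(D)$, the cells appearing in any single row are strictly increasing in label from left to right. This follows from the labeling of $D$ combined with Proposition~\ref{prop:formhook}: cells in horizontal columns $c_1,\dots,c_{m-1}$ carry labels matching the column order, while any cell of the rightmost column $c_m$ sitting in a given row is necessarily the rightmost cell of that row and carries the largest label in that row. Given this row-ordering, the cell $X$ chosen by the greedy step is rightmost in its row of the current diagram; for if some cell $Y$ lay strictly to the right of $X$ in the same row, $Y$ would carry a label $L'>L$, and maximality of $L$ would force $Y$ to already occupy its $D_2$ position. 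Since Kohnert moves only send cells downward, $Y$ could then never vacate that row along any chain to $D_2$, blocking $X$ from becoming rightmost and from ever descending, contradicting that $X$ still has to move. Hence the greedy Kohnert move exists, and by Lemma~\ref{lem: hook covering} together with Corollary~\ref{cor:cover} it is a covering relation of $\mathcal{P}(D)$.

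The main obstacle is to verify that after this greedy single step the resulting diagram $D_1'$ still lies in $[D_2,D_1]$, so that induction on the length of the interval proceeds cleanly. This reduces to showing that the remaining movement requirements (the multiset $M$ with one instance of $L$ removed) can still be realized as a chain from $D_1'$ down to $D_2$; I would handle this by invoking Lemma~\ref{lem:strucprophelp} to localize the comparison to the columns actually involved in the pending moves and then reapplying the row-ordering argument inside the smaller interval $[D_2,D_1']$. Iterating the greedy step then exhausts $M$ and produces a maximal chain from $D_1$ down to $D_2$ whose labels are non-increasing downward, equivalently non-decreasing upward. This chain is the unique rising chain by Remark~\ref{rem:unique} and is automatically lex-minimal as explained in the first paragraph, completing the verification of Definition~\ref{def:EL}.
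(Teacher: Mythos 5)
Your proposal is correct and follows essentially the same route as the paper: boundedness via Lemma~\ref{lem:hookbound}, then the fixed label multiset (Lemma~\ref{lem: hook multiset}) plus distinctness of orderings (Remark~\ref{rem:unique}) reduce everything to exhibiting one maximal chain with sorted labels, and your greedy one-row-at-a-time chain is exactly the paper's chain $\mathcal{C}^*$ that moves cells in decreasing order of label. Your verification that the greedy move exists (cells in a row increase in label left to right, so the chosen cell is rightmost) is if anything slightly more detailed than the paper's appeal to Proposition~\ref{prop:formhook}, and the remaining step you flag as the "main obstacle" is treated at the same level of detail in the paper itself.
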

\begin{proof}
Let $\mathcal{P}=\mathcal{P}(D)$ and assume that $D$ contains $n>k$ cells. By Lemma~\ref{lem:hookbound}, $\mathcal{P}$ is bounded. Thus, it remains to show that $\mathcal{P}$ admits an edge labeling that satisfies Definition~\ref{def:EL}. Decorate the cells of all $\widetilde{D}\in \mathcal{P}$ and label the Hasse diagram of $\mathcal{P}$ as described above. Take $D_1,D_2\in\mathcal{P}$ satisfying $D_2\prec D_1$. Assume that for $1\leq i\leq k$, if the cell in location $(r,c)$ of $D_1$ is decorated with label $L_i$, then the cell $(r-\mu_i,c)\in D_2$, $\mu_i> 0$, is decorated with label $L_i$; that is, $D_2$ is formed from $D_1$ by moving each cell in $D_1$ with label $L_i$ down $\mu_i$ rows. By Lemmas~\ref{lem: hook covering} and \ref{lem: hook multiset}, it follows that each chain in the interval $I=[D_2,D_1]$ has the multiset of edge labels $\{L_1^{\mu_1},\dots,L_k^{\mu_k}\},$ where, without loss of generality, we may assume $L_i<L_j$ whenever $i<j.$

We describe the sequence of Kohnert moves that corresponds to the unique rising unrefinable chain $\mathcal{C}^*$ in $I.$ In short, $\mathcal{C}^*$ is the chain obtained by moving each cell individually from its location in $D_1$ to its position in $D_2$ in decreasing order of the cells' labels. Explicitly, define $\mathcal{C}^*$ to be the chain of diagrams obtained via the following:
\begin{description}
\item[Step 1:] If the cell labeled $L_k$ is located in row $\tilde{r}_k$ of $D_1$, then apply, in succession, a single Kohnert move to $D_1$ at rows $\tilde{r}_k$ through $\tilde{r}_k-\mu_k+1$ in decreasing order. Call the resulting diagram $D^k.$
\item[Step i:] If the cell labeled $L_{k-i+1}$ is located in row $\tilde{r}_{k-i+1}$ of $D_1$, then apply, in succession, a single Kohnert move to $D^{k-i+2}$ at rows $\tilde{r}_{k-i+1}$ through $\tilde{r}_{k-i+1}-\mu_i+1$ in decreasing order. Call the resulting diagram $D^{k-i+1}.$
\end{description}
\noindent Considering the method of labeling the cells of $D_1$ along with Proposition~\ref{prop:formhook}, it follows that applying the sequence of Kohnert moves outlined above has the desired effect. Note that the ordered list of labels corresponding to $\mathcal{C}^*$ is $(L_1^{\mu_1},\dots,L_k^{\mu_k}).$ It follows from Remark~\ref{rem:unique} that this list is lexicographically minimal with respect to all chains in $I$ and that $\mathcal{C}^*$ is the unique rising chain in $I.$
\end{proof}

\subsection{Necessity}

In this section, we finish the proof of Theorem~\ref{thm:hook}. We consider each part separately, starting with Theorem~\ref{thm:hook} (a).

\begin{proof}[Proof of Theorem~\ref{thm:hook} \textup(a\textup)] The backward direction was established in Section~\ref{sec:suffhook}.

Let $\widehat{D}=D\backslash\{(1,\tilde{c})~|~\tilde{c}>0\}=\{(r_1,c_1),\hdots,(r_n,c_n)\}$ with $c_1<\hdots<c_n$. Note that $r_i>1$ for $1\le i\le n$. Assume that $\widehat{D}$ is not a hook diagram. Then, applying Proposition~\ref{prop:formhook}, there exists $1\le i<j\le n$ such that $r_i<r_j$. Let $$n(k)=\begin{cases}
    |\{\tilde{c}~|~\tilde{c}>c_i,~(r_i,\tilde{c})\in \widehat{D}\}|, & k=i \\
    |\{\tilde{c}~|~\tilde{c}>c_j,~(r_k,\tilde{c})\in \widehat{D}\}|, & i< k\le j,
\end{cases}$$
for $k$ satisfying $i\le k\le j$, and form $D^*\in \mathcal{P}(\widehat{D})$ from $\widehat{D}$ as follows.
\begin{enumerate}
    \item[1)] For $k$ satisfying $i\le k\le j$ in increasing order, successively apply $n(k)$ Kohnert moves at rows $r_k$ through $2$ in decreasing order.
    \item[2)] If $r_i<r_j-1$, then apply, in succession, one Kohnert move at rows $r_j$ through $r_i+2$ in decreasing order; otherwise, do nothing.
\end{enumerate}
By our assumptions on $D$, it follows that
\begin{itemize}
    \item $(r_i,c_i), (r_i+1,c_j)\in D^*$,
    \item $(r_i+1,\tilde{c}),(r_i-1,c_j)\notin D^*$ for $\tilde{c}>c_j$, and
    \item $(r_i,\tilde{c}),(r_i-1,c_i)\notin D^*$ for $\tilde{c}>c_i$.
\end{itemize}
Thus, applying Proposition~\ref{prop:strucasc} with $r=r_i-1, c_1=c_i,$ and $c_2=c_j$, $\mathcal{P}(\widehat{D})$ is not shellable. The result now follows from Lemma~\ref{lem:0bpciso}.
\end{proof}

For the proof of Theorem~\ref{thm:hook} (b), Lemmas~\ref{lem:behindblock} through~\ref{lem:onecol} below identify necessary conditions for diagrams whose first two rows are empty to be associated with shellable Kohnert posets. Utilizing the aforementioned lemmas along with Proposition~\ref{prop:formhook}, we will be able to show that the only diagrams that remain are hook diagrams, finishing the proof of Theorem~\ref{thm:hook} (b).

\begin{lemma}\label{lem:behindblock}
    Let $D$ be a diagram for which all cells of $D$ are contained in rows $2<r_1<r_2<\hdots<r_n$ and $\mathcal{P}(D)$ is shellable. If $c^*$ is the rightmost nonempty column of $D$ and $r^*$ is maximal such that $(r^*,c^*)\in D$, then $(\tilde{r},\tilde{c})\in D$ for $\tilde{c}<c^*$ implies that $\tilde{r}\ge r^*$.
\end{lemma}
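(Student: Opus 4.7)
The plan is to prove the contrapositive: if there exists $(\tilde{r},\tilde{c})\in D$ with $\tilde{r}<r^*$ and $\tilde{c}<c^*$, then $\mathcal{P}(D)$ is not shellable. The idea is to construct $D^*\in\mathcal{P}(D)$ to which Proposition~\ref{prop:strucasc} applies, forcing non-shellability.

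First I would refine the choice of the witnessing cell. Pick $\tilde{r}$ minimal with $\tilde{r}<r^*$ for which there is a cell $(\tilde{r},c)\in D$ with $c<c^*$, and then pick $\tilde{c}$ maximal with $(\tilde{r},\tilde{c})\in D$ and $\tilde{c}<c^*$. Minimality guarantees that every row strictly below $\tilde{r}$ either is empty or contains only cells in column $c^*$, while maximality guarantees that row $\tilde{r}$ has no cells in columns strictly between $\tilde{c}$ and $c^*$. Since rows $1$ and $2$ are empty in $D$, we have $\tilde{r}\geq 3$, so $r=\tilde{r}-1\geq 2$ is a valid positive row.

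Next, I would construct $D^*$. Let $s_1<s_2<\cdots<s_m=r^*$ be the rows containing cells of column $c^*$ in $D$, and let $j$ be the smallest index with $s_j\geq\tilde{r}+1$. Since rows $1,2$ are empty, the $j-1$ cells at $s_1,\ldots,s_{j-1}$ lie in $\{3,\ldots,\tilde{r}\}$, so $j-1\leq\tilde{r}-2$. Because $c^*$ is the rightmost nonempty column, every Kohnert move at a row currently containing a column-$c^*$ cell affects only that column; using such moves, push the cell at $(s_j,c^*)$ down to $(\tilde{r}+1,c^*)$ (doing nothing if $s_j=\tilde{r}+1$) and push the cells originally at $(s_1,c^*),\ldots,(s_{j-1},c^*)$ down to stack into rows $1,2,\ldots,j-1$, leaving the cells at $s_{j+1},\ldots,s_m$ in place. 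Call the resulting diagram $D^*$; its column $c^*$ then contains cells exactly at $\{1,\ldots,j-1\}\cup\{\tilde{r}+1\}\cup\{s_{j+1},\ldots,s_m\}$, and every other column is identical to $D$.

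Finally, I would verify Proposition~\ref{prop:strucasc} applies to $D^*$ with $r=\tilde{r}-1$, $c_1=\tilde{c}$, $c_2=c^*$. Both $(\tilde{r},\tilde{c})$ (preserved since $\tilde{c}\neq c^*$) and $(\tilde{r}+1,c^*)$ lie in $D^*$; row $\tilde{r}-1$ is empty at $\tilde{c}$ by minimality of $\tilde{r}$, and empty at $c^*$ because $\tilde{r}-1>j-1$ while $\tilde{r}-1<s_{j+1}$; row $\tilde{r}$ is empty at every column $c>\tilde{c}$ by maximality of $\tilde{c}$, together with $(\tilde{r},c^*)\notin D^*$ (since $\tilde{r}>j-1$ and $\tilde{r}<s_{j+1}$); and no cells lie right of column $c^*$. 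Proposition~\ref{prop:strucasc} then yields non-shellability of $\mathcal{P}(D)$, contradicting the hypothesis of the lemma. The main technical obstacle is verifying that the target $D^*$ is actually reachable from $D$; this reduces to the single-column claim that within column $c^*$ we may reach any configuration $t_1<\cdots<t_m$ with $t_i\leq s_i$ by iterated Kohnert moves, which holds because $c^*$ being rightmost ensures each such move acts only on column $c^*$ and never disturbs another column.
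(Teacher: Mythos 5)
Your argument is correct and follows the same strategy as the paper's own proof: both reduce to Proposition~\ref{prop:strucasc} by using Kohnert moves confined to the offending region to manufacture a diagram $D^*$ exhibiting the forbidden pattern with $c_1<c_2=c^*$ and $r=\tilde{r}-1$. The only difference is bookkeeping in the choice of witnesses and clearing moves — the paper takes a maximal violating column and a minimal cell of column $c^*$ above it, clearing row $r_i-1$ with moves at rows $r_i-1$ and $r_i$, whereas you take a minimal violating row (so the row beneath is automatically empty to the left of $c^*$) and rearrange only column $c^*$; both constructions are valid.
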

\begin{proof}
    Assume otherwise. Then there exists a maximal $c<c^*$ such that, for some $i$ and $j$ satisfying $1\leq i<j\leq n$, we have $(r_i,c),(r_j,c^*)\in D$. Assume that $j$ is chosen to be minimal with the aforementioned properties, i.e., $(r_j,c^*)$ is the lowest cell in column $c^*$ for which there exists a cell in column $c$ lying in a strictly lower row. Note that, by the maximality of $c$, there are no cells below row $r_j$ in columns strictly between $c$ and $c^*$. Further, by the minimality of $j$, there are no cells in column $c^*$ strictly between rows $r_i$ and $r_j$. Now, form $\widehat{D}\in \mathcal{P}(D)$ as follows. 
    \begin{enumerate}
        \item[1)] Letting $k=|\{\tilde{c}~|~\tilde{c}=c\text{ or }c^*,~(r_i-1,\tilde{c})\in D\}|$, apply $k$ Kohnert moves at row $r_i-1$.
        \item[2)] Next, if $(r_i,c^*)\in D$, then apply a single Kohnert move at row $r_i$ followed by a single Kohnert move at row $r_i-1$\bl{;} otherwise, do nothing.
        \item[3)] Finally, if $r_i<r_j-1$, then apply, in succession, one Kohnert move at rows $r_j$ through $r_i+2$ in decreasing order\bl{;} otherwise, do nothing.
    \end{enumerate}
    Note that since $r_1>2$, any Kohnert moves applied in steps 1 and 2 are nontrivial. By our assumptions on $D$, it follows that
    \begin{itemize}
        \item $(r_i,c),(r_i+1,c^*)\in \widehat{D}$,
        \item $(r_i+1,\tilde{c}),(r_i-1,c^*)\notin \widehat{D}$ for $\tilde{c}>c^*$, and
        \item $(r_i,\tilde{c}),(r_i-1,c)\notin\widehat{D}$ for $\tilde{c}>c$.
    \end{itemize}
    Therefore, applying Proposition~\ref{prop:strucasc} with $r=r_i-1$, $c_1=c$, and $c_2=c^*$, it follows that $\mathcal{P}(D)$ is not shellable, a contradiction. The result follows.
\end{proof}

\begin{lemma}\label{lem:infrontblock}
    Let $D$ be a diagram for which all cells of $D$ are contained in rows $2<r_1<r_2<\hdots<r_n$, there exists a column in which $D$ contains more than one cell, and $\mathcal{P}(D)$ is shellable.
    If $c^*$ is the rightmost column of $D$ containing more than one cell, then all columns $\tilde{c}>c^*$ of $D$ are empty.
\end{lemma}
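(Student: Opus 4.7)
The plan is to argue by contradiction via Proposition~\ref{prop:strucasc}. Suppose some column $\tilde c > c^*$ is nonempty, and let $\bar c$ denote the rightmost such column. Since $c^*$ is the rightmost column of $D$ with more than one cell, $\bar c$ contains a single cell at some row $\bar r > 2$, and all columns strictly to the right of $\bar c$ are empty. The goal is to exhibit some $D^* \in \mathcal{P}(D)$ meeting the hypotheses of Proposition~\ref{prop:strucasc} with $r = 1$, $c_1 = c^*$, and $c_2 = \bar c$; non-shellability of $\mathcal{P}(D)$ will then follow, contradicting the hypothesis. Concretely, I want $D^*$ to contain $(2, c^*)$ and $(3, \bar c)$; to omit $(1, c^*)$, $(1, \bar c)$, and every cell at row $2$ in a column strictly right of $c^*$; and to omit every cell at row $3$ in a column strictly right of $\bar c$ (the last being automatic by the choice of $\bar c$).

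I would construct $D^*$ in two stages, applying only Kohnert moves at rows strictly greater than $3$. In the first stage, push the unique cell of column $\bar c$ from row $\bar r$ down to row $3$ by successive Kohnert moves at rows $\bar r, \bar r - 1, \ldots, 4$; each move is legal since column $\bar c$ contains a single cell and all columns to its right are empty, so that cell is always the rightmost in its row. In the second stage, produce a cell at $(2, c^*)$ by iteratively pushing cells of column $c^*$ downward. When a Kohnert move at some row $r > 3$ is blocked because a single cell in a column $c^* < \tilde c < \bar c$ is rightmost in row $r$, first apply that Kohnert move to push the offending singleton one row down within its own column, and then repeat. Because column $c^*$ has at least two cells, this clearing-and-pushing procedure can be iterated until column $c^*$ contains cells at both rows $3$ and $4$; one final Kohnert move at row $4$ then sends $(4, c^*)$ past the occupied $(3, c^*)$ directly to $(2, c^*)$, by the ``jumping over other cells'' clause in the definition of a Kohnert move.

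Since no Kohnert move is ever applied at row $3$ or below, the cell at $(3, \bar c)$ is preserved; since every single cell in a column $c^* < \tilde c < \bar c$ only moves one row at a time and never lands below row $3$ (rows $1, 2$ were empty in $D$ and no move is made at row $3$), no cell outside column $c^*$ ever reaches row $2$; and since the only Kohnert move that lands a cell on row $2$ is the final one in column $c^*$, no cell in column $c^*$ is pushed to row $1$. Thus $D^*$ satisfies all the hypotheses of Proposition~\ref{prop:strucasc} with $r = 1$, $c_1 = c^*$, $c_2 = \bar c$, so $\mathcal{P}(D)$ is not shellable, giving the desired contradiction.

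The main obstacle lies in the bookkeeping for the second stage: I must verify that, regardless of the distribution of single cells in columns between $c^*$ and $\bar c$ and of the original spacing of the cells of column $c^*$, the Kohnert moves at rows $> 3$ can always be sequenced to produce simultaneous cells of column $c^*$ at rows $3$ and $4$. This hinges crucially on column $c^*$ having at least two cells and on the column-specific jumping behavior of Kohnert moves, which forces singletons in other columns to land at row $3$ rather than row $2$ while still allowing a cell of column $c^*$ to skip over the occupied row $3$ to row $2$.
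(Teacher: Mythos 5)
Your argument is correct, and it reaches the same key tool as the paper (Proposition~\ref{prop:strucasc} with $c_1=c^*$ and $c_2$ a nonempty column to the right), but by a genuinely different construction. The paper takes the \emph{leftmost} nonempty column $c>c^*$, bottom-justifies only the columns beyond $c$, and then splits into two cases according to whether the singleton in column $c$ sits above or at/below the lowest cell of column $c^*$, building the forbidden configuration locally at the rows where those cells originally live. You instead take the \emph{rightmost} nonempty column $\bar c>c^*$ and push everything down to rows $2$--$4$, exploiting the hypothesis that rows $1$ and $2$ of $D$ are empty; this lets you always use the ``jump over the occupied $(3,c^*)$'' move to land a cell at $(2,c^*)$, which the paper only needs in its second case. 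The payoff of your route is the elimination of the case analysis and a uniform target configuration ($r=1$); the cost is the sweeping argument in your second stage, which you rightly flag. That bookkeeping does go through: every column strictly to the right of $c^*$ is a singleton column, so each blocking cell descends exactly one row per Kohnert move and, since you never move at row $3$ or below, never drops beneath row $3$; meanwhile the two lowest cells of column $c^*$ can always be brought to rows $3$ and $4$ because the portion of column $c^*$ beneath them is empty throughout. One small presentational point: it would be worth stating explicitly that no move in your procedure ever acts on a cell in a column left of $c^*$ (each move you apply has its intended cell rightmost in its row after clearing), so those cells remain undisturbed and cannot violate the hypotheses of Proposition~\ref{prop:strucasc}.
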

\begin{proof}
    Assume otherwise. Let $c>c^*$ be minimal such that column $c$ of $D$ is nonempty. Since $c^*$ is the rightmost column of $D$ containing more than one cell, all nonempty columns $\tilde{c}>c^*$ of $D$ must contain exactly one cell. Let $D_1$ denote the diagram formed by bottom-justifying the cells in columns $\tilde{c}>c$ of $D$. Evidently, $D_1\in \mathcal{P}(D)$. Assume that $(r_i,c),(r_j,c^*)\in D_1$ where $r_j$ is minimal, i.e., $(r_j,c^*)$ is the lowest cell in column $c^*$ of $D_1$. Note that since column $c^*$ contains more than one cell and $r_j$ is the row occupied by the lowest such cell, it follows that $j<n$. There are two cases.
    \bigskip

    \noindent
    \textbf{Case 1:} $i>j$. In this case, form the diagram $D_2$ from $D_1$ as follows. If $r_j<r_i-1$, then apply, in succession, one Kohnert move at rows $r_i$ through $r_j+2$ in decreasing order; otherwise, do nothing. By our assumptions on $D$,
    \begin{itemize}
        \item $(r_j,c^*),(r_j+1,c)\in D_2$,
        \item $(r_j+1,\tilde{c}),(r_j-1,c)\notin D_2$ for $\tilde{c}>c$, and
        \item $(r_j,\tilde{c}),(r_j-1,c^*)\notin D_2$ for $\tilde{c}>c^*$.
    \end{itemize}
    Thus, applying Proposition~\ref{prop:strucasc} with $r=r_j-1$, $c_1=c^*$, and $c_2=c$, it follows that $\mathcal{P}(D)$ is not shellable, a contradiction.
    \bigskip

    \noindent
    \textbf{Case 2:}  $i\le j$. Assume that $r_k$ is minimal such that $(r_k,c^*)\in D_1$ and $r_k>r_j$, i.e., $r_k$ is the second lowest nonempty row in column $c^*$ of $D$. Form the diagram $D_2$ from $D_1$ as follows. 
    \begin{enumerate}
        \item[1)] If $i<j$, then apply, in succession, one Kohnert move at rows $r_j$ through $r_i+1$ in decreasing order; otherwise, do nothing.
        \item[2)] Apply, in succession, one Kohnert move at rows $r_k$ through $r_i+1$ in decreasing order.
    \end{enumerate}
    By our assumptions on $D$, it follows that
    \begin{itemize}
        \item $(r_i-1,c^*),(r_i,c)\in D_2$,
        \item $(r_i,\tilde{c}),(r_i-2,c)\notin D_2$ for $\tilde{c}>c$, and
        \item $(r_i-1,\tilde{c}),(r_i-2,c^*)\notin D_2$ for $\tilde{c}>c^*$.
    \end{itemize}
    Applying Proposition~\ref{prop:strucasc} with $r=r_i-2$, $c_1=c^*$, and $c_2=c$, it follows that $\mathcal{P}(D)$ is not shellable, a contradiction.
    \bigskip

    \noindent
    The result follows.
\end{proof}

\begin{lemma}\label{lem:onecol}
    Let $D$ be a diagram for which all cells of $D$ are contained in rows $2<r_1<r_2<\hdots<r_n$ and $\mathcal{P}(D)$ is shellable. If there exists a column in which $D$ contains more than one cell, then it is unique.
\end{lemma}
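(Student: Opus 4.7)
Plan: Suppose, for a contradiction, that $D$ contains two distinct columns $c_1<c_2$ each holding more than one cell, and that $\mathcal{P}(D)$ is shellable. Applying Lemma~\ref{lem:infrontblock} to the rightmost column of $D$ with more than one cell, we may assume $c_2$ is itself this rightmost column, so every column $\tilde{c}>c_2$ of $D$ is empty. Writing $r^*$ for the top row of column $c_2$, Lemma~\ref{lem:behindblock} then forces every cell of $D$ in a column $<c_2$ to lie in a row $\ge r^*$; in particular $r^*\ge 4$ because column $c_2$ contains at least two cells, all in rows $\ge 3$.

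The plan is to exhibit a diagram $D^*\in\mathcal{P}(D)$ that satisfies the hypotheses of Proposition~\ref{prop:strucasc} with $r=r^*-2$ and with the same $c_1,c_2$; the conclusion of that proposition then contradicts shellability. Explicitly, the target configuration is that $D^*$ contain $(r^*-1,c_1)$ and $(r^*,c_2)$, have nothing in column $c_1$ at row $r^*-2$ and nothing in column $c_2$ at row $r^*-2$ or $r^*-1$, and contain no cell of row $r^*-1$ strictly to the right of $c_1$ (the required emptiness to the right of $c_2$ in row $r^*$ is automatic).

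The diagram $D^*$ is constructed from $D$ by a sequence of Kohnert moves in three stages. First, the cells of column $c_2$ lying in rows strictly below $r^*$ are pushed past row $r^*-2$ by a succession of Kohnert moves applied in those rows, so that column $c_2$ retains only its top cell at $(r^*,c_2)$ in rows $\ge r^*-2$. Second, the topmost cell of column $c_1$ is brought down to $(r^*-1,c_1)$ via a Kohnert move; the remaining cells of column $c_1$, which lie in rows $\ge r^*$ by Lemma~\ref{lem:behindblock}, serve as the blocker that stops the descent exactly at row $r^*-1$. Third, any residual cells that could lie in row $r^*-1$ in columns strictly between $c_1$ and $c_2$ are removed; because such cells originally occupy rows $\ge r^*$ by Lemma~\ref{lem:behindblock}, leaving them untouched already ensures row $r^*-1$ is clear in intermediate columns. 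One then reads the five conditions of Proposition~\ref{prop:strucasc} directly off of the construction and invokes that proposition to obtain the contradiction.

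The main obstacle is the second stage: arranging that the descending column-$c_1$ cell comes to rest precisely at row $r^*-1$. Since a Kohnert move drops the moved cell to the first empty row beneath it in its own column, some cell of column $c_1$ must occupy row $r^*$ at the moment of descent. In the favorable case where the lowest cell of column $c_1$ in $D$ already lies at row $r^*$, this is automatic; otherwise one must first apply preparatory Kohnert moves on column $c_1$ to lower one of its cells into row $r^*$, using the other cells of column $c_1$ (all at rows $\ge r^*$) as intermediate blockers. A secondary subtlety is that Kohnert moves always act on the rightmost cell of a given row, so cells in columns $c$ with $c_1<c\le c_2$ lying in the same row as the cell we wish to move can block the intended manipulation; the three stages must therefore be interleaved so that, at each moment, the cell scheduled to move is in fact the rightmost cell in its row.
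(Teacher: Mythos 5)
Your overall strategy is the same as the paper's: use Lemma~\ref{lem:infrontblock} and Lemma~\ref{lem:behindblock} to normalize the picture around the rightmost multi-cell column $c_2$ and its top row $r^*$, then manufacture a diagram realizing the pattern of Proposition~\ref{prop:strucasc} at $(r^*-1,c_1)$, $(r^*,c_2)$ with $r=r^*-2$. The target configuration you identify is exactly the one the paper aims for. However, the proposal stops short of a proof precisely where the work lies: the explicit sequence of Kohnert moves producing $D^*$ is never given, and the two sketches you do give for the hard steps fail as stated. In stage 2, the claim that the other cells of column $c_1$ ``serve as the blocker that stops the descent exactly at row $r^*-1$'' is false whenever column $c_1$ has gaps above $r^*$ (a Kohnert move drops a cell to the \emph{first} empty position below it, so it stops at the first gap, not at $r^*-1$). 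You acknowledge this, but the proposed repair -- ``preparatory Kohnert moves on column $c_1$'' -- cannot even be initiated until every cell lying to the right of $c_1$ in the relevant rows has been cleared, since Kohnert moves only act on the rightmost cell of a row. This in turn contradicts stage 3, where you assert the intermediate columns can be ``left untouched'': they cannot, and once their cells are displaced you must argue they do not land in row $r^*-1$, which is exactly the row that condition (iv) of Proposition~\ref{prop:strucasc} requires to be empty to the right of $c_1$. That verification is the crux of the argument and is missing.

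The paper closes these gaps with two choices you omit. First, it takes $c_1$ to be the \emph{second-rightmost} column with more than one cell, so every column strictly between $c_1$ and $c_2$ holds at most one cell; a displaced cell from such a column therefore falls to row $r^*$ (the first empty position in an otherwise empty column below it) and never reaches row $r^*-1$. Second, instead of three separate stages it runs a single bottom-up, row-by-row shepherding procedure: for each occupied row from just above $r^*$ up to the row $r_k$ of the second-lowest cell of $c_1$, it applies exactly $n(t)$ Kohnert moves at each of rows $r_t, r_t-1,\dots,r^*+1$, where $n(t)$ counts the cells of that row in columns between $c_1$ and $c_2$. This simultaneously keeps every cell to be moved rightmost in its row, stacks the cells of column $c_1$ into rows $r^*$ and $r^*-1$ and no lower, and deposits the intermediate-column cells at row $r^*$. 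Your proposal correctly names both obstacles but resolves neither, so as written it is a plan rather than a proof.
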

\begin{proof}
    Assume otherwise. Then there exists at least two columns of $D$ each of which contains more than one cell. Let $c^*_1<c^*_2$ be the rightmost two such columns of $D$ and assume that $r_i$ is maximal such that $(r_i,c_2^*)\in D$. Then 
    \begin{enumerate}
        \item[(i)] for all $\tilde{c}$ satisfying $c^*_1<\tilde{c}<c^*_2$, column $\tilde{c}$ of $D$ contains at most one cell;
        \item[(ii)] applying Lemma~\ref{lem:infrontblock}, all columns $\tilde{c}>c^*_2$ of $D$ are empty; and
        \item[(iii)] applying Lemma~\ref{lem:behindblock}, $(\tilde{r},\tilde{c})\in D$ for $\tilde{c}<c_2^*$ implies that $\tilde{r}\ge r_i$.
    \end{enumerate}
    Now, assume that $r_j$ and $r_k$ are minimal such that $r_j<r_k$ and $(r_j,c^*_1),(r_k,c^*_1)\in D$, i.e., $(r_j,c^*_1),(r_k,c^*_1)$ are the two lowest cells in column $c_1^*$ of $D$. Note that, considering (iii) above, $r_i\le r_j<r_k$. Define $$n(t)=|\{\tilde{c}~|~(r_t,\tilde{c})\in D,~c^*_1\le\tilde{c}<c^*_2\}|$$ for $i< t\le k$. Form $\widehat{D}\in \mathcal{P}(D)$ as follows.
    \begin{enumerate}
        \item[1)] For $i<t\le k$ in increasing order, apply, in succession, $n(t)$ Kohnert moves at rows $r_t$ through $r_i+1$ in decreasing order.
        \item[2)] If $(r_i-1,c_2^*)\in D$, then apply a single Kohnert move at row $r_i-1$; otherwise, do nothing.
        \item[3)] If $(r_i-2,c_2^*)\in D$, then apply a single Kohnert move at row $r_i-2$; otherwise, do nothing.
    \end{enumerate}
    By our assumptions on $D$, it follows that
    \begin{itemize}
        \item $(r_i-1,c^*_1),(r_i,c^*_2)\in \widehat{D}$,
        \item $(r_i,\tilde{c}), (r_i-2,c^*_2)\notin \widehat{D}$ for $\tilde{c}>c^*_2$, and
        \item $(r_i-1,\tilde{c}),(r_i-2,c^*_1)\notin \widehat{D}$ for $\tilde{c}>c^*_1$.
    \end{itemize}
    Therefore, applying Proposition~\ref{prop:strucasc} with $r=r_i-2$, $c_1=c^*_1$, and $c_2=c^*_2$, it follows that $\mathcal{P}(D)$ is not shellable, a contradiction. The result follows.
\end{proof}

Combining the results above, we can now finish the proof of Theorem~\ref{thm:hook}.

\begin{proof}[Proof of Theorem~\ref{thm:hook} \textup(b\textup)] The backward direction was established in Section~\ref{sec:suffhook}.

Let $D$ be a diagram for which all cells are contained in rows $2<r_1<r_2<\hdots<r_n$. Note that if $D$ has no columns with more than one cell, then the result follows by (a). Consequently, we assume that there exists a column in which $D$ contains more than one cell. 

Assume that $\mathcal{P}(D)$ is shellable. Applying Lemma~\ref{lem:onecol}, there exists a unique column $c^*$ in which $D$ contains more than one cell. By Lemma~\ref{lem:infrontblock}, all columns $\tilde{c}>c^*$ of $D$ are empty. Moreover, by Lemma~\ref{lem:behindblock}, if $r_i$ is maximal such that $(r_i,c^*)\in D$ and $\tilde{c}<c^*$, then $(\tilde{r},\tilde{c})\in D$ implies $\tilde{r}\ge r_i$. Consequently, $D$ satisfies all the properties listed in Proposition~\ref{prop:formhook} except possibly property (iv). For a contradiction, assume that $D$ does not satisfy property (iv) of Proposition~\ref{prop:formhook}; that is, assume that there exists $\widehat{c}_1<\widehat{c}_2<c^*$ such that 
$(r_i,\widehat{c}_1),(r_j,\widehat{c}_2)\in D$ with $r_i<r_j.$
By Lemma~\ref{lem:behindblock}, if $r^*$ is maximal such that $(r^*,c^*)\in D$ then $r_j>r_i\ge r^*$, and it follows that all cells in rows $r$ satisfying $r_i<r\le r_j$ must be unique in their respective columns. If $r_i>r^*,$ then it also follows that each cell in row $r_i$ is unique in its respective column; however, if $r_i=r^*,$ then each cell in row $r_i$, excluding the cell $(r^*,c^*),$ is unique in its respective column. Define $$n(k)=\begin{cases}
        |\{\tilde{c}~|~(r_k,\tilde{c}),~\tilde{c}>\widehat{c}_1\}|, & i\le k<j \\
        |\{\tilde{c}~|~(r_k,\tilde{c}),~\tilde{c}>\widehat{c}_2\}|,& k=j
    \end{cases}$$
    for $k$ satisfying $i\le k\le j$. Form $\widehat{D}\in \mathcal{P}(D)$ from $D$ as follows.
    \begin{enumerate}
        \item[1)] For $k$ satisfying $i\le k \le j$ in increasing order, apply, in succession, $n(k)$ Kohnert moves at rows $r_k$ through $r_i$ in decreasing order; note that since $r_1>2$, any such Kohnert moves are nontrivial.
        \item[2)] If $r_i<r_j-1$, then apply, in succession, one Kohnert move at rows $r_j$ through $r_i+2$ in decreasing order; otherwise, do nothing.
    \end{enumerate}
    By our assumptions on $D$, it follows that
    \begin{itemize}
        \item $(r_i,\widehat{c}_1),(r_i+1,\widehat{c}_2)\in \widehat{D}$,
        \item $(r_i+1,\tilde{c}),(r_i-1,\widehat{c}_2)\notin \widehat{D}$ for $\tilde{c}>\widehat{c}_2$, and
        \item $(r_i,\tilde{c}),(r_i-1,\widehat{c}_1)\notin \widehat{D}$ for $\tilde{c}>\widehat{c}_1$.
    \end{itemize}
    Thus, applying Proposition~\ref{prop:strucasc} with $r=r_i-1$, $c_1=\widehat{c}_1$, and $c_2=\widehat{c}_2$, it follows that $\mathcal{P}(D)$ is not shellable, a contradiction. Consequently, $D$ satisfies all of the properties listed in Proposition~\ref{prop:formhook}; that is, $D$ is a hook diagram.
\end{proof}

Interestingly, among the families of diagrams considered in Theorem~\ref{thm:hook}, the corresponding Kohnert posets are (EL-)shellable precisely when the associated Kohnert polynomials are multiplicity free.

\begin{theorem}\label{thm:hookmf}
    Let $D$ be a diagram for which either there is at most one cell per column or the first two rows are empty. Then $\mathcal{P}(D)$ is \textup(EL-\textup)shellable if and only if $\mathfrak{K}_D$ is multiplicity-free.
\end{theorem}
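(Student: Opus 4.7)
The plan is to combine Theorem~\ref{thm:hook} with a direct analysis of the weight monomials of diagrams in $KD(D)$. By Theorem~\ref{thm:hook}, $\mathcal{P}(D)$ is (EL-)shellable exactly when $D$, or $D\setminus\{(1,\tilde c):\tilde c>0\}$ in the ``at most one cell per column'' case, is a hook diagram. In the latter case, row-$1$ cells of $D$ are immovable under Kohnert moves and, by the column-restriction hypothesis, occupy columns disjoint from every cell of $D\setminus\{(1,\tilde c):\tilde c>0\}$. Hence $\mathfrak{K}_D = x_1^k\,\mathfrak{K}_{D\setminus\{(1,\tilde c):\tilde c>0\}}$, where $k$ is the number of row-$1$ cells, so multiplicity-freeness passes between $D$ and $D\setminus\{(1,\tilde c):\tilde c>0\}$; it therefore suffices to prove the equivalence when the hook characterization applies directly to $D$.

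For the forward direction, fix a hook diagram $D = H(r_1, r_2; C)$ with $C = \{c_1<\cdots<c_m\}$, and set $n=r_2-r_1+1$. Since Kohnert moves preserve the cell-count of each column, every $T\in KD(D)$ contains one cell in each of $c_1,\ldots,c_{m-1}$ and $n$ cells in $c_m$; by Proposition~\ref{prop:formhook}, the arm rows $\tilde r_1\ge\cdots\ge\tilde r_{m-1}$ and leg rows $r^*_1<\cdots<r^*_n$ satisfy $\tilde r_{m-1}\ge r^*_n$. I will argue that $wt(T)$ determines $T$. Let $(e_j)$ be the row-count vector of $T$ and let $s_1<\cdots<s_p$ be the rows with $e_{s_i}\ge 1$. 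Since no arm cell occupies a row below $r^*_n$, we have $e_j\le 1$ for $j<r^*_n$, and as the leg contributes $n$ distinct rows, this forces $\{r^*_1,\ldots,r^*_n\} = \{s_1,\ldots,s_n\}$. The arm-row multiset is then $(e_{s_n}-1)$ copies of $s_n$ together with $e_{s_i}$ copies of $s_i$ for $n<i\le p$, and both arm and leg column assignments are forced by the monotonicity in Proposition~\ref{prop:formhook}. So $T$ is uniquely reconstructed from $wt(T)$, and $\mathfrak{K}_D$ is multiplicity free.

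For the reverse direction I argue contrapositively: assume $\mathcal{P}(D)$ is not (EL-)shellable. The arguments in the proofs of Lemmas~\ref{lem:behindblock}, \ref{lem:infrontblock}, and \ref{lem:onecol}, together with the concluding portions of the proofs of Theorem~\ref{thm:hook}(a) and (b), produce a diagram $D^*\in\mathcal{P}(D)$ satisfying the hypotheses of Proposition~\ref{prop:strucasc}. Following the notation used in the proof of that proposition, set $D^1_1 = D^*\ldownarrow^{(r+2,c_2)}_{(r+1,c_2)}$ and $D^1_2 = D^1_1\ldownarrow^{(r+1,c_2)}_{(r,c_2)}$, and set $D^2_1 = D^*\ldownarrow^{(r+1,c_1)}_{(r,c_1)}$ and $D^2_2 = D^2_1\ldownarrow^{(r+2,c_2)}_{(r+1,c_2)}$; both $D^1_2$ and $D^2_2$ lie in $\mathcal{P}(D)$. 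By direct inspection, each differs from $D^*$ by the same net change in row-counts---one cell removed from row $r+2$ and one added in row $r$, with row $r+1$ unchanged---so $wt(D^1_2) = wt(D^2_2)$, and $\mathfrak{K}_D$ has a monomial of coefficient at least two.

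I expect the main obstacle to be the bookkeeping in the ``at most one cell per column'' case: one must verify both that $\mathfrak{K}_D = x_1^k\,\mathfrak{K}_{D\setminus\{(1,\tilde c):\tilde c>0\}}$ really holds and that lifting the witnessing $D^*$ from $\mathcal{P}(D\setminus\{(1,\tilde c):\tilde c>0\})$ back to $\mathcal{P}(D)$ preserves the hypotheses of Proposition~\ref{prop:strucasc}. Both points follow from the observation that, under the column restriction, the immobile row-$1$ cells occupy columns disjoint from every cell involved in the configurations appearing above, so the local data around $D^*$ needed for the weight-coincidence argument is untouched.
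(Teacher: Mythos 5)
Your proposal is correct and follows essentially the same route as the paper: the multiplicity-free-to-shellable direction comes from the observation that every non-shellability witness in Section~\ref{sec:hook} is a configuration of Proposition~\ref{prop:strucasc}, whose two length-two move sequences produce distinct diagrams of equal weight, and the converse comes from uniquely reconstructing a diagram in $KD(D)$ from its weight using the monotonicity constraints of Proposition~\ref{prop:formhook}. Your only departure is cosmetic but slightly cleaner: you unify the paper's two forward cases by factoring $\mathfrak{K}_D=x_1^k\,\mathfrak{K}_{D\setminus\{(1,\tilde c)\,:\,\tilde c>0\}}$ (valid since the row-one cells sit in otherwise empty columns and are inert), whereas the paper runs the reconstruction argument separately for each family.
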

\begin{proof}
    For the backward direction, assume that $D$ is not (EL-)shellable, i.e., $D$ is not a hook diagram. Note that in establishing each of the backward directions of Theorem~\ref{thm:hook}, we found that if $D$ did not satisfy at least one of the defining properties (i)--(iv) of hook diagrams given in Proposition~\ref{prop:formhook}, then there existed $D^*\in \mathcal{P}(D)$ which contained a subdiagram of the form described in Proposition~\ref{prop:strucasc}. Given such a diagram $D^*$, let $r,c_1,c_2\in\mathbb{N}$ be such that 
    \begin{itemize}
        \item[\textup{(i)}] $1\le r$ and $c_1<c_2$,
        \item[\textup{(ii)}] $(r+1,c_1),(r+2,c_2)\in D^*$,
        \item[\textup{(iii)}] $(r+2,\tilde{c}),(r,c_2)\notin D^*$ for $\tilde{c}>c_2$, and
        \item[\textup{(iv)}] $(r+1,\tilde{c}),(r,c_1)\notin D^*$ for $\tilde{c}>c_1$.
    \end{itemize} 
    Let $D_1$ denote the diagram formed by applying a Kohnert move at row $r+2$ of $D^*$ followed by row $r+1$ and $D_2$ denote the diagram formed by applying a Kohnert move at row $r+1$ of $D^*$ followed by row $r+2$. Then we have that $$D_1=(D^*\backslash\{(r+2,c_2)\})\cup\{(r,c_2)\}\neq (D^*\backslash\{(r+1,c_1),(r+2,c_2)\})\cup\{(r,c_1),(r+1,c_2)\}=D_2$$ and $wt(D_1)=wt(D_2)$. Thus, $\mathfrak{K}_D$ is not multiplicity free.

    Now, for the forward direction, we break the proof into two cases. Assume that $n>0$ is maximal such that row $n$ of $D$ is nonempty.
    \bigskip

    \noindent
    \textbf{Case 1:} $D$ contains at most one cell per column. Let $$C_1=\{c~|~(1,c)\in D\}=\{c^1_1,\hdots,c^1_{m_1}\}$$ and $$C_2=\{c~|~(r,c)\in D~\text{and}~c\notin C_1\}=\{c^2_1<\hdots<c^2_{m_2}\}.$$ Note that for $\tilde{D}\in \mathcal{P}(D)$, $wt(\tilde{D})=\prod_{i=1}^nx_i^{a_i}$ where $a_1\ge m_1$ and $\sum_{i=1}^na_i-m_1=m_2$. Now, it is straightforward to show that if $\mathcal{P}(D)$ is (EL-)shellable and $(r_i,c^2_i)\in\tilde{D}\in \mathcal{P}(D)$ for $1\le i\le m_2$, then $r_1\ge r_2\ge\cdots\ge r_{m_2}$. Thus, if $\prod_{i=1}^nx_i^{a_i}$ is a monomial in $\mathfrak{K}_D$, then it corresponds uniquely to the diagram $$\tilde{D}=\{(1,c)~|~c\in C_1\}\cup\{(1,c^2_{m_2-i+1})~|~1\le i\le a_1-m_1+1\}\cup\bigcup_{j=2}^n\left\{(j,c^2_{m_2-i+1})~|~\sum_{k=1}^{j-1}a_k< i\le\sum_{k=1}^{j}a_k\right\}.$$ Consequently, $\mathfrak{K}_D$ is multiplicity free.
    \bigskip

    \noindent
    \textbf{Case 2:} The first two rows of $D$ are empty. Assume that $D\in H(r_1,r_2;C)$ with $C=\{c_1<\hdots~<~c_m\}$. Since we have already considered the situation where $D$ contains at most one cell per column in Case 1, we may assume that $L=r_2-r_1>0$. Take $\tilde{D}\in \mathcal{P}(D)$ and assume that $r$ is maximal such that $(r,c_m)\in \tilde{D}$. Then considering Proposition~\ref{prop:formhook} (i)--(iii), there exists $1\le i_1<\cdots<i_{L}< n$ such that $wt(\tilde{D})=\prod_{j=1}^Lx_{i_j}\prod_{k=i_L+1}^nx^{a_k}_k$, where $\sum_{k=i_L+1}^na_k=|C|$. Now, considering Proposition~\ref{prop:formhook} (iv), if $\prod_{j=1}^Lx_{i_j}\prod_{k=i_L+1}^nx^{a_k}_k$ is a monomial of $\mathfrak{K}_D$, then it  corresponds uniquely to the diagram $$\tilde{D}=\{(i_j,c_m)~|~1\le j\le L\}\cup\{(i_L+1,c_{m-i+1})~|~0< i\le a_{i_L+1}\}$$ $$\cup\bigcup_{j=2}^{n-i_L}\left\{(i_L+j,c_{m-i+1})~|~\sum_{k=i_L+1}^{i_L+j-1}a_k< i\le\sum_{k=i_L+1}^{i_L+j}a_k\right\}.$$ Consequently, $\mathfrak{K}_D$ is multiplicity free.
\end{proof}

\begin{remark}
    In the following section, we find, in particular, that the equivalence between \textup(EL-\textup)shellability of a Kohnert poset and the associated Kohnert polynomial being multiplicity free does not hold in general. More specifically, for key diagrams we find that if a Kohnert poset is graded and EL-shellable, then the associated Kohnert polynomial is multiplicity free; however, there are examples of key diagrams whose Kohnert polynomials are multiplicity free and whose Kohnert posets are not shellable.
\end{remark}

It remains to consider diagrams that contain at least one cell within the first two rows and for which at least one column contains more than one cell. While we do not obtain a complete classification in this case, in the following section we consider the special case of key diagrams.

\section{Key diagrams}\label{sec:key}

In this section, we consider Kohnert posets associated with key diagrams. Recall from Section~\ref{sec:prelim} that key diagrams are defined by weak compositions: given a weak composition $\mathbf{a}=(a_1,\hdots,a_n)$, the associated key diagram  is defined by $$\mathbb{D}(\mathbf{a})=\bigcup_{i=1}^n\{(i,j)~|~1\le j\le a_i\}.$$
Ongoing, we let $|\mathbf{a}|=\sum_{i=1}^na_i$.

The main result of this section is a characterization of the key diagrams that generate pure, shellable Kohnert posets in terms of their associated compositions. To state the main result, we require the notion of ``pure composition" introduced in \textbf{\cite{KPoset1}}. 

\begin{definition}
    A weak composition $\mathbf{a}=(a_1,\hdots,a_n)$ is called a \textbf{pure composition} if there exists no $1\le j_1<j_2<j_3\le n$ such that 
    \begin{itemize}
        \item $a_{j_1}<a_{j_2}<a_{j_3}$, 
        \item $a_{j_1}<a_{j_3}<a_{j_2}$, or
        \item $a_{j_1}+1<a_{j_2}=a_{j_3}$.
    \end{itemize}
\end{definition} 

In \textbf{\cite{KPoset1}}, the following properties of pure compositions and Kohnert posets associated with key diagrams are established. To set the notation, given a weak composition $\mathbf{a}=(a_1,\hdots,a_n)$, ongoing we set $\max(\mathbf{a})=\max\{a_i~|~1\le i\le n\}$ and $\min(\mathbf{a})=\min\{a_i~|~1\le i\le n\}$, i.e., $\max(\mathbf{a})$ (resp., $\min(\mathbf{a})$) is the maximal (resp., minimal) entry of $\mathbf{a}$.

\begin{lemma}[Lemma 6.8, \textbf{\cite{KPoset1}}]\label{lem:puredecomp}
    If $\mathbf{a}=(a_1,\hdots,a_n)$ is a pure composition, then there exists $i_1=1<\hdots<i_{m}<n=i_{m+1}-1$ such that $\alpha_j=(a_{i_j},\hdots,a_{i_{j+1}-1})$ for $1\le j\le m$ satisfies $\min(\alpha_{j-1})\ge \max(\alpha_j)$ for $1<j\le m$. Moreover, each $\alpha_j=(a_{i_j},\hdots,a_{i_{j+1}-1})$ is of one of the following forms:
    \begin{enumerate}
         \item[\textup{(i)}] $a_{i_j}\ge \hdots\ge a_{i_{j+1}-1}$; that is, $\alpha_j$ is a weakly decreasing sequence. 
        \item[\textup{(ii)}] There exists $p\in\mathbb{Z}_{\ge 0}$ such that $a_{i_j}=p$ and $\{a_{i_j},\hdots,a_{i_{j+1}-1}\}=\{p,p+1\}$; that is, all entries of $\alpha_j$ are $p$ or $p+1$ for some $p\in \mathbb{Z}_{\ge 0}$, the first entry is $p$, and some other entry must be equal to $p+1$. 
        \item[\textup{(iii)}] $a_{i_j}\ge\hdots\ge a_{i_{j+1}-2}<a_{i_{j+1}-1}-1$; that is, the entries of $\alpha_j$ are in decreasing order, except the last one which is at least two larger than the penultimate one. 
        \item[\textup{(iv)}] There exist $p\in\mathbb{Z}_{\ge 0}$ and $i_j^*\in\mathbb{Z}_{> 0}$ with $i_j+1<i_j^*<i_{j+1}-1$ such that $a_{i_j}=p$, $\{a_{i_j},\hdots,a_{i_j^*-1}\}=\{p,p+1\}$, $p>a_{i_j^*}\ge \hdots\ge a_{i_{j+1}-2}$, and $a_{i_{j+1}-1}=p+1$.
    \end{enumerate}
\end{lemma}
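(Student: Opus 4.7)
The plan is to proceed by induction on the length $n$ of $\mathbf{a}$. For the inductive step, I identify the first block $\alpha_1$ by setting $i_1=1$ and taking $i_2$ to be the smallest index $>1$ satisfying $\min\{a_1,\ldots,a_{i_2-1}\} \geq \max\{a_j : j \geq i_2\}$, with the convention $i_2 = n+1$ if no such index exists. This choice immediately yields the required inter-block inequality $\min(\alpha_1) \geq \max(\alpha_2)$. Since the three forbidden patterns are hereditary under passing to suffixes, $(a_{i_2},\ldots,a_n)$ is itself a pure composition, and applying the inductive hypothesis to it produces a decomposition $\alpha_2,\ldots,\alpha_m$ that concatenates with $\alpha_1$ to give the desired decomposition of $\mathbf{a}$ with the inter-block inequalities propagating throughout.

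The heart of the argument is verifying that $\alpha_1$ is of one of the four stated forms. The key intermediate observation, extracted directly from the forbidden patterns, is the following dichotomy: for every index $j_1$, the set $S_{j_1} := \{j > j_1 : a_j > a_{j_1}\}$ either has at most one element, or every entry indexed by $S_{j_1}$ equals $a_{j_1}+1$. Indeed, if $|S_{j_1}| \geq 2$, any two indices $j_2 < j_3$ in $S_{j_1}$ with distinct values give a forbidden $123$ or $132$ pattern (both possibilities rule out one ordering of $a_{j_2},a_{j_3}$), so all values in $S_{j_1}$ agree; the third forbidden pattern then forces the common value to be $a_{j_1}+1$, for otherwise $a_{j_1}+1 < a_{j_2} = a_{j_3}$ would itself be a forbidden triple.

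Setting $p := a_1$ and applying this dichotomy at $j_1 = 1$, the four forms emerge from a case split on the size of $S_1$ and whether $\alpha_1$ contains entries strictly below $p$: form (i) arises when $\alpha_1$ is a single element, which the minimality of $i_2$ forces whenever no entry of $\mathbf{a}$ beyond position $1$ exceeds $p$; form (ii) when $|S_1| \geq 2$ (all excesses then equal $p+1$) and no entry of $\alpha_1$ drops below $p$; form (iii) when $|S_1| = 1$, with the unique excess necessarily at the terminal position of $\alpha_1$ (otherwise the minimality of $i_2$ is violated) and the preceding entries weakly decreasing by iterated application of the dichotomy; and form (iv) when $|S_1| \geq 2$ combined with a drop below $p$, in which case the dichotomy applied at the first sub-$p$ index forces the sub-$p$ tail to be weakly decreasing, and the minimality of $i_2$ together with the dichotomy at $j_1 = 1$ forces a single terminal $p+1$ spike. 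The main obstacle is this four-way case analysis, particularly pinning down in forms (iii) and (iv) that the terminal ``large'' entry must sit precisely at the last position of $\alpha_1$; this requires carefully combining the dichotomy (invoked not only at $j_1 = 1$ but at each index where the running prefix minimum drops) with the minimality of $i_2$ to rule out hybrid shapes that would otherwise fit none of the four forms.
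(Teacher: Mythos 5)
The paper does not actually prove this lemma: it is imported verbatim as Lemma 6.8 of \textbf{\cite{KPoset1}}, and the remark following the example explicitly defers to the proof in that reference for the decomposition procedure. So there is no in-paper argument to compare against, and your proposal must be judged on its own. On that basis, your strategy is sound and, as far as I can check, complete in its essentials: the dichotomy on $S_{j_1}$ follows correctly from the three forbidden patterns (two distinct values in $S_{j_1}$ give a $123$ or $132$ occurrence, and a common value exceeding $a_{j_1}+1$ gives the third pattern); suffixes of pure compositions are pure; and with your minimal choice of $i_2$ one can verify that either $S_1=\emptyset$, forcing $i_2=2$ and a singleton block of form (i), or $i_2=\max S_1+1$ (any smaller cut fails because the suffix still contains an entry exceeding $a_1$, and the cut at $\max S_1+1$ succeeds because a violation $a_j<a_l$ with $j\le \max S_1<l$ would create a $132$ pattern with $a_{\max S_1}$). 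From there the four forms do cover all cases, including the index constraint $2<i_j^*<i_{j+1}-1$ in form (iv).

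The one genuine error is in your case bookkeeping for $|S_1|=1$: it is not true that this case always lands in form (iii). If the unique excess equals $p+1$ and the entries preceding it never drop strictly below $p$ (so they all equal $p$, e.g. $\mathbf{a}=(5,6)$ or $(5,5,6)$), then the strict inequality $a_{i_{j+1}-2}<a_{i_{j+1}-1}-1$ required by form (iii) fails, and the block is of form (ii) instead. The lemma survives because the block is still one of the four forms, but as written your case split would assert a false classification there; the fix is simply to subdivide the $|S_1|=1$ case according to whether $a_{k-1}<a_k-1$, assigning the boundary subcase to form (ii). With that repair the argument goes through.
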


\begin{theorem}[Theorem 6.1, \textbf{\cite{KPoset1}}]\label{thm:keybound}
    For all weak compositions $\mathbf{a}$, the Kohnert poset $\mathcal{P}(\mathbb{D}(\mathbf{a}))$ is bounded.
\end{theorem}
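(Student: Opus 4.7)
The approach is to exhibit explicit unique maximum and minimum elements of $\mathcal{P}(\mathbb{D}(\mathbf{a}))$. The maximum is immediate: by the very definition of $\mathcal{P}(\mathbb{D}(\mathbf{a}))$, every element is obtained from $\mathbb{D}(\mathbf{a})$ by a sequence of Kohnert moves, so $\mathbb{D}(\mathbf{a})$ itself is the unique top element. For the minimum, let $\mathbf{a}^{*}$ denote the weakly decreasing rearrangement of $\mathbf{a}$, and set $D_{\min}:=\mathbb{D}(\mathbf{a}^{*})$. Equivalently, $D_{\min}$ is the diagram in which column $j$ contains $h_{j}:=|\{i:a_{i}\geq j\}|$ cells bottom-justified into rows $1,\ldots,h_{j}$. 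Since Kohnert moves preserve the number of cells in each column, every diagram in $\mathcal{P}(\mathbb{D}(\mathbf{a}))$ shares this column content, and $D_{\min}$ is the unique diagram with this content that admits no nontrivial Kohnert moves.

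The heart of the proof is to show that every $D\in\mathcal{P}(\mathbb{D}(\mathbf{a}))$ reaches $D_{\min}$ via a sequence of Kohnert moves. I would give an explicit right-to-left column-sorting algorithm: process the nonempty columns $c$ in decreasing order, and within the ``column-$c$ phase'' repeatedly apply a Kohnert move at the topmost row $r$ containing a cell of column $c$ until the cells of column $c$ occupy exactly rows $1,\ldots,h_{c}$. The crucial invariant, maintained throughout, is that at the start of the column-$c$ phase all columns $c'>c$ are already bottom-justified, hence occupy only rows $\leq h_{c+1}\leq h_{c}$ (using that $h_{c'}$ is weakly decreasing in $c'$). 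Consequently, every row $r>h_{c}$ in the current diagram has its rightmost cell in some column $\leq c$, and whenever column $c$ has a cell at such a row $r$ that cell $(r,c)$ is exactly the rightmost in its row. The Kohnert move at row $r$ therefore moves a cell of column $c$ strictly downward within column $c$ without disturbing columns $c'>c$. Iterating across columns $c=c^{*},c^{*}-1,\ldots,1$ produces $D_{\min}$; applying the algorithm with $D=\mathbb{D}(\mathbf{a})$ gives $D_{\min}\in\mathcal{P}(\mathbb{D}(\mathbf{a}))$, while applying it to arbitrary $D\in\mathcal{P}(\mathbb{D}(\mathbf{a}))$ gives $D\succeq D_{\min}$.

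The main obstacle is the careful verification of this column-sorting invariant together with the tracking of where each moved cell actually lands, since ``jumping over other cells as needed'' may cause the cell to drop more than one row at a time. One must check that the Kohnert moves invoked within the column-$c$ phase never disturb columns $c'\ne c$, and that each iteration strictly reduces the maximum row index attained by cells of column $c$, so that the phase terminates with column $c$ bottom-justified. Structural tools such as Lemma~\ref{lem:strucprophelp} should help keep the bookkeeping manageable, since they pin down exactly which cells can be affected by a given sequence of Kohnert moves. Once the invariant is established, uniqueness of $D_{\min}$ as the minimum, and hence boundedness of $\mathcal{P}(\mathbb{D}(\mathbf{a}))$, follows.
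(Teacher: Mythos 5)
Your argument is correct. Note that the paper does not prove this statement itself --- it is quoted from \textbf{\cite{KPoset1}} --- but your identification of $\mathbb{D}(\mathbf{a})$ as the unique maximum and of the key diagram of the weakly decreasing rearrangement as the unique minimum is exactly the characterization the paper later invokes (Corollary 6.2 of \textbf{\cite{KPoset1}}, in the proof of Proposition~\ref{prop:purecomppieces}), and your right-to-left bottom-justification algorithm, with the invariant that already-processed columns lie in rows $\le h_c$ so that the relevant cell of column $c$ is always rightmost in its row, is sound and is essentially the same style of explicit chain construction the paper uses for its own boundedness result for hook diagrams (Lemma~\ref{lem:hookbound}).
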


\begin{theorem}[Theorem 6.5, \textbf{\cite{KPoset1}}]\label{thm:kgrade}
    Let $\mathbf{a}$ be a weak composition. Then $\mathcal{P}(\mathbb{D}(\mathbf{a}))$ is pure if and only if $\mathbf{a}$ is pure.
\end{theorem}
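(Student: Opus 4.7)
The plan is to exploit Theorem~\ref{thm:keybound}, which says that $\mathcal{P}(\mathbb{D}(\mathbf{a}))$ is always bounded. For a bounded poset, purity is equivalent to the statement that every maximal chain has the same length, so the task is to decide when saturated chains between the minimum $\hat{0}$ and $\mathbb{D}(\mathbf{a})$ itself can have differing lengths.

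For the ``only if'' direction I would argue contrapositively: assume $\mathbf{a}$ contains one of the three forbidden patterns at some indices $j_1<j_2<j_3$, and exhibit two saturated chains in $\mathcal{P}(\mathbb{D}(\mathbf{a}))$ that share endpoints but have different lengths. The mechanism is the same in all three cases and is driven by Corollary~\ref{cor:cover}: one can arrange for the rightmost cell of some row to descend through several rows in a single Kohnert move that is nonetheless a covering relation (because a cell to the right survives in an intermediate row), while the same total descent can alternatively be carried out as a sequence of single-row Kohnert moves. Concretely, for the pattern $a_{j_1}+1<a_{j_2}=a_{j_3}$, applying Kohnert moves at rows $j_2$ and $j_3$ to evacuate the rightmost column of the relevant block first, then letting the next cell jump from row $j_3$ all the way down to row $j_1$, uses one covering relation where the longer route uses two; for the patterns $a_{j_1}<a_{j_2}<a_{j_3}$ and $a_{j_1}<a_{j_3}<a_{j_2}$, the same idea applies after a uniform setup that clears the rows between $j_1$ and $j_3$ of obstructions to the right of the relevant column.

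For the ``if'' direction I would combine Lemma~\ref{lem:puredecomp} with an explicit rank function. Decompose $\mathbf{a}=\alpha_1|\cdots|\alpha_m$ so that $\min(\alpha_{j-1})\ge\max(\alpha_j)$ and each $\alpha_j$ has one of the four forms (i)--(iv). The separator condition prevents any Kohnert move from carrying a cell out of one block's row range into another, so the reachable diagrams split blockwise and it suffices to show purity within a single block. On such a block I would define the rank of a reachable diagram $D$ to be the total downward displacement $\sum_{(r,c)\in\mathbb{D}(\mathbf{a})}\bigl(r-r'(D,r,c)\bigr)$, where $r'(D,r,c)$ denotes the row to which the cell that began at $(r,c)$ has descended in $D$. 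The key step, carried out case-by-case on the four block forms, is to show that every Kohnert move applied to a reachable diagram sends its cell to the very next row below, so that each covering relation increments the rank by exactly $1$; this, together with Theorem~\ref{thm:keybound}, yields purity.

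The main obstacle is the ``if'' direction, and specifically establishing the no-jump property for every reachable diagram rather than only for $\mathbb{D}(\mathbf{a})$ itself. Cells in reachable diagrams can occupy positions far from their original locations, and one must verify by induction on the number of Kohnert moves applied that none of the four block shapes can produce a column in which a rightmost cell is forced to skip across empty rows below an occupied cell. The four forms in Lemma~\ref{lem:puredecomp} are precisely the arrangements stable under Kohnert moves with respect to this no-jump property, which is in turn the exact stability feature that fails in the presence of any of the three forbidden patterns, closing the loop with the ``only if'' analysis.
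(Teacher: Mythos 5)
First, a point of order: the paper does not prove this statement at all --- it is imported verbatim as Theorem 6.5 of \textbf{\cite{KPoset1}} --- so there is no in-paper argument to compare yours against. What follows is an assessment of your proposal on its own terms.

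The ``if'' direction has a genuine gap. Your key claim --- that for a pure composition every Kohnert move applied to a reachable diagram sends its cell to the very next row below --- is false. Take $\mathbf{a}=(1,2,1,2)$, a basic pure composition of type (ii). One Kohnert move at row $4$ of $\mathbb{D}(\mathbf{a})$ sends $(4,2)$ to $(3,2)$; in the resulting (reachable) diagram, a Kohnert move at row $3$ sends $(3,2)$ over the occupied cell $(2,2)$ down to $(1,2)$, a two-row jump. What your displacement rank function actually requires is the weaker statement that no such jump is ever a \emph{covering} relation --- equivalently, by Corollary~\ref{cor:cover}, that whenever a cell of a reachable diagram jumps, some intermediate row has no cell to the right of the jumping column --- so that every cover increments total displacement by exactly one. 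That statement is essentially the entire content of the ``if'' direction, and your proposal asserts it rather than proves it; the closing claim that the four block forms ``are precisely the arrangements stable under Kohnert moves with respect to this no-jump property'' is circular. A repair available within this paper's toolkit is to route through Propositions~\ref{prop:kdp} and~\ref{prop:purecomppieces}: for pure $\mathbf{a}$ the poset factors as a product of posets isomorphic to intervals in Kohnert posets of hook diagrams, where Lemma~\ref{lem: hook covering} supplies exactly the one-row-per-cover property, and then the displacement statistic is a rank function on each factor.

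Your ``only if'' outline is the right mechanism --- manufacture, from a forbidden pattern, two saturated chains with common endpoints and different lengths, using Corollary~\ref{cor:cover} to certify that one multi-row drop is a cover while the alternative route decomposes it into several covers --- but it is only a sketch. The reduction from arbitrary indices $j_1<j_2<j_3$ to a configuration where the jump really is a cover (for instance via Lemma~\ref{lem:permute}, as the paper does in the proof of Proposition~\ref{prop:keynecrank}), and the verification separately for each of the three patterns, still need to be written out; note in particular that for the pattern $a_{j_1}+1<a_{j_2}=a_{j_3}$ the naive first jump from row $j_3$ is typically \emph{not} a cover until the column to its right has been partially evacuated.
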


\begin{remark}
    To be precise, Theorem 6.5 of \textup{\textbf{\cite{KPoset1}}} actually establishes that for a weak composition $\mathbf{a}$, the Kohnert poset $\mathcal{P}(\mathbb{D}(\mathbf{a}))$ is ranked if and only if $\mathbf{a}$ is pure. However, it is straight-forward to verify that a bounded poset is pure if and only if it is ranked.
\end{remark}

Ongoing, given a pure composition $\mathbf{a}$, we will refer to a decomposition $(\alpha_1,\hdots,\alpha_m)$ of $\mathbf{a}$ as shown to exist in Lemma~\ref{lem:puredecomp} as a \textbf{pure decomposition} of $\mathbf{a}$. Moreover, we refer to the weak compositions of the forms described in $(i)-(iv)$  of Lemma~\ref{lem:puredecomp}, i.e., those weak compositions that form the building blocks of pure compositions, as \textbf{basic pure compositions}.

\begin{ex}
    Consider the pure composition $$\mathbf{a}=(15,15,15,14,14,15,14,15,13,11,10,7,15,7,6,5,4,6,3,3,4,3,4,3,2,1,0,1).$$ One choice of pure decomposition of $\mathbf{a}$ is given by $$\alpha_1=(15,15,15,14,14,15,14,15,13,11,10,7,15),\quad\quad\alpha_2=(7,6,5,4,6),\quad\quad\alpha_3=(3,3,4,3,4),$$ $$\alpha_4=(3,2,1),\quad\quad\text{and}\quad\quad\alpha_5=(0,1).$$ 
    Note that $\alpha_1$ is of type \textup{(iv)}, $\alpha_2$ is of type \textup{(iii)}, $\alpha_3$ and $\alpha_5$ are of type \textup{(ii)}, and $\alpha_4$ is of type \textup{(i)}.
\end{ex}

\begin{remark}
    In \textup{\textbf{\cite{KPoset1}}}, given a pure composition $\mathbf{a}$, a procedure for finding a choice of pure decomposition of $\mathbf{a}$ is provided within the proof of Lemma 6.8.
\end{remark}

We claim that Theorem~\ref{thm:kgrade} can be extended to give a characterization of pure, (EL-)shellable Kohnert posets associated with key diagrams. In particular, in the remainder of this section we establish the following.

\begin{theorem}\label{thm:kshell}
     Let $\mathbf{a}$ be a weak composition. Then $\mathcal{P}(\mathbb{D}(\mathbf{a}))$ is pure and \textup(EL-\textup)shellable if and only if $\mathbf{a}$ is pure.
\end{theorem}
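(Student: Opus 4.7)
The forward direction is immediate: if $\mathcal{P}(\mathbb{D}(\mathbf{a}))$ is pure, then by Theorem~\ref{thm:kgrade} the composition $\mathbf{a}$ is pure; shellability is not needed. For the backward direction, assume $\mathbf{a}$ is pure. By Theorem~\ref{thm:keybound} the poset $\mathcal{P}(\mathbb{D}(\mathbf{a}))$ is bounded, and by Theorem~\ref{thm:kgrade} it is pure, so only EL-shellability remains. The plan is to reduce to the EL-shellability of the Kohnert posets of each basic pure composition appearing in the pure decomposition provided by Lemma~\ref{lem:puredecomp}.

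The main structural step is a product decomposition. Fix a pure decomposition $(\alpha_1,\dots,\alpha_m)$ of $\mathbf{a}$ with breakpoints $i_1=1<i_2<\cdots<i_{m+1}=n+1$, and for $\tilde{D}\in\mathcal{P}(\mathbb{D}(\mathbf{a}))$ let $\tilde{D}|_j$ denote the sub-diagram of $\tilde{D}$ lying in rows $i_j,\dots,i_{j+1}-1$, translated down by $i_j-1$. The key lemma I plan to prove is that the condition $\min(\alpha_{j-1})\ge\max(\alpha_j)$ forces columns $1,\dots,\max(\alpha_j)$ to be completely filled in rows $1,\dots,i_j-1$ of every $\tilde{D}\in\mathcal{P}(\mathbb{D}(\mathbf{a}))$. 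By induction on the length of a sequence of Kohnert moves leading to $\tilde{D}$, this conservation property prevents cells from crossing block boundaries, so every Kohnert move in the full poset acts inside a single block and its effect on that block matches the corresponding Kohnert move in the standalone diagram $\mathbb{D}(\alpha_j)$. Combined with the converse statement that any sequence of moves performed within a block lifts to a sequence of moves in the full diagram, this should yield a poset isomorphism
$$\mathcal{P}(\mathbb{D}(\mathbf{a}))\;\cong\;\prod_{j=1}^m\mathcal{P}(\mathbb{D}(\alpha_j)).$$

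Given the product decomposition, the proof reduces to establishing EL-shellability of each factor, after which the standard fact that a direct product of bounded EL-shellable posets is EL-shellable (by concatenating the labelings lexicographically) finishes the argument. For $\alpha_j$ of type (i), $\mathbb{D}(\alpha_j)$ is a partition shape in which no Kohnert move is possible, so $\mathcal{P}(\mathbb{D}(\alpha_j))$ is a single point and is trivially EL-shellable. For types (ii), (iii), and (iv), all nontrivial motion is confined to the \textit{extra} cells that sit above the envelope determined by the block's minimum entries, and I plan to construct an explicit edge labeling by adapting the cell-decoration scheme used for hook diagrams in Section~\ref{sec:suffhook}: decorate each movable cell with a label (for instance, by row for type (ii) and by decreasing column for type (iii), with a hybrid rule for type (iv)) and label each cover by the decoration of the cell that moves, with decorations persisting along chains. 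The unique-rising-chain and lex-minimality conditions can then be verified analogously to Lemmas~\ref{lem: hook covering} and~\ref{lem: hook multiset} together with Theorem~\ref{thm:HrrCshell}.

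The principal obstacle is the product decomposition. Although the intuition is clear, namely that tighter packing in lower blocks prevents cells from falling through block boundaries, making this rigorous requires careful control of Kohnert moves that jump over filled cells and of moves applied in earlier blocks that might redistribute cells internally; one must also verify, using Corollary~\ref{cor:cover}, that covering relations in the full poset correspond precisely to covering relations in the individual blocks. Type (iv) in the factor-by-factor analysis is expected to be the most delicate part of the second step, and may itself need to be further decomposed into a product of a type (ii) piece and a subsequent type (iii) piece before the hook-style labeling can be cleanly applied.
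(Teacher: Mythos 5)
Your overall architecture is correct and your main structural step coincides with the paper's: the forward direction is indeed immediate from Theorem~\ref{thm:kgrade}, and the backward direction rests on exactly the product decomposition $\mathcal{P}(\mathbb{D}(\mathbf{a}))\cong\mathcal{P}(\mathbb{D}(\alpha_1))\times\cdots\times\mathcal{P}(\mathbb{D}(\alpha_m))$, proved (as in the paper's Proposition~\ref{prop:kdp}) via the observation that $\min(\alpha_{j-1})\ge\max(\alpha_j)$ keeps columns $1,\dots,\max(\alpha_j)$ completely filled below each block, so no cell ever crosses a block boundary. Where you diverge is in handling the factors. You propose to build an explicit EL-labeling on each $\mathcal{P}(\mathbb{D}(\alpha_j))$ by adapting the hook-diagram cell-decoration scheme, re-verifying the unique-rising-chain and lex-minimality conditions from scratch, and you anticipate having to split type (iv) into further pieces. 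The paper instead isolates the set $R$ of ``frozen'' (bottom- and left-justified) cells common to every element of $\mathcal{P}(\mathbb{D}(\alpha_j))$ -- this is precisely your ``envelope'' -- and shows that $D\mapsto D\setminus R$ is a poset isomorphism onto an interval $[H',H]$ of the Kohnert poset of the hook diagram $H=\mathbb{D}(\alpha_j)\setminus R$; EL-shellability of the factor then follows at once from Theorem~\ref{thm:HrrCshell} together with Theorem~\ref{thm:ELinterval}, with no new labeling analysis and with type (iv) handled uniformly. Your route is viable (if carried out, your labeling would be the hook labeling transported through this bijection), but the interval argument is more economical and spares you the delicate re-verification you flag as the hardest part. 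One small caution: the product theorem you invoke (Theorem~\ref{thm:pshell}) is stated for \emph{graded} posets, not merely bounded ones, so you should note explicitly that each factor is graded by Theorems~\ref{thm:keybound} and~\ref{thm:kgrade} before applying it.
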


\subsection{Necessity}

In this section we find necessary conditions for a key diagram to be associated with a shellable Kohnert poset in terms of the corresponding composition avoiding certain patterns. In particular, we establish the following.

\begin{prop}\label{prop:keynecrank}
    Let $\mathbf{a}=(a_1,\hdots,a_n)$ be a weak composition. Suppose that either 
    \begin{enumerate}
        \item[\textup{(a)}] there exist $1\le i_1<i_2<i_3\le n$ for which one of the following holds
    \begin{itemize}
    \item[\textup{(i)}] $a_{i_1}<a_{i_2}<a_{i_3}$
    \item[\textup{(ii)}] $a_{i_1}\le a_{i_3}-3\le a_{i_2}-3$;
    \end{itemize}
    \end{enumerate}
    or
    \begin{enumerate}
        \item[\textup{(b)}] there exist $1\le j_1<j_2<j_3<j_4\le n$ for which one of the following holds
    \begin{itemize}
    \item[\textup{(i)}] $a_{j_1}\le a_{j_2}<a_{j_3}-1\le a_{j_4}-1$
    \item[\textup{(ii)}] $a_{j_1}\le a_{j_2}<a_{j_4}< a_{j_3}$
    \item[\textup{(iii)}] $a_{j_2}< a_{j_1}<a_{j_4}< a_{j_3}$
    \item[\textup{(iv)}] $a_{j_2}< a_{j_1}<a_{j_3}\le a_{j_4}.$
\end{itemize}
    \end{enumerate}
Then $\mathcal{P}(\mathbb{D}(\mathbf{a}))$ is not shellable.
\end{prop}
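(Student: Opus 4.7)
The plan is to prove non-shellability in each of the six subcases by constructing an explicit witness diagram $D^* \in \mathcal{P}(\mathbb{D}(\mathbf{a}))$ that contains a subdiagram matching either the hypotheses of Proposition~\ref{prop:strucasc} or those of Corollary~\ref{cor:strucblock}. Once such a $D^*$ is exhibited, Theorem~\ref{thm:shellinterval} together with the relevant structural proposition forces $\mathcal{P}(\mathbb{D}(\mathbf{a}))$ to be non-shellable via the non-shellable interval $[\widehat{D}, D^*]$ produced in those results.

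For cases (a)(i), (b)(ii), (b)(iii), and (b)(iv)---the patterns that are morally ``ascending''---I would invoke Proposition~\ref{prop:strucasc}. The rough recipe is: starting from $\mathbb{D}(\mathbf{a})$, take $c_1$ and $c_2$ to be the right endpoints of two of the rows specified by the pattern (with $c_1 < c_2$), then apply a sequence of Kohnert moves pushing rightmost cells of the selected rows down into adjacent rows $r+1$ and $r+2$, and finally use additional moves on intermediate rows to evacuate the positions $(r+1, \tilde{c})$ with $\tilde{c} > c_1$, $(r+2, \tilde{c})$ with $\tilde{c} > c_2$, and the sockets $(r, c_1)$, $(r, c_2)$. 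The left-justified structure of $\mathbb{D}(\mathbf{a})$, together with the pattern hypothesis, is what lets these moves be performed and simultaneously ensures that the rightmost-cell and empty-cell conditions of Proposition~\ref{prop:strucasc} hold in the resulting $D^*$. For instance, in case (a)(i) one sets $c_1 = a_{i_2}$, $c_2 = a_{i_3}$, and uses the strict inequality $a_{i_1} < a_{i_2}$ to guarantee that $(i_2 - 1, c_1) \notin D^*$. The four-index cases (b)(ii)--(b)(iv) are analogous, with the extra index serving either to clear an obstructing cell or to reroute Kohnert moves around a potential collision dictated by the inversion pattern between $a_{j_3}$ and $a_{j_4}$.

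Cases (a)(ii) and (b)(i) enforce a numerical gap of at least two between two of the pattern entries, and for these I would invoke Corollary~\ref{cor:strucblock} instead. The idea is to produce a $D^*$ in which two cells stack in a column $c_1$ located just to the right of the ``shorter'' row, while a third cell sits in a column $c_2$ further right, with at least one intermediate occupied column in row $r+2$ ensuring condition (ii) of the corollary. The gap hypothesis $a_{i_3} \ge a_{i_1} + 3$ in case (a)(ii) (respectively $a_{j_3} \ge a_{j_2} + 2$ in case (b)(i)) is exactly what forces $c_2 - c_1 \ge 2$ and thus produces the required intermediate column. The stacked pair in column $c_1$ is obtained by leaving the original cell in the higher row while pushing a second cell down via a Kohnert move through the larger row.

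The principal technical obstacle is the case-by-case verification that each proposed sequence of Kohnert moves is well-defined---i.e.\ that the targeted cell really is rightmost in its row at each step and that no unexpected collision sabotages the construction---and that the resulting $D^*$ satisfies all the emptiness conditions of the invoked structural result. This hinges on the left-justified geometry of key diagrams, and will likely require subdividing some subcases further depending on which weak inequalities in the pattern hypotheses are strict, and on the relative positions of the rows $i_k$ (or $j_k$) with respect to intermediate rows of large $a$-value. A modest amount of bookkeeping notation describing the ``profile'' of each $D^*$ should keep the six verifications from becoming unwieldy.
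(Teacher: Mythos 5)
Your overall framework---exhibit a witness $D^*\in\mathcal{P}(\mathbb{D}(\mathbf{a}))$ matching Proposition~\ref{prop:strucasc} or Corollary~\ref{cor:strucblock} and conclude via Theorem~\ref{thm:shellinterval}---is exactly how the paper handles the case where the pattern occurs in \emph{consecutive} rows (Lemma~\ref{lem:key1}). The genuine gap is in passing from arbitrary indices $i_1<i_2<i_3$ (resp.\ $j_1<\cdots<j_4$) to a configuration where those structural results apply. Both results impose conditions on three \emph{adjacent} rows $r,r+1,r+2$, and Kohnert moves only displace the rightmost cell of a row downward; so if an intermediate row (say between $i_2$ and $i_3$) is very long, your plan to ``evacuate'' positions by moves on intermediate rows does not work---you cannot clear a long row out of the way, and cells dropped from row $i_3$ land wherever the intermediate rows dictate. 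You flag this as bookkeeping, but it is the actual content of the proof. The paper resolves it with a reduction you do not have: Lemma~\ref{lem:permute} shows that whenever $a_i<a_j$ with $i<j$, the key diagram of the swapped composition $\mathbf{a}s_{i,j}$ lies in $\mathcal{P}(\mathbb{D}(\mathbf{a}))$; combined with extremal choices of the pattern indices ($i_1$ maximal, $i_3$ minimal, etc., which control the values of the intervening rows), this produces a key diagram in the poset whose composition exhibits the pattern in consecutive positions, whereupon the consecutive-index lemma finishes the argument. Without this (or an equivalent device) the direct construction is not merely tedious---it is unclear how to carry it out at all.

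A secondary issue: your assignment of case $(b$-$i)$ to Corollary~\ref{cor:strucblock} is doubtful. The pattern $a_{j_1}\le a_{j_2}<a_{j_3}-1\le a_{j_4}-1$ permits $a_{j_1}=a_{j_2}$ and $a_{j_4}=a_{j_3}=a_{j_2}+2$, and in these boundary cases one cannot simultaneously secure the empty socket $(r,c_1)\notin D^*$ and the intermediate occupied column required by condition (ii) of the corollary with the choices of $c_1,c_2$ you describe. The paper instead treats $(b$-$i)$ with Proposition~\ref{prop:strucasc}: when $a_{j+2}<a_{j+3}$ it reduces to $(a$-$i)$, and when $a_{j+2}=a_{j+3}$ it first drops one cell from row $j+2$ to row $j+1$ (using row $j_1$ to guarantee the landing row is adjacent), creating the ascending two-cell configuration with $c_1=a_{j+3}-1$ and $c_2=a_{j+3}$. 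You would need to rework that subcase along these lines.
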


\begin{remark}
    Note that Proposition~\ref{prop:keynecrank} establishes more than is needed to prove Theorem~\ref{thm:kshell}; in particular, the proof of Theorem~\ref{thm:kshell} does not require part $(b$-$iv)$ of Proposition~\ref{prop:keynecrank}. We include $(b$-$iv)$ as, altogether, we conjecture that a weak composition $\mathbf{a}$ avoiding the patterns of Proposition~\ref{prop:keynecrank} is equivalent to the Kohnert poset $\mathcal{P}(\mathbb{D}(\mathbf{a}))$ being \textup(EL-\textup)shellable, with no restrictions involving purity \textup(see Conjecture~\ref{conj:key}\textup).
\end{remark}

To prove Proposition~\ref{prop:keynecrank}, we first show that the result holds if we assume that the patterns described therein are followed by consecutively occurring terms in the composition.

\begin{lemma}\label{lem:key1}
Let $\mathbf{a}=(a_1,\hdots,a_n)$ be a weak composition. Suppose that either
\begin{enumerate}
    \item[\textup{(a)}] there exists $1\le i<n-2$ for which one of the following holds 
\begin{itemize}
    \item[\textup{(i)}] $a_i<a_{i+1}<a_{i+2}$
    \item[\textup{(ii)}] $a_i\le a_{i+2}-3\le a_{i+1}-3$;
\end{itemize}
\end{enumerate}
or
\begin{enumerate}
    \item[\textup{(b)}] there exists $1\le j<n-3$ for which one of the following holds
\begin{itemize}
    \item[\textup{(i)}] $a_j\le a_{j+1}<a_{j+2}-1\le a_{j+3}-1$
    \item[\textup{(ii)}] $a_j\le a_{j+1}<a_{j+3}< a_{j+2}$
    \item[\textup{(iii)}] $a_{j+1}< a_{j}<a_{j+3}< a_{j+2}$
    \item[\textup{(iv)}] $a_{j+1}< a_{j}<a_{j+2}\le a_{j+3}$.
\end{itemize}
\end{enumerate}
Then $\mathcal{P}(\mathbb{D}(\mathbf{a}))$ is not shellable.
\end{lemma}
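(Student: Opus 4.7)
The plan is a case-by-case analysis of the six patterns listed in the lemma. In each case, I exhibit an explicit diagram $D^*\in \mathcal{P}(\mathbb{D}(\mathbf{a}))$, obtained from $\mathbb{D}(\mathbf{a})$ by a prescribed sequence of Kohnert moves, that satisfies the hypotheses of Proposition~\ref{prop:strucasc} or Corollary~\ref{cor:strucblock}; non-shellability of $\mathcal{P}(\mathbb{D}(\mathbf{a}))$ then follows immediately from the cited result.

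For pattern (a-i), $D^* = \mathbb{D}(\mathbf{a})$ itself suffices and Proposition~\ref{prop:strucasc} applies directly with $r = i$, $c_1 = a_{i+1}$, $c_2 = a_{i+2}$; every hypothesis is an immediate consequence of $a_i < a_{i+1} < a_{i+2}$ and the left-justified shape. For pattern (a-ii), I apply $a_{i+1} - a_i - 1$ Kohnert moves at row $i+1$, shrinking that row to end at column $a_i + 1$ while depositing the displaced cells in row $i$ only at columns strictly greater than $a_i + 1$; Corollary~\ref{cor:strucblock} then applies with $r = i$, $c_1 = a_i + 1$, $c_2 = a_{i+2}$, with the required in-between cell at $(i+2, a_i + 2)$ supplied by $a_{i+2} \ge a_i + 3$. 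For the four sub-cases of (b) with $a_{j+1} \ge 1$, a unified construction works: apply $\max\{0, a_j - a_{j+1} + 1\}$ Kohnert moves at row $j$, each well-defined because row $0$ is initially empty and is always available as a landing spot (a displaced cell jumps over any fully-occupied intermediate rows in its column when necessary). These moves leave rows $j+1$ and $j+2$ untouched and reduce row $j$ to length strictly less than $a_{j+1}$, allowing Corollary~\ref{cor:strucblock} to apply with $r = j$, $c_1 = a_{j+1}$, $c_2 = a_{j+2}$; the in-between cell at $(j+2, a_{j+1}+1)$ is supplied by the inequality $a_{j+2} \ge a_{j+1} + 2$ that each of (b-i)--(b-iv) forces.

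The main obstacle is the edge case $a_{j+1} = 0$, in which row $j+1$ is empty and the unified (b) construction above breaks because the required cell $(j+1, a_{j+1})$ does not exist. I would resolve this by an auxiliary construction that exploits the emptiness of row $j+1$ as a landing target: apply an appropriate sequence of Kohnert moves at row $j+3$ (or occasionally at row $j+2$) whose displaced cells jump over the filled row $j+2$ and come to rest in the empty row $j+1$, producing either a rising ascent configuration suitable for Proposition~\ref{prop:strucasc} or a block configuration suitable for Corollary~\ref{cor:strucblock}. The precise sequence of moves depends on which of (b-i)--(b-iv) is in play and on the relative sizes of $a_{j+2}$ and $a_{j+3}$; in every variant, the technical heart of the verification is ensuring that the chosen Kohnert moves carefully avoid populating the one position that the hypothesis of Proposition~\ref{prop:strucasc} or Corollary~\ref{cor:strucblock} requires to be vacant.
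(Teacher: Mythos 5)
Your cases (a-i) and (a-ii) are fine: (a-i) matches the paper exactly, and your (a-ii) is a legitimate variant of the paper's argument (the paper shrinks row $i+1$ to length $a_{i+2}-2$ and takes $c_1=a_{i+2}-2$, while you shrink it to $a_i+1$ and take $c_1=a_i+1$; both satisfy Corollary~\ref{cor:strucblock}). The problem is your unified construction for the four subcases of (b). It hinges on the claim that the $\max\{0,a_j-a_{j+1}+1\}$ Kohnert moves at row $j$ are always nontrivial because ``row $0$ is initially empty and is always available as a landing spot.'' There is no row $0$ in this setting: key diagrams occupy rows $1,\hdots,n$, a Kohnert move sends a cell to the first empty position \emph{below it in its column}, and if no such position exists the move does nothing (the paper is explicit about this concern, e.g.\ the remark ``since $r_1>2$, any Kohnert moves applied in steps 1 and 2 are nontrivial'' in Lemma~\ref{lem:behindblock}). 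Concretely, take $\mathbf{a}=(3,3,5,5)$, which satisfies (b-i) with $j=1$: your prescription calls for one Kohnert move at row $1$, which is vacuous, so $(j,a_{j+1})=(1,3)$ remains occupied and condition (v) of Corollary~\ref{cor:strucblock} fails. The same failure occurs for any $j$ whose lower rows are all at least as long as row $j$ (e.g.\ $\mathbf{a}=(5,3,3,5,5)$ with $j=2$). Since emptying the tail of row $j$ is exactly what your argument needs, this is a genuine gap, not a fixable technicality within your construction.

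The paper's route for (b) avoids this entirely by never moving cells \emph{out of} the low rows: it moves cells from rows $j+2$ and $j+3$ \emph{down into} rows $j+1$ and $j$ (which are short by hypothesis, so the landing spots provably exist) and then invokes Proposition~\ref{prop:strucasc} rather than Corollary~\ref{cor:strucblock} --- for (b-ii) and (b-iii) a single move at row $j+3$ drops a cell into $(j+1,a_{j+3})$, and for (b-i), (b-iv) with $a_{j+2}=a_{j+3}$ one move at row $j+2$ and one at row $j+1$ produce the ascending pattern at columns $a_{j+3}-1,a_{j+3}$. Separately, your $a_{j+1}=0$ edge case is only a sketch (``an appropriate sequence of Kohnert moves \dots depends on which of (b-i)--(b-iv) is in play''), so even granting the main construction, part (b) is not actually proved as written.
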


    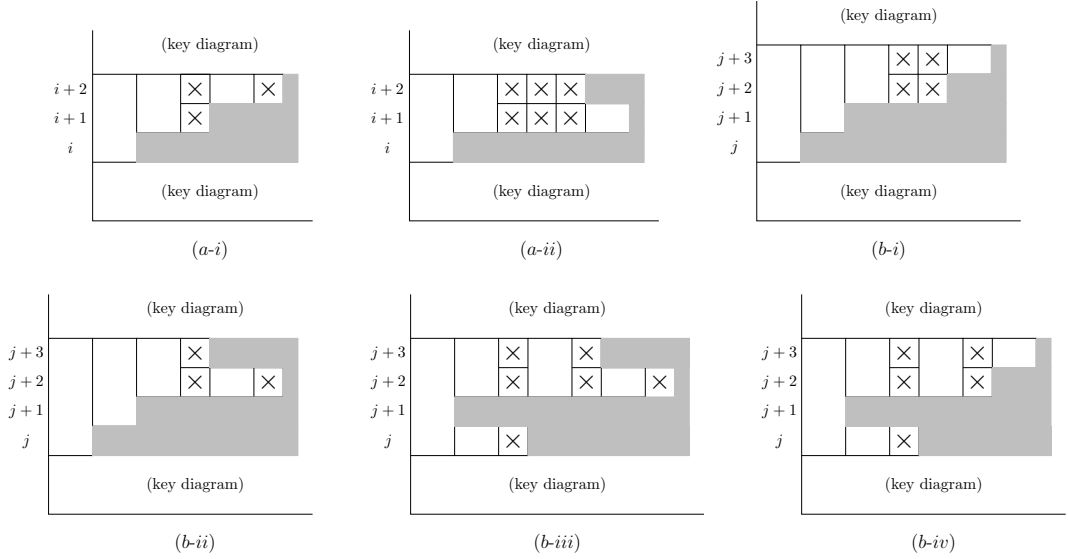
\begin{figure}[H]
        \centering
        $$\scalebox{0.6}{\begin{tikzpicture}[scale=0.65]
        \node at (-0.75,4.5) {$i+2$};
    \node at (-0.75,3.5) {$i+1$};
    \node at (-0.75,2.5) {$i$};
    \draw (0,2)--(1.5,2)--(1.5,5)--(0,5);
    \draw (1.5,3)--(3,3)--(3,5)--(1.5,5);
    \draw (3,3)--(4,3)--(4,5)--(3,5);
    \draw (0,6.5)--(0,0)--(7.5,0);
    \draw (3,4)--(4,4);
    \draw (4,4)--(5.5,4)--(5.5,5)--(3,5);
    \draw (5.5,4)--(6.5,4)--(6.5,5)--(5.5,5);
    \node at (3.5,3.5) {$\bigtimes$};
    \node at (3.5,4.5) {$\bigtimes$};
    \node at (6,4.5) {$\bigtimes$};
    \filldraw[draw=lightgray,fill=lightgray] (1.5,2) rectangle (7,3);
    \filldraw[draw=lightgray,fill=lightgray] (4,3) rectangle (7,4);
    \filldraw[draw=lightgray,fill=lightgray] (6.5,4) rectangle (7,5);
    \node at (4,6) {(key diagram)};
    \node at (4,1) {(key diagram)};
    \node at (4,-1) {\large$(a$-$i)$};
\end{tikzpicture}}\quad\quad \scalebox{0.6}{\begin{tikzpicture}[scale=0.65]
    \node at (-0.75,4.5) {$i+2$};
    \node at (-0.75,3.5) {$i+1$};
    \node at (-0.75,2.5) {$i$};
    \draw (0,2)--(1.5,2)--(1.5,5)--(0,5);
    \draw (1.5,3)--(3,3)--(3,5)--(1.5,5);
    \draw (3,3)--(4,3)--(4,5)--(3,5);
    \draw (4,3)--(5,3)--(5,5)--(4,5);
    \draw (5,3)--(6,3)--(6,5)--(5,5);
    \draw (6,3)--(7.5,3)--(7.5,4)--(6,4);
    \draw (0,6.5)--(0,0)--(8.5,0);
    \draw (3,4)--(6,4);
    
    \node at (3.5,3.5) {$\bigtimes$};
    \node at (3.5,4.5) {$\bigtimes$};
    \node at (4.5,3.5) {$\bigtimes$};
    \node at (4.5,4.5) {$\bigtimes$};
    \node at (5.5,3.5) {$\bigtimes$};
    \node at (5.5,4.5) {$\bigtimes$};
    \filldraw[draw=lightgray,fill=lightgray] (1.5,2) rectangle (8,3);
    \filldraw[draw=lightgray,fill=lightgray] (7.5,3) rectangle (8,4);
    \filldraw[draw=lightgray,fill=lightgray] (6,4) rectangle (8,5);
    \node at (4.5,6) {(key diagram)};
    \node at (4.5,1) {(key diagram)};
    \node at (4.5,-1) {\large$(a$-$ii)$};
\end{tikzpicture}}\quad\quad \scalebox{0.6}{\begin{tikzpicture}[scale=0.65]
    \node at (-0.75,5.5) {$j+3$};
    \node at (-0.75,4.5) {$j+2$};
    \node at (-0.75,3.5) {$j+1$};
    \node at (-0.75,2.5) {$j$};
    \draw (0,2)--(1.5,2)--(1.5,6)--(0,6);
    \draw (1.5,3)--(3,3)--(3,6)--(1.5,6);
    \draw (3,4)--(4.5,4)--(4.5,6)--(3,6);
    \draw (4.5,4)--(5.5,4)--(5.5,5)--(6.5,5)--(6.5,6)--(4.5,6);
    \draw (4.5,5)--(5.5,5)--(5.5,6);
    \draw (5.5,4)--(6.5,4)--(6.5,5);
    \node at (5,4.5) {$\bigtimes$};
    \node at (6,4.5) {$\bigtimes$};
    \node at (5,5.5) {$\bigtimes$};
    \node at (6,5.5) {$\bigtimes$};
    \draw (6.5,5)--(8,5)--(8,6)--(6.5,6);
    \node at (4.5,7) {(key diagram)};
    \node at (4.5,1) {(key diagram)};
    \draw (0,7.5)--(0,0)--(9,0);
    \filldraw[draw=lightgray,fill=lightgray] (1.5,2) rectangle (8.5,3);
    \filldraw[draw=lightgray,fill=lightgray] (3,3) rectangle (8.5,4);
    \filldraw[draw=lightgray,fill=lightgray] (6.5,4) rectangle (8.5,5);
    \filldraw[draw=lightgray,fill=lightgray] (8,5) rectangle (8.5,6);
    
    \node at (4.5,-1) {\large$(b$-$i)$};
\end{tikzpicture}}$$ $$\scalebox{0.6}{\begin{tikzpicture}[scale=0.65]
    \node at (-0.75,5.5) {$j+3$};
    \node at (-0.75,4.5) {$j+2$};
    \node at (-0.75,3.5) {$j+1$};
    \node at (-0.75,2.5) {$j$};

    \draw (0,2)--(1.5,2)--(1.5,6)--(0,6);
    \draw (1.5,3)--(3,3)--(3,6)--(1.5,6);
    \draw (3,4)--(4.5,4)--(4.5,6)--(3,6);
    \draw (4.5,4)--(5.5,4)--(5.5,6)--(4.5,6);
    \draw (4.5,5)--(5.5,5);
    \draw (5.5,4)--(7,4)--(7,5)--(5.5,5);
    \draw (7,4)--(8,4)--(8,5)--(7,5);
    \node at (5,4.5) {$\bigtimes$};
    \node at (5,5.5) {$\bigtimes$};
    \node at (7.5,4.5) {$\bigtimes$};
    \node at (5,7) {(key diagram)};
    \node at (5,1) {(key diagram)};

    \draw (0,7.5)--(0,0)--(9,0);
    \filldraw[draw=lightgray,fill=lightgray] (1.5,2) rectangle (8.5,3);
    \filldraw[draw=lightgray,fill=lightgray] (3,3) rectangle (8.5,4);
    \filldraw[draw=lightgray,fill=lightgray] (8,4) rectangle (8.5,5);
    \filldraw[draw=lightgray,fill=lightgray] (5.5,5) rectangle (8.5,6);
    
    \node at (5,-1) {\large$(b$-$ii)$};
\end{tikzpicture}}\quad\quad\scalebox{0.6}{\begin{tikzpicture}[scale=0.65]
    \node at (-0.75,5.5) {$j+3$};
    \node at (-0.75,4.5) {$j+2$};
    \node at (-0.75,3.5) {$j+1$};
    \node at (-0.75,2.5) {$j$};
    \node at (5,7) {(key diagram)};
    \node at (5,1) {(key diagram)};
    \draw (0,7.5)--(0,0)--(10,0);

    \draw (0,2)--(1.5,2)--(1.5,6)--(0,6);
    \draw (1.5,2)--(3,2)--(3,3)--(1.5,3);
    \draw (3,2)--(4,2)--(4,3)--(3,3);
    \node at (3.5,2.5) {$\bigtimes$};
    \draw (1.5,4)--(3,4)--(3,6)--(1.5,6);
    \draw (3,4)--(4,4)--(4,6)--(3,6);
    \draw (3,5)--(4,5);
    \node at (3.5,4.5) {$\bigtimes$};
    \node at (3.5,5.5) {$\bigtimes$};
    \draw (4,4)--(5.5,4)--(5.5,6)--(4,6);
    \draw (5.5,4)--(6.5,4)--(6.5,6)--(5.5,6);
    \node at (6,4.5) {$\bigtimes$};
    \node at (6,5.5) {$\bigtimes$};
    \draw (6.5,4)--(8,4)--(8,5)--(6.5,5);
    \draw (9,4)--(9,4)--(9,5)--(8,5);
    \node at (8.5,4.5) {$\bigtimes$};
    \draw (5.5,5)--(6.5,5);

    \filldraw[draw=lightgray,fill=lightgray] (4,2) rectangle (9.5,3);

    \filldraw[draw=lightgray,fill=lightgray] (1.5,3) rectangle (9.5,4);

    \filldraw[draw=lightgray,fill=lightgray] (9,4) rectangle (9.5,5);

    \filldraw[draw=lightgray,fill=lightgray] (6.5,5) rectangle (9.5,6);
    
    \node at (5,-1) {\large$(b$-$iii)$};
\end{tikzpicture}}\quad\quad \scalebox{0.6}{\begin{tikzpicture}[scale=0.65]
    \node at (-0.75,5.5) {$j+3$};
    \node at (-0.75,4.5) {$j+2$};
    \node at (-0.75,3.5) {$j+1$};
    \node at (-0.75,2.5) {$j$};
    \node at (4.5,7) {(key diagram)};
    \node at (4.5,1) {(key diagram)};
    \draw (0,7.5)--(0,0)--(9,0);

    \draw (0,2)--(1.5,2)--(1.5,6)--(0,6);
    \draw (1.5,2)--(3,2)--(3,3)--(1.5,3);
    \draw (3,2)--(4,2)--(4,3)--(3,3);
    \node at (3.5,2.5) {$\bigtimes$};
    \draw (1.5,4)--(3,4)--(3,6)--(1.5,6);
    \draw (3,4)--(4,4)--(4,6)--(3,6);
    \draw (3,5)--(4,5);
    \node at (3.5,4.5) {$\bigtimes$};
    \node at (3.5,5.5) {$\bigtimes$};
    \draw (4,4)--(5.5,4)--(5.5,6)--(4,6);
    \draw (5.5,4)--(6.5,4)--(6.5,6)--(5.5,6);
    \node at (6,4.5) {$\bigtimes$};
    \node at (6,5.5) {$\bigtimes$};
    \draw (6.5,5)--(8,5)--(8,6)--(6.5,6);
    \draw (5.5,5)--(6.5,5);

    \filldraw[draw=lightgray,fill=lightgray] (4,2) rectangle (8.5,3);

    \filldraw[draw=lightgray,fill=lightgray] (1.5,3) rectangle (8.5,4);

    \filldraw[draw=lightgray,fill=lightgray] (6.5,4) rectangle (8.5,5);

    \filldraw[draw=lightgray,fill=lightgray] (8,5) rectangle (8.5,6);
    
    \node at (4.5,-1) {\large$(b$-$iv)$};
\end{tikzpicture}}$$
        \caption{Forms of key diagrams for weak compositions described in Lemma~\ref{lem:key1}}
        \label{fig:lemkeyconseca}
    \end{figure}

\begin{proof}
    $(a$-$i)$ By assumption, we have that
    \begin{itemize}
        \item $(i+1,a_{i+1}),(i+2,a_{i+2})\in \mathbb{D}(\mathbf{a})$ with $a_{i+1}<a_{i+2}$,
        \item $(i,a_{i+2}),(i+2,\tilde{c})\notin \mathbb{D}(\mathbf{a})$ for $\tilde{c}>a_{i+2}$, and
        \item $(i,a_{i+1}),(i+1,\tilde{c})\notin \mathbb{D}(\mathbf{a})$ for $\tilde{c}>a_{i+1}$.
    \end{itemize}
    Thus, applying Proposition~\ref{prop:strucasc} with $r=i$, $c_1=a_{i+1}$, and $c_2=a_{i+2}$, the result follows.

    $(a$-$ii)$ Let $D$ denote the diagram obtained from $\mathbb{D}(\mathbf{a})$ by applying $a_{i+1}-a_{i+2}+2$ Kohnert moves at row $i+1$ (see Figure~\ref{fig:key1aii})
    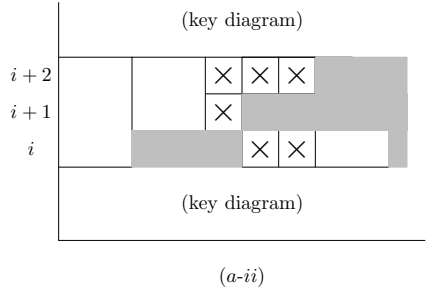
\begin{figure}[H]
        \centering
        $$\scalebox{0.75}{\begin{tikzpicture}[scale=0.65]
        \node at (-0.75,4.5) {$i+2$};
    \node at (-0.75,3.5) {$i+1$};
    \node at (-0.75,2.5) {$i$};
    \draw (0,2)--(2,2)--(2,5)--(0,5);
    \draw (2,3)--(4,3)--(4,5)--(2,5);
    \draw (4,3)--(5,3)--(5,5)--(4,5);
    \draw (5,4)--(6,4)--(6,5)--(5,5);
    \draw (6,4)--(7,4)--(7,5)--(6,5);
    \draw (7,2)--(9,2)--(9,3)--(7,3);
    \draw (0,6.5)--(0,0)--(10,0);
    \draw (5,2)--(7,2)--(7,3)--(5,3)--(5,2);
    \draw (6,2)--(6,3);
    \draw (4,4)--(5,4);
    \draw (5,4)--(7,4)--(7,5)--(5,5);
    \draw (7,4)--(8,4)--(8,5)--(7,5);
    \node at (4.5,3.5) {$\bigtimes$};
    \node at (4.5,4.5) {$\bigtimes$};
    \node at (5.5,2.5) {$\bigtimes$};
    \node at (5.5,4.5) {$\bigtimes$};
    \node at (6.5,2.5) {$\bigtimes$};
    \node at (6.5,4.5) {$\bigtimes$};
    \filldraw[draw=lightgray,fill=lightgray] (2,2) rectangle (5,3);
    \filldraw[draw=lightgray,fill=lightgray] (9,2) rectangle (9.5,3);
    \filldraw[draw=lightgray,fill=lightgray] (5,3) rectangle (9.5,4);
    \filldraw[draw=lightgray,fill=lightgray] (7,4) rectangle (9.5,5);
    \node at (5,6) {(key diagram)};
    \node at (5,1) {(key diagram)};
    \node at (5,-1) {$(a$-$ii)$};
\end{tikzpicture}}$$
        \caption{Diagram related to key diagram in case $(a$-$ii)$}
        \label{fig:key1aii}
    \end{figure}
    Note that
    \begin{itemize}
        \item $(i+1,a_{i+2}-2),(i+2,a_{i+2}-2),(i+2,a_{i+2})\in D$,
        \item $|\{(i+2,\tilde{c})\in D~|~a_{i+2}-2<\tilde{c}<a_{i+2}\}|=1>0,$
        \item $(i+2,\tilde{c})\notin D$ for $\tilde{c}>a_{i+2}$,
        \item $(i+1,\tilde{c})\notin D$ for $\tilde{c}>a_{i+2}-2$, and
        \item $(i,a_{i+2}-2)\notin D$.
    \end{itemize}
    Thus, applying Corollary~\ref{cor:strucblock} with $r=i$, $c_1=a_{i+2}-2$, and $c_2=a_{i+2}$, the result follows.

    $(b$-$i)$,$(b$-$iv)$ If $a_{j+2}<a_{j+3}$, then the result follows as in $(a$-$i)$ taking $i=j+1$. So, assume that $a_{j+2}=a_{j+3}$. Let $D$ denote the diagram obtained from $\mathbb{D}(\mathbf{a})$ by applying a single Kohnert move at row $j+2$ followed by a single Kohnert move at row $j+1$ (see Figure~\ref{fig:key1bi}).

    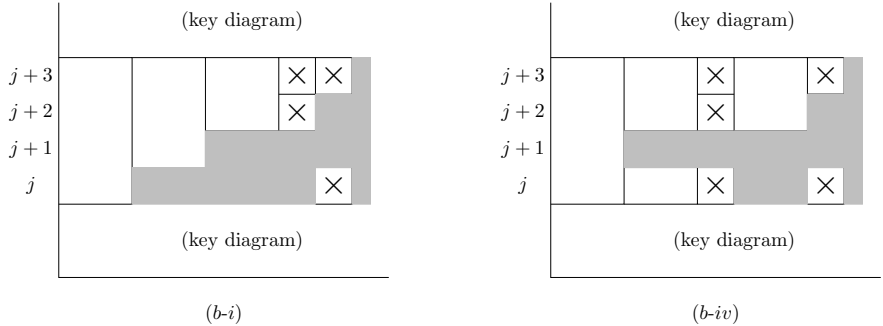
\begin{figure}[H]
        \centering
        $$\scalebox{0.75}{\begin{tikzpicture}[scale=0.65]
    \node at (-0.75,5.5) {$j+3$};
    \node at (-0.75,4.5) {$j+2$};
    \node at (-0.75,3.5) {$j+1$};
    \node at (-0.75,2.5) {$j$};
    \draw (0,2)--(2,2)--(2,6)--(0,6);
    \draw (2,3)--(4,3)--(4,6)--(2,6);
    \draw (4,4)--(6,4)--(6,6)--(4,6);
    \draw (6,4)--(7,4)--(7,5)--(8,5)--(8,6)--(6,6);
    \draw (6,5)--(7,5)--(7,6);
    \draw (7,2)--(8,2)--(8,3)--(7,3)--(7,2);
    \node at (7.5,2.5) {$\bigtimes$};
    \node at (6.5,4.5) {$\bigtimes$};
    \node at (6.5,5.5) {$\bigtimes$};
    \node at (7.5,5.5) {$\bigtimes$};
    \node at (5,7) {(key diagram)};
    \node at (5,1) {(key diagram)};
    \draw (0,7.5)--(0,0)--(9,0);
    \filldraw[draw=lightgray,fill=lightgray] (2,2) rectangle (7,3);
    \filldraw[draw=lightgray,fill=lightgray] (8,2) rectangle (8.5,3);
    \filldraw[draw=lightgray,fill=lightgray] (4,3) rectangle (8.5,4);
    \filldraw[draw=lightgray,fill=lightgray] (7,4) rectangle (8.5,5);
    \filldraw[draw=lightgray,fill=lightgray] (8,5) rectangle (8.5,6);
    
    \node at (4.5,-1) {$(b$-$i)$};
\end{tikzpicture}}\quad\quad\quad\quad \scalebox{0.75}{\begin{tikzpicture}[scale=0.65]
    \node at (-0.75,5.5) {$j+3$};
    \node at (-0.75,4.5) {$j+2$};
    \node at (-0.75,3.5) {$j+1$};
    \node at (-0.75,2.5) {$j$};
    \node at (5,7) {(key diagram)};
    \node at (5,1) {(key diagram)};
    \draw (0,7.5)--(0,0)--(9,0);

    \draw (0,2)--(2,2)--(2,6)--(0,6);
    \draw (2,2)--(4,2)--(4,3)--(2,3);
    \draw (4,2)--(5,2)--(5,3)--(4,3);
    \node at (4.5,2.5) {$\bigtimes$};
    \node at (4.5,4.5) {$\bigtimes$};
    \draw (2,4)--(4,4)--(4,6)--(2,6);
    \draw (4,4)--(5,4)--(5,6)--(4,6);
    \draw (4,5)--(5,5);
    \node at (4.5,5.5) {$\bigtimes$};
    \draw (5,4)--(7,4)--(7,6)--(5,6);
    \draw (7,5)--(8,5)--(8,6)--(7,6);
    \node at (7.5,5.5) {$\bigtimes$};

    \draw (7,2)--(8,2)--(8,3)--(7,3)--(7,2);
    \node at (7.5,2.5) {$\bigtimes$};
    \draw (7,5)--(8,5);

    \filldraw[draw=lightgray,fill=lightgray] (5,2) rectangle (7,3);
    \filldraw[draw=lightgray,fill=lightgray] (8,2) rectangle (8.5,3);

    \filldraw[draw=lightgray,fill=lightgray] (2,3) rectangle (8.5,4);

    \filldraw[draw=lightgray,fill=lightgray] (7,4) rectangle (8.5,5);

    \filldraw[draw=lightgray,fill=lightgray] (8,5) rectangle (8.5,6);
    
    \node at (4.5,-1) {$(b$-$iv)$};
\end{tikzpicture}}$$
        \caption{Diagrams related to key diagram in cases $(b$-$i)$ (left) and $(b$-$iv)$ (right)}
        \label{fig:key1bi}
    \end{figure}
    \noindent
    By assumption, 
    \begin{itemize}
        \item $(j+3,a_{j+3}),(j+2,a_{j+3}-1)\in D$,
        \item $(j+1,a_{j+3}),(j+3,\tilde{c})\notin D$ for $\tilde{c}>a_{j+3}$, and
        \item $(j+1,a_{j+3}-1),(j+2,\tilde{c})\notin D$ for $\tilde{c}>a_{j+3}-1$.
    \end{itemize}
    Thus, applying Proposition~\ref{prop:strucasc} with $r=j+1$, $c_1=a_{j+3}-1$, and $c_2=a_{j+3}$, the result follows.

    $(b$-$ii)$,$(b$-$iii)$ Let $D$ denote the diagram obtained from $\mathbb{D}(\mathbf{a})$ by applying a single Kohnert move at row $j+3$ (see Figure~\ref{fig:key1bii}).
    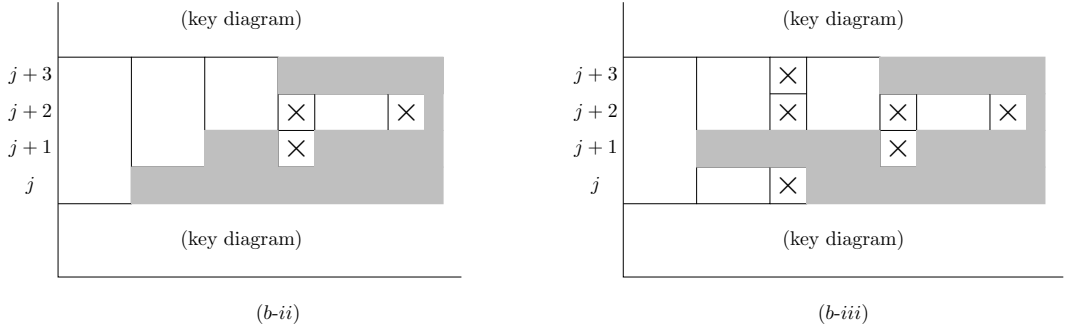
\begin{figure}[H]
        \centering
        $$\scalebox{0.75}{\begin{tikzpicture}[scale=0.65]
    \node at (-0.75,5.5) {$j+3$};
    \node at (-0.75,4.5) {$j+2$};
    \node at (-0.75,3.5) {$j+1$};
    \node at (-0.75,2.5) {$j$};

    \draw (0,2)--(2,2)--(2,6)--(0,6);
    \draw (2,3)--(4,3)--(4,6)--(2,6);
    \draw (4,4)--(6,4)--(6,6)--(4,6);
    \draw (6,4)--(7,4)--(7,5)--(6,5);
    \draw (6,5)--(7,5);
    \draw (7,4)--(9,4)--(9,5)--(7,5);
    \draw (9,4)--(10,4)--(10,5)--(9,5);
    \node at (6.5,3.5) {$\bigtimes$};
    \draw (6,4)--(6,3)--(7,3)--(7,4);
    \node at (6.5,4.5) {$\bigtimes$};
    \node at (9.5,4.5) {$\bigtimes$};
    \node at (5,7) {(key diagram)};
    \node at (5,1) {(key diagram)};

    \draw (0,7.5)--(0,0)--(11,0);
    \filldraw[draw=lightgray,fill=lightgray] (2,2) rectangle (10.5,3);
    \filldraw[draw=lightgray,fill=lightgray] (4,3) rectangle (6,4);
    \filldraw[draw=lightgray,fill=lightgray] (7,3) rectangle (10.5,4);
    \filldraw[draw=lightgray,fill=lightgray] (10,4) rectangle (10.5,5);
    \filldraw[draw=lightgray,fill=lightgray] (6,5) rectangle (10.5,6);
    
    \node at (6,-1) {$(b$-$ii)$};
\end{tikzpicture}}\quad\quad\quad\quad \scalebox{0.75}{\begin{tikzpicture}[scale=0.65]
    \node at (-0.75,5.5) {$j+3$};
    \node at (-0.75,4.5) {$j+2$};
    \node at (-0.75,3.5) {$j+1$};
    \node at (-0.75,2.5) {$j$};
    \node at (6,7) {(key diagram)};
    \node at (6,1) {(key diagram)};
    \draw (0,7.5)--(0,0)--(12,0);

    \draw (0,2)--(2,2)--(2,6)--(0,6);
    \draw (2,2)--(4,2)--(4,3)--(2,3);
    \draw (4,2)--(5,2)--(5,3)--(4,3);
    \node at (4.5,2.5) {$\bigtimes$};
    \draw (2,4)--(4,4)--(4,6)--(2,6);
    \draw (4,4)--(5,4)--(5,6)--(4,6);
    \draw (4,5)--(5,5);
    \node at (4.5,4.5) {$\bigtimes$};
    \node at (4.5,5.5) {$\bigtimes$};
    \draw (5,4)--(7,4)--(7,6)--(5,6);
    \draw (7,4)--(8,4)--(8,5)--(7,5);
    \node at (7.5,4.5) {$\bigtimes$};
    \node at (7.5,3.5) {$\bigtimes$};
    \draw (7,4)--(7,3)--(8,3)--(8,4);
    \draw (8,4)--(10,4)--(10,5)--(8,5);
    \draw (10,4)--(11,4)--(11,5)--(10,5);
    \node at (10.5,4.5) {$\bigtimes$};
    \draw (7,5)--(8,5);

    \filldraw[draw=lightgray,fill=lightgray] (5,2) rectangle (11.5,3);

    \filldraw[draw=lightgray,fill=lightgray] (2,3) rectangle (7,4);
    \filldraw[draw=lightgray,fill=lightgray] (8,3) rectangle (11.5,4);

    \filldraw[draw=lightgray,fill=lightgray] (11,4) rectangle (11.5,5);

    \filldraw[draw=lightgray,fill=lightgray] (7,5) rectangle (11.5,6);
    
    \node at (6,-1) {$(b$-$iii)$};
\end{tikzpicture}}$$
        \caption{Diagrams related to key diagram in cases $(b$-$ii)$ (left) and $(b$-$iii)$ (right)}
        \label{fig:key1bii}
    \end{figure}
    \noindent
    By assumption, 
    \begin{itemize}
        \item $(j+1,a_{j+3}),(j+2,a_{j+2})\in D$ with $a_{j+3}<a_{j+2}$,
        \item $(j,a_{j+2}),(j+2,\tilde{c})\notin D$ for $\tilde{c}>a_{j+2}$, and
        \item $(j,a_{j+3}),(j+1,\tilde{c})\notin D$ for $\tilde{c}>a_{j+3}$.
    \end{itemize}
    Thus, applying Proposition~\ref{prop:strucasc} with $r=j$, $c_1=a_{j+3}$, and $c_2=a_{j+2}$, the result follows.
\end{proof}

Now, to prove Proposition~\ref{prop:keynecrank}, we make use of the following result from \textbf{\cite{KPoset1}}. For notation, given a weak composition $\mathbf{a}=(a_1,\hdots,a_n)$, we denote by $\mathbf{a}s_{i,j}$ the weak composition obtained from $\mathbf{a}$ by exchanging the entries $a_i$ and $a_j$; that is, $\mathbf{a}s_{i,j} =(a_1,\hdots,a_{i-1}, a_j,a_{i+1}, \hdots, a_{j-1}, a_i, a_{j+1}, \hdots, a_n).$

\begin{lemma}[Lemma 6.15, \textbf{\cite{KPoset1}}]\label{lem:permute}
    Let $\mathbf{a}=(a_1,\hdots,a_n)$ be a weak composition. If there exist $i<j$ such that $a_i<a_j$, then $\mathbb{D}(\mathbf{a}s_{i,j})\in \mathcal{P}(\mathbb{D}(\mathbf{a}))$.
\end{lemma}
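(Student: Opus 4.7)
The plan is to construct an explicit sequence of Kohnert moves carrying $\mathbb{D}(\mathbf{a})$ to $\mathbb{D}(\mathbf{a}s_{i,j})$, which immediately gives $\mathbb{D}(\mathbf{a}s_{i,j}) \in \mathcal{P}(\mathbb{D}(\mathbf{a}))$. The two diagrams agree except in columns $c$ with $a_i < c \le a_j$; in each such column $\mathbb{D}(\mathbf{a})$ has a cell at $(j,c)$ but not at $(i,c)$, while $\mathbb{D}(\mathbf{a}s_{i,j})$ has the reverse. It therefore suffices to describe, for each such $c$, a sequence of Kohnert moves that transfers a single cell from $(j,c)$ down to $(i,c)$ without permanently altering any row other than $i$ and $j$.

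I would process these columns in decreasing order $c = a_j, a_j-1, \ldots, a_i+1$. For a fixed $c$, let $i = k_1 < k_2 < \cdots < k_m$ enumerate those $k \in \{i, i+1, \ldots, j-1\}$ with $a_k < c$ (equivalently, the rows in $[i,j)$ that are empty in column $c$ of $\mathbb{D}(\mathbf{a})$). The \emph{bubble-down sequence} for column $c$ is to apply Kohnert moves at rows $j,\ k_m,\ k_{m-1},\ \ldots,\ k_2$ in that order. Because every intervening row $r$ with $k_s < r < k_{s+1}$ satisfies $a_r \ge c$, the cell $(r,c)$ is occupied and is jumped over during the move, so the sequence realizes the tower $(j,c) \to (k_m,c) \to (k_{m-1},c) \to \cdots \to (k_1,c) = (i,c)$ inside column $c$.

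Correctness would then be established by induction on the number of columns processed, with the invariant that after finishing columns $c+1, \ldots, a_j$ the current diagram agrees with $\mathbb{D}(\mathbf{a}s_{i,j})$ in every column $\ge c+1$ and with $\mathbb{D}(\mathbf{a})$ in every column $\le c$. Under this invariant three facts required by the bubble-down all hold: (i) the rightmost cell of row $j$ at the start of column-$c$ processing is $(j,c)$, since columns $>c$ of row $j$ are empty; (ii) when the bubble reaches $(k_s,c)$, row $k_s$ contains exactly $\{(k_s,1),\ldots,(k_s,a_{k_s})\} \cup \{(k_s,c)\}$, and since $c > a_{k_s}$ the cell $(k_s,c)$ is rightmost; and (iii) the first empty position below $(k_s,c)$ in column $c$ is $(k_{s-1},c)$, straight from the definition of the $k_t$. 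Thus each move is legal and lands where predicted.

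The main obstacle is justifying (ii), i.e.\ that rows $k_s$ with $s>1$ are back to their original $\mathbb{D}(\mathbf{a})$-contents by the time column $c$ is reached, so that the invariant propagates from one column to the next. The key point is that each such $k_s$ only transiently hosts a column-$c$ cell: the very next Kohnert move in the sequence immediately removes $(k_s,c)$ and carries it to $(k_{s-1},c)$. Consequently the only \emph{permanent} changes caused by processing a column are to rows $i$ and $j$, preserving the invariant. Once all columns in $\{a_i+1, \ldots, a_j\}$ have been processed, the resulting diagram is exactly $\mathbb{D}(\mathbf{a}s_{i,j})$, completing the argument.
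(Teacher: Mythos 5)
The paper does not prove this lemma; it is imported verbatim from \textbf{\cite{KPoset1}} (Lemma 6.15), so there is no in-paper argument to compare against. Your construction is correct and complete: processing the columns $a_j, a_j-1,\dots,a_i+1$ in decreasing order keeps the rightmost cell of row $j$ in exactly the column being processed, and the bubble-down through the empty rows $k_m>\dots>k_1=i$ is legal at each step because every intermediate row $k_s$ carries the transient cell as its rightmost entry and sheds it on the very next move, so only rows $i$ and $j$ change permanently. This is the natural direct proof of the cited result, and supplying it makes the paper's use of the lemma self-contained.
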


\begin{proof}[Proof of Proposition~\ref{prop:keynecrank}]
We include only the proofs for cases $(a$-$i)$ and $(b$-$i)$ as the remaining cases following via very similar arguments. In both cases, we show that there exists a key diagram $T\in \mathcal{P}(\mathbb{D}(\mathbf{a}))$ such that $T$ has one of the patterns described in Lemma~\ref{lem:key1}. Consequently, applying Lemma~\ref{lem:key1}, it will follow that $\mathcal{P}(\mathbb{D}(\mathbf{a}))$ contains an interval that is not shellable -- namely, the interval between $T$ and the unique minimal element of $\mathcal{P}(\mathbb{D}(\mathbf{a})).$ Considering Theorem~\ref{thm:shellinterval}, we may thus conclude that $\mathcal{P}(\mathbb{D}(\mathbf{a}))$ is not shellable.
\bigskip

\noindent
$(a$-$i)$ Let $i_1$ be maximal, $i_2$ be arbitrary given our choice of $i_1$, and $i_3$ minimal given our choice of $i_2$. If $i_1<i_2-1$, then $a_{i_2-1}\ge a_{i_2}>a_{i_1}$ by maximality of $i_1$. Thus, applying Lemma~\ref{lem:permute}, $\mathbb{D}(\mathbf{a}s_{i_1,i_2-1})\in \mathcal{P}(\mathbb{D}(\mathbf{a}))$. Now, if $i_3>i_2+1$, then $a_{i_2+1}\le a_{i_2}<a_{i_3}$ by minimality of $i_3$. Consequently, applying Lemma~\ref{lem:permute}, we find that $$\mathbb{D}(\mathbf{a}s_{i_1,i_2-1}s_{i_2+1,i_3})\in \mathcal{P}(\mathbb{D}(\mathbf{a}s_{i_1,i_2-1}))\subseteq \mathcal{P}(\mathbb{D}(\mathbf{a})),$$ i.e., $\mathbb{D}(\mathbf{a}s_{i_1,i_2-1}s_{i_2+1,i_3})\in \mathcal{P}(\mathbb{D}(\mathbf{a}))$. As the values $a_{i_1},a_{i_2},a_{i_3}$ occur as terms $i_2-1, i_2,$ and $i_2+1,$ respectively, in $\mathbf{a}s_{i_1,i_2-1}s_{i_2+1,i_3}$, the result follows.
\bigskip

\noindent
$(b$-$i)$ Let $j_1$ be maximal, $j_2$ be arbitrary given our choice of $j_1$, $j_3$ be minimal given our choice of $j_2$, and $j_4$ be arbitrary given our choice of $j_3$. Note that if $j_1<j_2-1$, then $a_{j_2-1}>a_{j_2}\ge a_{j_1}$ by maximality of $j_1$; and if $j_2<j_3-1$, then for $j_2<i<j_3$ we have $a_i\neq a_{j_3}$ by minimality of $j_3$. Let $j_3^*$ be minimal such that $j_2<j_3^*\le j_3$ and $a_{j_3^*}\le a_{j_3}$. By our choice of $j_3^*,$ for $j_2<j<j_3^*$ we have $a_j>a_{j_3}>a_{j_2}$; in particular, if $j_3^*-1\neq j_2,$ then $a_{j_3^*-1}>a_{j_2}$. Thus, if we set $\mathbf{b}=\mathbf{a}s_{j_2,j_3^*-1}s_{j_3^*,j_3}$ and apply Lemma~\ref{lem:permute} twice, it follows that $\mathbb{D}(\mathbf{b})\in \mathcal{P}(\mathbb{D}(\mathbf{a})).$ Now, by construction, we have that $b_i=a_i>a_{j_1}$ for $j_1<i<j_2$, $b_{j_2}=a_{j_3^*-1}\ge a_{j_2}\ge a_{j_1}$, and $b_{i}>a_{j_3}> a_{j_1}$ for $j_2<i<j_3^*-1$. Consequently, if we set $\mathbf{c}=\mathbf{b}s_{j_1,j_3^*-2}$, then an application of Lemma~\ref{lem:permute} shows that $\mathbb{D}(\mathbf{c})\in \mathcal{P}(\mathbb{D}(\mathbf{a}))$. Finally, if $c_{j^*_3+1}\ge a_{j_4}$, then set $\mathbf{d}=\mathbf{c}$; otherwise, set $\mathbf{d}=\mathbf{c}s_{j^*_3+1,j_4}$ and note that since $j_3^*+1< j_4$ and $c_{j_4}=a_{j_4}$, Lemma~\ref{lem:permute} implies that $\mathbb{D}(\mathbf{d})\in \mathcal{P}(\mathbb{D}(\mathbf{a}))$. By construction, $$a_{j_1}=d_{j_3^*-2}\le a_{j_2}=d_{j_3^*-1}<a_{j_3}-1=d_{j_3^*}-1\le a_{j_4}-1\le d_{j_3^*+1}-1;$$ that is, $d_{j_3^*-2},d_{j_3^*-1},d_{j_3^*},$ and $d_{j_3^*+1}$ occur as consecutive terms in $\mathbf{d}$ and form the pattern described in ($b$-$i$). The result follows.
\end{proof}

\subsection{Sufficiency}

In this section, we finish the proof of Theorem~\ref{thm:kshell} by establishing that the key diagrams of pure compositions generate (EL-)shellable Kohnert posets. To do so, given a pure composition $\mathbf{a}$, we show that the Kohnert poset $\mathcal{P}(\mathbb{D}(\mathbf{a}))$ decomposes into a direct product of graded, (EL-)shellable posets. In particular, we start by showing that, for a pure composition $\mathbf{a}$ with pure decomposition $(\alpha_1,\dots,\alpha_m),$ the poset $\mathcal{P}(\mathbb{D}(\mathbf{a}))$ is isomorphic to the direct product $\mathcal{P}(\mathbb{D}(\alpha_1))\times\dots\times\mathcal{P}(\mathbb{D}(\alpha_m))$ (see Proposition~\ref{prop:kdp}). Then we show that the Kohnert posets $\mathcal{P}(\mathbb{D}(\alpha_i))$ for $1\le i\le m$ are isomorphic to intervals within Kohnert posets of hook diagrams (see Proposition~\ref{prop:purecomppieces}) and, consequently, are graded and (EL-)shellable. Since the direct product of graded and (EL-)shellable is graded and (EL-)shellable by Theorem~\ref{thm:pshell} below, it will follow that $\mathcal{P}(\mathbb{D}(\mathbf{a}))$ is graded and (EL-)shellable.

\begin{prop}\label{prop:kdp}
    Let $\mathbf{a}$ be a pure composition and assume that $\mathbf{a}=(\alpha_1,\hdots,\alpha_m)$ is a pure decomposition of $\mathbf{a}$. Then $\mathcal{P}(\mathbb{D}(\mathbf{a}))\cong\mathcal{P}(\mathbb{D}(\alpha_1))\times\hdots\times \mathcal{P}(\mathbb{D}(\alpha_m))$.
\end{prop}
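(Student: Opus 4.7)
The plan is to set up an explicit block-wise isomorphism. Writing $\alpha_j=(a_{i_j},\ldots,a_{i_{j+1}-1})$ with $i_1=1$ and $i_{m+1}=n+1$, and setting $M_j=\max(\alpha_j)$, I would define $\phi\colon\mathcal{P}(\mathbb{D}(\mathbf{a}))\to\mathcal{P}(\mathbb{D}(\alpha_1))\times\cdots\times\mathcal{P}(\mathbb{D}(\alpha_m))$ by $\phi(D)=(D^{(1)},\ldots,D^{(m)})$, where $D^{(j)}$ is the restriction of $D$ to rows $i_j,\ldots,i_{j+1}-1$, shifted down by $i_j-1$ so as to occupy rows $1,\ldots,i_{j+1}-i_j$. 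The inverse map will simply stack a given tuple back together at the prescribed row ranges.

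The heart of the argument is a block-invariance lemma: for every $D\in\mathcal{P}(\mathbb{D}(\mathbf{a}))$, every $1\le j\le m$, and every $c\le M_j$, the cell $(r,c)$ lies in $D$ for all $1\le r\le i_j-1$. To establish this I fix $c$ and track $N_r(D):=|\{r'\le r:(r',c)\in D\}|$; a Kohnert move either leaves column $c$ untouched or slides a cell of column $c$ strictly downward, so $N_r$ is nondecreasing along any chain emanating from $\mathbb{D}(\mathbf{a})$. The iterated inequalities $\min(\alpha_{j'})\ge\max(\alpha_{j'+1})\ge\cdots\ge\max(\alpha_j)=M_j\ge c$ (valid for all $j'<j$, since each $\max(\alpha_k)\ge\min(\alpha_k)\ge\max(\alpha_{k+1})$) supplied by the pure decomposition give $a_r\ge c$ for every $r\le i_j-1$, so $N_{i_j-1}(\mathbb{D}(\mathbf{a}))=i_j-1$. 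Since always $N_{i_j-1}(D)\le i_j-1$, equality is forced and the lemma follows.

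Once the lemma is in hand, Kohnert moves respect the block decomposition: a nontrivial Kohnert move applied at a row $r\in\{i_j,\ldots,i_{j+1}-1\}$ slides the rightmost cell of row $r$ in some column $c\le a_r\le M_j$, and the lemma implies that positions $(1,c),\ldots,(i_j-1,c)$ are all occupied in $D$, so the first empty position below $r$ in column $c$ lies among $\{i_j,\ldots,r-1\}$. Consequently, every Kohnert move on $D$ translates under $\phi$ into a single Kohnert move on exactly one coordinate of $\phi(D)$, and conversely every coordinate-wise Kohnert move lifts to a Kohnert move on $D$ with the same effect. From this the well-definedness, bijectivity, and order-preservation of $\phi$ all follow: given any chain realizing $D_2\prec D_1$ one reads off, block by block, chains realizing $D_2^{(j)}\preceq D_1^{(j)}$, and given such collections of chains in the factors, concatenating them (in any order) produces a chain realizing $D_2\prec D_1$ in $\mathcal{P}(\mathbb{D}(\mathbf{a}))$.

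The main obstacle is the block-invariance lemma: once one knows that every column of width at most $M_j$ remains completely filled below block $\alpha_j$, the rest is bookkeeping. The subtle point is that the argument genuinely requires the strict condition $\min(\alpha_{j-1})\ge\max(\alpha_j)$ from the definition of a pure decomposition, rather than merely the local comparisons between consecutive entries of $\mathbf{a}$, in order to chain the inequalities all the way down to row $i_j-1$.
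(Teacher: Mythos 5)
Your proposal is correct and follows essentially the same route as the paper: the same block restriction-and-shift map, and the same key structural fact that all columns $c\le\max(\alpha_j)$ remain completely filled below block $j$ (so Kohnert moves never cross block boundaries), from which bijectivity and order-preservation in both directions follow by the same bookkeeping. Your explicit monotone-counting argument via $N_r$ is a slightly more detailed justification of the invariance that the paper asserts more tersely, but it is the same idea.
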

\begin{proof}
    Assume that $\alpha_j=(a_1^j,\hdots,a^j_{m_j})$ for $1\le j\le m$. Let $m_0=0$ and define $\psi$ on diagrams $D\in \mathcal{P}(\mathbb{D}(\mathbf{a}))$ by $\psi(D)=(D_1,\hdots,D_m)$, where $$D_j=\left\{(r,c)~\left|~1+\sum_{i=0}^{j-1}m_i\le r^*=r+\sum_{i=0}^{j-1}m_i\le \sum_{i=0}^{j}m_i~\text{and}~(r^*,c)\in D\right.\right\}$$ for $1\le j\le m$; that is, $D_j$ is the diagram formed by shifting the cells occupying rows $1+\sum_{i=0}^{j-1}m_i\le r\le\sum_{i=0}^{j}m_i$ of $\mathbb{D}(\mathbf{a})$ so that they occupy rows $1\le r\le m_j$. As an example, in Figure~\ref{fig:psi} we illustrate $\psi(\mathbb{D}(\mathbf{a}))$ where $$\mathbf{a}=(6,5,4,5,4,3,5,3,1,3,1,0,1)=(\alpha_1,\alpha_2,\alpha_3,\alpha_4)$$ with $\alpha_1=(6,5)$, $\alpha_2=(4,5,4,3,5)$, $\alpha_3=(3,1,3)$, and $\alpha_4=(1,0,1)$.
    \begin{figure}[H]
        \centering
        $$\scalebox{0.9}{\begin{tikzpicture}
            \node at (0,0) {\scalebox{0.4}{\begin{tikzpicture}
            \draw (0,13.5)--(0,0)--(6.5,0);
            \node at (0.5,0.5) {$\bigtimes$};
            \node at (1.5,0.5) {$\bigtimes$};
            \node at (2.5,0.5) {$\bigtimes$};
            \node at (3.5,0.5) {$\bigtimes$};
            \node at (4.5,0.5) {$\bigtimes$};
            \node at (5.5,0.5) {$\bigtimes$};
            \draw (0,1)--(6,1)--(6,0);
            \draw (1,0)--(1,2);
            \draw (2,0)--(2,2);
            \draw (3,0)--(3,2);
            \draw (4,0)--(4,2);
            \draw (5,0)--(5,1);
            \draw (0,2)--(5,2)--(5,1);
            \node at (0.5,1.5) {$\bigtimes$};
            \node at (1.5,1.5) {$\bigtimes$};
            \node at (2.5,1.5) {$\bigtimes$};
            \node at (3.5,1.5) {$\bigtimes$};
            \node at (4.5,1.5) {$\bigtimes$};
            \draw (0,3)--(4,3)--(4,2);
            \node at (0.5,2.5) {$\bigtimes$};
            \node at (1.5,2.5) {$\bigtimes$};
            \node at (2.5,2.5) {$\bigtimes$};
            \node at (3.5,2.5) {$\bigtimes$};
            \draw (1,2)--(1,3);
            \draw (2,2)--(2,3);
            \draw (3,2)--(3,3);
            \draw (0,4)--(5,4)--(5,3)--(4,3);
            \node at (0.5,3.5) {$\bigtimes$};
            \node at (1.5,3.5) {$\bigtimes$};
            \node at (2.5,3.5) {$\bigtimes$};
            \node at (3.5,3.5) {$\bigtimes$};
            \node at (4.5,3.5) {$\bigtimes$};
            \draw (1,3)--(1,4);
            \draw (2,3)--(2,4);
            \draw (3,3)--(3,4);
            \draw (4,3)--(4,4);
            \draw (0,5)--(4,5)--(4,4);
            \node at (0.5,4.5) {$\bigtimes$};
            \node at (1.5,4.5) {$\bigtimes$};
            \node at (2.5,4.5) {$\bigtimes$};
            \node at (3.5,4.5) {$\bigtimes$};
            \draw (1,4)--(1,5);
            \draw (2,4)--(2,5);
            \draw (3,4)--(3,5);
            \draw (0,6)--(3,6)--(3,5);
            \node at (0.5,5.5) {$\bigtimes$};
            \node at (1.5,5.5) {$\bigtimes$};
            \node at (2.5,5.5) {$\bigtimes$};
            \draw (1,5)--(1,6);
            \draw (2,5)--(2,6);
            \draw (3,5)--(3,6);
            \draw (0,7)--(5,7)--(5,6)--(3,6);
            \node at (0.5,6.5) {$\bigtimes$};
            \node at (1.5,6.5) {$\bigtimes$};
            \node at (2.5,6.5) {$\bigtimes$};
            \node at (3.5,6.5) {$\bigtimes$};
            \node at (4.5,6.5) {$\bigtimes$};
            \draw (1,6)--(1,7);
            \draw (2,6)--(2,7);
            \draw (3,6)--(3,7);
            \draw (4,6)--(4,7);
            \draw (0,8)--(3,8)--(3,7);
            \node at (0.5,7.5) {$\bigtimes$};
            \node at (1.5,7.5) {$\bigtimes$};
            \node at (2.5,7.5) {$\bigtimes$};
            \draw (1,7)--(1,8);
            \draw (2,7)--(2,8);
            \draw (3,7)--(3,8);
            \draw (0,9)--(1,9)--(1,8);
            \node at (0.5,8.5) {$\bigtimes$};
            \draw (0,10)--(3,10)--(3,9)--(1,9);
            \node at (0.5,9.5) {$\bigtimes$};
            \node at (1.5,9.5) {$\bigtimes$};
            \node at (2.5,9.5) {$\bigtimes$};
            \draw (1,9)--(1,10);
            \draw (2,9)--(2,10);
            \draw (3,9)--(3,10);
            \draw (0,11)--(1,11)--(1,10);
            \node at (0.5,10.5) {$\bigtimes$};
            \draw (0,12)--(1,12)--(1,13)--(0,13);
            \node at (0.5,12.5) {$\bigtimes$};
        \end{tikzpicture}}};
        \draw[->] (2,0)--(3,0);
        \node at (2.5, 0.5) {$\psi$};
        \node at (6,-0.63) {\scalebox{0.4}{\begin{tikzpicture}
            \draw (0,2.5)--(0,0)--(6.5,0);
            \node at (0.5,0.5) {$\bigtimes$};
            \node at (1.5,0.5) {$\bigtimes$};
            \node at (2.5,0.5) {$\bigtimes$};
            \node at (3.5,0.5) {$\bigtimes$};
            \node at (4.5,0.5) {$\bigtimes$};
            \node at (5.5,0.5) {$\bigtimes$};
            \draw (0,1)--(6,1)--(6,0);
            \draw (1,0)--(1,2);
            \draw (2,0)--(2,2);
            \draw (3,0)--(3,2);
            \draw (4,0)--(4,2);
            \draw (5,0)--(5,1);
            \draw (0,2)--(5,2)--(5,1);
            \node at (0.5,1.5) {$\bigtimes$};
            \node at (1.5,1.5) {$\bigtimes$};
            \node at (2.5,1.5) {$\bigtimes$};
            \node at (3.5,1.5) {$\bigtimes$};
            \node at (4.5,1.5) {$\bigtimes$};
        \end{tikzpicture}}};
        \node at (7.75,-1.2) {,};
        \node at (9.5,-0.05) {\scalebox{0.4}{\begin{tikzpicture}
            \draw (0,5.5)--(0,0)--(5.5,0);
            \draw (0,1)--(4,1)--(4,0);
            \node at (0.5,0.5) {$\bigtimes$};
            \node at (1.5,0.5) {$\bigtimes$};
            \node at (2.5,0.5) {$\bigtimes$};
            \node at (3.5,0.5) {$\bigtimes$};
            \draw (1,0)--(1,1);
            \draw (2,0)--(2,1);
            \draw (3,0)--(3,1);
            \draw (0,2)--(5,2)--(5,1)--(4,1);
            \node at (0.5,1.5) {$\bigtimes$};
            \node at (1.5,1.5) {$\bigtimes$};
            \node at (2.5,1.5) {$\bigtimes$};
            \node at (3.5,1.5) {$\bigtimes$};
            \node at (4.5,1.5) {$\bigtimes$};
            \draw (1,1)--(1,2);
            \draw (2,1)--(2,2);
            \draw (3,1)--(3,2);
            \draw (4,1)--(4,2);
            \draw (0,3)--(4,3)--(4,2);
            \node at (0.5,2.5) {$\bigtimes$};
            \node at (1.5,2.5) {$\bigtimes$};
            \node at (2.5,2.5) {$\bigtimes$};
            \node at (3.5,2.5) {$\bigtimes$};
            \draw (1,2)--(1,3);
            \draw (2,2)--(2,3);
            \draw (3,2)--(3,3);
            \draw (0,4)--(3,4)--(3,3);
            \node at (0.5,3.5) {$\bigtimes$};
            \node at (1.5,3.5) {$\bigtimes$};
            \node at (2.5,3.5) {$\bigtimes$};
            \draw (1,3)--(1,4);
            \draw (2,3)--(2,4);
            \draw (3,3)--(3,4);
            \draw (0,5)--(5,5)--(5,4)--(3,4);
            \node at (0.5,4.5) {$\bigtimes$};
            \node at (1.5,4.5) {$\bigtimes$};
            \node at (2.5,4.5) {$\bigtimes$};
            \node at (3.5,4.5) {$\bigtimes$};
            \node at (4.5,4.5) {$\bigtimes$};
            \draw (1,4)--(1,5);
            \draw (2,4)--(2,5);
            \draw (3,4)--(3,5);
            \draw (4,4)--(4,5);
        \end{tikzpicture}}};
        \node at (11.2,-1.2) {,};
        \node at (12.5,-0.45) {\scalebox{0.4}{\begin{tikzpicture}
            \draw (0,3.5)--(0,0)--(3.5,0);
            \draw (0,1)--(3,1)--(3,0);
            \node at (0.5,0.5) {$\bigtimes$};
            \node at (1.5,0.5) {$\bigtimes$};
            \node at (2.5,0.5) {$\bigtimes$};
            \draw (1,0)--(1,1);
            \draw (2,0)--(2,1);
            \draw (3,0)--(3,1);
            \draw (0,2)--(1,2)--(1,1);
            \node at (0.5,1.5) {$\bigtimes$};
            \draw (0,3)--(3,3)--(3,2)--(1,2);
            \node at (0.5,2.5) {$\bigtimes$};
            \node at (1.5,2.5) {$\bigtimes$};
            \node at (2.5,2.5) {$\bigtimes$};
            \draw (1,2)--(1,3);
            \draw (2,2)--(2,3);
            \draw (3,2)--(3,3);
        \end{tikzpicture}}};
        \node at (13.5,-1.2) {,};
        \node at (14.5,-0.45) {\scalebox{0.4}{\begin{tikzpicture}
            \draw (0,3.5)--(0,0)--(1.5,0);
            \draw (0,1)--(1,1)--(1,0);
            \node at (0.5,0.5) {$\bigtimes$};
            \draw (0,2)--(1,2)--(1,3)--(0,3);
            \node at (0.5,2.5) {$\bigtimes$};
        \end{tikzpicture}}};
        \draw (4.2, -1.5) to[bend left](4,1.5);
        \draw (15.2, -1.5) to[bend right](15.2,1.5);
        \end{tikzpicture}}$$
        \caption{$\psi(\mathbb{D}(\mathbf{a}))$}
        \label{fig:psi}
    \end{figure}
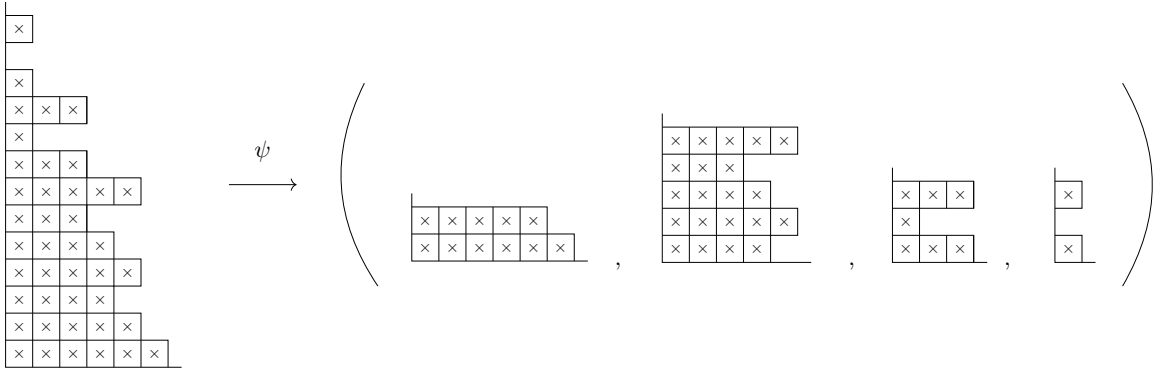
    \noindent
    We claim that $\psi$ defines an isomorphism between the posets $\mathcal{P}(\mathbb{D}(\mathbf{a}))$ and $\mathcal{P}(\mathbb{D}(\alpha_1))\times\hdots\times \mathcal{P}(\mathbb{D}(\alpha_m))$.

    First, we show that $$\mathcal{P}(\mathbb{D}(\alpha_1))\times\hdots\times \mathcal{P}(\mathbb{D}(\alpha_m))\subseteq \text{im}~\psi.$$ By definition, $\psi(\mathbb{D}(\mathbf{a}))=(\mathbb{D}(\alpha_1),\hdots,\mathbb{D}(\alpha_m))\in\text{im}~\psi$. Consequently, it suffices to show that if $$(D_1,\hdots, D_j,\hdots,D_m)\in \text{im}~\psi$$ and $D^*_j$ can be formed from $D_j$ by applying a single Kohnert move for $1\le j\le m$, then  $$(D_1,\hdots, D^*_j,\hdots,D_m)\in \text{im}~\psi.$$ Let $D\in \mathcal{P}(\mathbb{D}(\mathbf{a}))$ satisfy $\psi(D)=(D_1,\hdots, D_j,\hdots,D_m)$. Assume that $D_j^*$ can be formed from $D_j$ by applying a single Kohnert move at row $r$ with $D_j^*=D_j\ldownarrow^{(r,c)}_{(r',c)}$. Then $(r,c)$ is rightmost in row $r$ of $D_j$, $(\tilde{r},c)\in D_j$ for all $r'<\tilde{r}< r$, and $(r',c)\notin D_j$. Thus, by the definition of $\psi$, it follows that 
    \begin{enumerate}
        \item $(r'+\sum_{i=0}^{j-1}m_i,c)\notin D$,
        \item $(\tilde{r}+\sum_{i=0}^{j-1}m_i,c)\in D_j$ for all $r'+\sum_{i=0}^{j-1}m_i<\tilde{r}+\sum_{i=0}^{j-1}m_i< r+\sum_{i=0}^{j-1}m_i$, and
        \item  $(r+\sum_{i=0}^{j-1}m_i,c)\in D$ is rightmost in row $r+\sum_{i=0}^{j-1}m_i$ of $D$.
    \end{enumerate}
    Consequently, applying a Kohnert move at row $r+\sum_{i=0}^{j-1}m_i$ of $D$ results in the diagram $$D^*=D\ldownarrow^{(r+\sum_{i=0}^{j-1}m_i,c)}_{(r'+\sum_{i=0}^{j-1}m_i,c)}.$$ Evidently, $\psi(D^*)=(D_1,\hdots, D^*_j,\hdots,D_m)$. Therefore, $\mathcal{P}(\mathbb{D}(\alpha_1))\times\hdots\times \mathcal{P}(\mathbb{D}(\alpha_m))\subseteq \text{im}~\psi.$
    
    Now, to show that the above inclusion is in fact an equality, considering the definition of $\psi$, it suffices to show that $(\ast)$ for $1\le j\le m$, cells in rows $1+\sum_{i=0}^{j-1}m_i\le r\le \sum_{i=0}^{j}m_i$ in $\mathbb{D}(\mathbf{a})$ cannot be moved via Kohnert moves to rows $\tilde{r}<\sum_{i=0}^{j-1}m_i$ in $\mathbb{D}(\mathbf{a})$; that is, no sequence of Kohnert moves can move cells in rows corresponding to $\alpha_j$ in $\mathbb{D}(\mathbf{a})$, for $1\le j\le m$, into rows corresponding to $\alpha_i$ in $\mathbb{D}(\mathbf{a})$ for $1\le i<j$. Since $\min(\alpha_{j-1})\ge\max(\alpha_j)$ for $1<j\le m$, it follows that $\min(\alpha_{i})\ge \max(\alpha_{j})$ for $1\le i<j\le m$. Thus, $\{(r,c)~|~1\le c\le \max(\alpha_{j})\text{ and }1\le r\le \sum_{i=0}^{j-1}m_i\}\subset\mathbb{D}(\mathbf{a})$. Consequently, it follows that for all $D\in \mathcal{P}(\mathbb{D}(\mathbf{a}))$, there are no empty positions in columns 1 through $\max(\alpha_{j})$ below row $1+\sum_{i=0}^{j-1}m_i$. Therefore, since the cells in rows $1+\sum_{i=0}^{j-1}m_i\le r\le \sum_{i=0}^{j}m_i$ of $\mathbb{D}(\mathbf{a})$ only occupy columns $1\le c\le \max(\alpha_{j})$, property $(\ast)$ holds. Hence, $$\mathcal{P}(\mathbb{D}(\alpha_1))\times\hdots\times \mathcal{P}(\mathbb{D}(\alpha_m))=\text{im}~\psi.$$ As $\psi$ is clearly one-to-one, it follows that $\psi:\mathcal{P}(\mathbb{D}(\mathbf{a}))\to\mathcal{P}(\mathbb{D}(\alpha_1))\times\hdots\times \mathcal{P}(\mathbb{D}(\alpha_m))$ is a bijection.

    It remains to show that $\psi$ and $\psi^{-1}$ are order preserving. Since the proofs are similar, we consider only the proof for $\psi$. For $\psi$, take $D_1,D_2\in \mathcal{P}(\mathbb{D}(\mathbf{a}))$ such that $D_2\prec D_1$. Then there exists a sequence of rows $R=\{r_i\}_{i=1}^n$ such that if $T_0=D_1$ and, for $1\leq i\leq n,$ $T_i$ is formed from $T_{i-1}$ by applying a single Kohnert move at row $r_i$, then $T_{i-1}\neq T_i$ for $1\le i\le n$ and $T_n=D_1$. Let $R_j$ denote the (possibly empty) subsequence of $R$ consisting of all $r_i\in R$ such that $1+\sum_{i=0}^{j-1}m_i\le r_i\le\sum_{i=1}^{j}m_i$. For $1\le j\le m$, if $R_j$ is nonempty, then assume $R_j=\{r_i^j\}_{i=1}^{n_j}$. Let $\psi(D_i)=(D^i_1,\hdots,D^i_m)$ for $i=1,2$. By construction, if $T_{0,j}=D_1^j$ for $1\le j\le m$ and $T_{i,j}$ is the diagram formed from $T_{i-1,j}$ by applying a Kohnert move at row $r_i^j-\sum_{k=0}^{j-1}m_k$ for $1\le i\le n_j$, then $T_{n_j,j}=D^2_j$. Thus, $D^2_j\preceq D^1_j$ for all $1\le j\le m$, and $D^2_k\prec D^1_k$ for at least one $1\le k\le m$; that is, $$\psi(D_2)=(D^2_1,\hdots,D^2_m)\prec (D^1_1,\hdots,D^1_m)=\psi(D_1)$$ in $\mathcal{P}(\mathbb{D}(\alpha_1))\times\hdots\times \mathcal{P}(\mathbb{D}(\alpha_m))$ and $\psi$ is order preserving.
\end{proof}

\begin{prop}\label{prop:purecomppieces}
    If $\mathbf{a}=(a_1,\hdots,a_n)$ is a basic pure composition, then $\mathcal{P}(\mathbb{D}(\mathbf{a}))$ is EL-shellable.
\end{prop}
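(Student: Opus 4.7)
The plan is to handle each of the four types of basic pure composition listed in Lemma~\ref{lem:puredecomp} separately, and in each case to exhibit an explicit hook diagram $H$ together with diagrams $D_{\text{top}},D_{\text{bot}}\in\mathcal{P}(H)$ such that $\mathcal{P}(\mathbb{D}(\mathbf{a}))\cong [D_{\text{bot}},D_{\text{top}}]$ as posets. Once such an isomorphism is in hand, Theorem~\ref{thm:HrrCshell} gives EL-shellability of $\mathcal{P}(H)$, and Theorem~\ref{thm:ELinterval} transfers EL-shellability to the interval, hence to $\mathcal{P}(\mathbb{D}(\mathbf{a}))$.

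For type (i), where $\mathbf{a}=(a_1,\ldots,a_n)$ is weakly decreasing, I would first observe that no Kohnert move is applicable to $\mathbb{D}(\mathbf{a})$: for any row $i$ with $a_i>0$, the rightmost cell $(i,a_i)$ sits above $(i-1,a_i)$, which is occupied since $a_{i-1}\ge a_i$, and this argument cascades down all of column $a_i$. Hence $\mathcal{P}(\mathbb{D}(\mathbf{a}))$ is a singleton, trivially isomorphic to a degenerate interval $[D,D]$ in any hook Kohnert poset. For type (ii), where every entry lies in $\{p,p+1\}$ with $a_1=p$, the same column-locking argument shows that every cell in columns $1,\ldots,p$ is fixed under all Kohnert moves, so the dynamics of $\mathbb{D}(\mathbf{a})$ are governed entirely by the cells in column $p+1$. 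This yields a natural order isomorphism between $\mathcal{P}(\mathbb{D}(\mathbf{a}))$ and the Kohnert poset of the single-column diagram $D^{\star}=\{(i,1):a_i=p+1\}$, which satisfies the criteria of Proposition~\ref{prop:formhook} and is therefore itself a hook diagram; its entire Kohnert poset plays the role of the interval.

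For types (iii) and (iv), I plan to take a hook of the form $H=H(n,n+1;C)$, where the column set $C$ is chosen to capture the ``cascade'' pattern of the movable cells of $\mathbb{D}(\mathbf{a})$; concretely, $C=\{1,\ldots,a_n-a_{n-1}\}$ in the pure type-(iii) case, with an analogous choice that accounts for the extra $\{p,p+1\}$-style prefix in type (iv). The top endpoint $D_{\text{top}}\in\mathcal{P}(H)$ is obtained from $H$ by performing a fixed initial sequence of Kohnert moves that cascades the vertical leg of $H$ into its final position mirroring the fixed-column block of $\mathbb{D}(\mathbf{a})$, while $D_{\text{bot}}$ is the unique minimum of $\mathcal{P}(H)$ beneath $D_{\text{top}}$. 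The isomorphism $\mathcal{P}(\mathbb{D}(\mathbf{a}))\to[D_{\text{bot}},D_{\text{top}}]$ is then built by matching each Kohnert-movable cell in $\mathbb{D}(\mathbf{a})$ with its counterpart in $H$ and tracking Kohnert moves step-by-step; preservation of covering relations in both directions is then verified using Corollary~\ref{cor:cover} together with Lemma~\ref{lem:strucprophelp}.

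The main obstacle will be type (iv), whose compositions combine a type-(ii)-style prefix with a weakly decreasing type-(iii)-style tail capped by a large final entry. In this case, choosing the hook and its interval endpoints requires simultaneously accommodating both cascade patterns, and checking preservation of covering relations demands a careful case analysis of which cells are movable at each stage; the criterion of Corollary~\ref{cor:cover} plays the central role in distinguishing genuine covers from merely multi-step relations, and Lemma~\ref{lem:strucprophelp} is used repeatedly to confirm that the candidate interval contains no extraneous elements.
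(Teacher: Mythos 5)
Your overall strategy --- realize $\mathcal{P}(\mathbb{D}(\mathbf{a}))$ as an interval in the Kohnert poset of a hook diagram and then invoke Theorems~\ref{thm:HrrCshell} and~\ref{thm:ELinterval} --- is exactly the paper's strategy, and your treatment of types (i) and (ii) is correct (type (i) gives a singleton poset; for type (ii) the columns $1,\dots,p$ are frozen and the dynamics reduce to the single-column diagram on the rows with $a_i=p+1$, which is itself a hook diagram by Proposition~\ref{prop:formhook}). The gap is in types (iii) and (iv), where the one concrete choice you make does not work and the actual isomorphism is never constructed. Take $\mathbf{a}=(2,2,5)$, a type (iii) composition: the movable cells are $(3,3),(3,4),(3,5)$, each of which can descend exactly two rows. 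Your hook $H(n,n+1;\{1,\dots,a_n-a_{n-1}\})=H(3,4;\{1,2,3\})$ has four cells, its arm cells sit in row $4$ above three empty rows, and if $D_{\mathrm{bot}}$ is the global minimum of $\mathcal{P}(H)$ then $[D_{\mathrm{bot}},D_{\mathrm{top}}]$ is the entire principal down-set of $D_{\mathrm{top}}$, which contains configurations in which cells have descended farther than anything in $\mathcal{P}(\mathbb{D}(2,2,5))$ permits. More generally, for type (iii) the drop depth of the cell $(n,c)$ is governed by the profile of the weakly decreasing prefix $a_1\ge\cdots\ge a_{n-1}$ and varies with $c$ (compare $\mathbf{a}=(3,1,5)$, where the depths are $1,1,2,2$); a two-row hook with a uniform column set cannot record this, and no "fixed initial cascade of the vertical leg" repairs it because the arm cells are left untouched by that cascade.

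The missing idea is the following uniform mechanism, which is what the paper does. Let $R$ be the set of cells $(r,c)\in\mathbb{D}(\mathbf{a})$ with $r<n$ whose entire column below them is filled, i.e.\ $(\tilde r,c)\in\mathbb{D}(\mathbf{a})$ for all $1\le\tilde r\le r$. These cells are bottom- and left-justified, hence lie in every $D\in\mathcal{P}(\mathbb{D}(\mathbf{a}))$, and the map $Hk(D)=D\setminus R$ sends $\mathbb{D}(\mathbf{a})$ to a genuine hook diagram $H$ (for type (iii) this is just the single row $H(n,n;\{1,\dots,a_n\})$; for types (ii) and (iv) it is row $n$ together with the cells beneath it in column $a_n$). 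One then checks that $Hk$ and its inverse $\widehat H\mapsto\widehat H\cup R$ both carry covering relations to relations, and that the image of $Hk$ is precisely the interval $[Hk(\mathbb{D}(\mathbf{a}')),H]$, where $\mathbf{a}'$ is $\mathbf{a}$ sorted into weakly decreasing order and $\mathbb{D}(\mathbf{a}')$ is the unique minimum of $\mathcal{P}(\mathbb{D}(\mathbf{a}))$. In particular, the correct bottom endpoint of the interval is $Hk$ of the minimum of $\mathcal{P}(\mathbb{D}(\mathbf{a}))$, not the global minimum of $\mathcal{P}(H)$; this is exactly how the varying drop depths are encoded. Without this (or an equivalent construction), your proposal for types (iii) and (iv) remains a statement of intent rather than a proof.
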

\begin{proof}
    Recall that if $\mathbf{a}=(a_1,\hdots,a_n)$ is a basic pure composition, then $\mathbf{a}$ is of one of the following forms:
    \begin{enumerate}
        \item[\textup{(i)}] $a_1\ge \hdots\ge a_n$,
        \item[\textup{(ii)}] there exists $p\in\mathbb{N}$ such that $a_1=p$ and $\{a_1,\hdots,a_n\}=\{p,p+1\}$,
        \item[\textup{(iii)}] $a_1\ge\hdots\ge a_{n-1}<a_n-1$, or
        \item[\textup{(iv)}] there exists $2<i^*\in\mathbb{N}$ and $p\in\mathbb{N}$ such that $a_1=p$, $\{a_1,\hdots,a_{i^*-1}\}=\{p,p+1\}$, $p>a_{i^*}\ge \hdots\ge a_{n-1}$, and $a_{n}=p+1$.
    \end{enumerate}
    Note that if $\mathbf{a}$ is of type (i) then, there are no empty positions below cells. Consequently, $\mathcal{P}(\mathbb{D}(\mathbf{a}))$ is a poset consisting of a single element which is trivially EL-shellable. To establish the result for the remaining cases, we show that the associated Kohnert poset $\mathcal{P}(\mathbb{D}(\mathbf{a}))$ is isomorphic to an interval in the Kohnert poset of a hook diagram. 
    
    Let $R$ denote the collection of cells $(r,c)\in\mathbb{D}(\mathbf{a})$ for $1\le r<n$ such that $(\tilde{r},c)\in\mathbb{D}(\mathbf{a})$ for $1\le \tilde{r}\le r$. When $\mathbf{a}$ is of type (ii) or (iv) with $\max(\mathbf{a})=p+1$ we have 
    \begin{equation}\label{eq:1}
        R=\{(r,c)~|~a_r=p+1,~1\le r<n,~1\le c\le p\}\cup \{(r,c)~|~a_r\le p,1\le c\le a_r\},
    \end{equation}
    while when $\mathbf{a}$ is of type (iii) we have that 
    \begin{equation}\label{eq:2}
        R=\mathbb{D}(a_1,\hdots,a_{n-1}).
    \end{equation}
    In Figure~\ref{fig:Rboxes} below we illustrate $R$ for the key diagrams of the basic pure compositions $(2,3,3,2,3)$, $(4,3,2,2,4)$, and $(3,4,3,4,3,2,1,4)$ of types $(ii)$, $(iii)$, and $(iv)$, respectively, where the cells of $R$ are marked with an ``$R$".
    \begin{figure}[H]
        \centering
        $$\scalebox{0.5}{\begin{tikzpicture}
            \draw (0,5.5)--(0,0)--(3.5,0);
            \node at (0.5,0.5) {$R$};
            \node at (1.5,0.5) {$R$};
            \draw (0,1)--(2,1)--(2,0);
            \draw (1,0)--(1,5);
            \node at (0.5,1.5) {$R$};
            \node at (1.5,1.5) {$R$};
            \node at (2.5,1.5) {$\bigtimes$};
            \draw (0,2)--(3,2)--(3,1)--(2,1);
            \draw (2,1)--(2,5);
            \node at (0.5,2.5) {$R$};
            \node at (1.5,2.5) {$R$};
            \node at (2.5,2.5) {$\bigtimes$};
            \draw (0,3)--(3,3)--(3,2);
            \node at (0.5,3.5) {$R$};
            \node at (1.5,3.5) {$R$};
            \draw (0,4)--(2,4);
            \node at (0.5,4.5) {$\bigtimes$};
            \node at (1.5,4.5) {$\bigtimes$};
            \node at (2.5,4.5) {$\bigtimes$};
            \draw (0,5)--(3,5)--(3,4)--(2,4);
            \node at (2,-1) {\Large $\mathbb{D}(2,3,3,2,3)$};
        \end{tikzpicture}}\quad\quad\quad\quad \scalebox{0.5}{\begin{tikzpicture}
            \draw (0,5.5)--(0,0)--(4.5,0);
            \node at (0.5,0.5) {$R$};
            \node at (1.5,0.5) {$R$};
            \node at (2.5,0.5) {$R$};
            \node at (3.5,0.5) {$R$};
            \draw (0,1)--(4,1)--(4,0);
            \draw (1,0)--(1,5);
            \draw (2,0)--(2,5);
            \draw (3,0)--(3,1);
            \node at (0.5,1.5) {$R$};
            \node at (1.5,1.5) {$R$};
            \node at (2.5,1.5) {$R$};
            \draw (0,2)--(3,2)--(3,1)--(2,1);
            \node at (0.5,2.5) {$R$};
            \node at (1.5,2.5) {$R$};
            \draw (0,3)--(2,3);
            \node at (0.5,3.5) {$R$};
            \node at (1.5,3.5) {$R$};
            \draw (0,4)--(2,4);
            \node at (0.5,4.5) {$\bigtimes$};
            \node at (1.5,4.5) {$\bigtimes$};
            \node at (2.5,4.5) {$\bigtimes$};
            \node at (3.5,4.5) {$\bigtimes$};
            \draw (0,5)--(4,5)--(4,4)--(2,4);
            \draw (3,4)--(3,5);
            \node at (2.5,-1) {\Large $\mathbb{D}(4,3,2,2,4)$};
        \end{tikzpicture}}\quad\quad\quad\quad \scalebox{0.5}{\begin{tikzpicture}
            \draw (0,7.5)--(0,0)--(4.5,0);
            \node at (0.5,0.5) {$R$};
            \node at (1.5,0.5) {$R$};
            \node at (2.5,0.5) {$R$};
            \draw (0,1)--(3,1)--(3,0);
            \draw (1,0)--(1,7);
            \draw (2,0)--(2,5);
            \draw (3,0)--(3,5);
            \node at (0.5,1.5) {$R$};
            \node at (1.5,1.5) {$R$};
            \node at (2.5,1.5) {$R$};
            \node at (3.5,1.5) {$\bigtimes$};
            \draw (0,2)--(4,2)--(4,1)--(3,1);
            \node at (0.5,2.5) {$R$};
            \node at (1.5,2.5) {$R$};
            \node at (2.5,2.5) {$R$};
            \draw (0,3)--(3,3)--(3,2);
            \node at (0.5,3.5) {$R$};
            \node at (1.5,3.5) {$R$};
            \node at (2.5,3.5) {$R$};
            \node at (3.5,3.5) {$\bigtimes$};
            \draw (0,4)--(4,4)--(4,3)--(3,3);
            \node at (0.5,4.5) {$R$};
            \node at (1.5,4.5) {$R$};
            \node at (2.5,4.5) {$R$};
            \draw (0,5)--(3,5);
            \draw (0,6)--(1,6);
            \node at (0.5,5.5) {$R$};
            \node at (0.5,6.5) {$\bigtimes$};
            \node at (1.5,6.5) {$\bigtimes$};
            \node at (2.5,6.5) {$\bigtimes$};
            \node at (3.5,6.5) {$\bigtimes$};
            \draw (0,7)--(4,7)--(4,6)--(1,6);
            \draw (2,6)--(2,7);
            \draw (3,6)--(3,7);
            \node at (2.5,-1) {\Large $\mathbb{D}(3,4,3,4,3,1,4)$};
        \end{tikzpicture}}$$
        \caption{Cells of $R$}
        \label{fig:Rboxes}
    \end{figure}
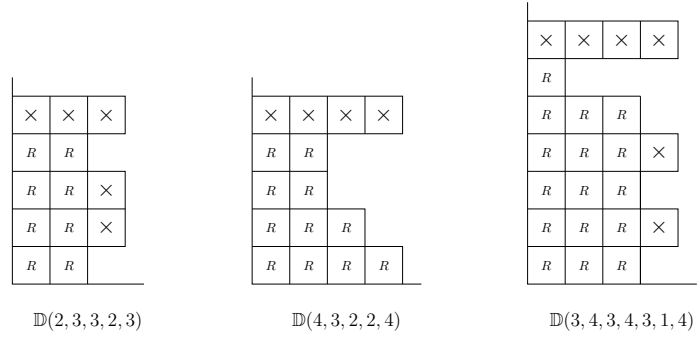
    
    \noindent
    Note that in all cases, considering (\ref{eq:1}) and (\ref{eq:2}), the cells of $R$ are both left and bottom justified. Consequently, $R\subset D$ for all $D\in \mathcal{P}(\mathbb{D}(\mathbf{a}))$. Now, for any $D\in \mathcal{P}(\mathbb{D}(\mathbf{a}))$, let $$Hk(D)=D\backslash R.$$ For $D=\mathbb{D}(\mathbf{a})$, we have the following options for the form of $Hk(D)$ depending on the type of $\mathbf{a}$:
    \begin{itemize}
        \item[] (ii) $Hk(D)$ consists of the $a_n$ cells in row $n$ of $D$ and the cells below in column $a_n$,
        \item[] (iii) $Hk(D)$ consists of the $a_n$ cells in row $n$ of $D$, or
        \item[] (iv) $Hk(D)$ consists of the $a_n$ cells in row $n$ of $D$ and the cells below in column $a_n$.
    \end{itemize}
    Note that, in each case, $Hk(\mathbb{D}(\mathbf{a}))$ is a hook diagram. We claim that $Hk$ defines a poset isomorphism between $\mathcal{P}(\mathbb{D}(\mathbf{a}))$ and an interval of $\mathcal{P}(H),$ where $H=Hk(\mathbb{D}(\mathbf{a}))$.

    First we show that $Hk$ is an order-preserving map from $\mathcal{P}(\mathbb{D}(\mathbf{a}))$ to $\mathcal{P}(H)$. Let $$\mathcal{P}_1=\{Hk(D)~|~D\in \mathcal{P}(\mathbb{D}(\mathbf{a}))\},$$ where we define a partial ordering on $\mathcal{P}_1$ by $D'\prec D$ if $D'$ can be formed from $D$ by applying some sequence of Kohnert moves. Since $Hk(\mathbb{D}(\mathbf{a}))=H\in\mathcal{P}_1$ and $D\prec\mathbb{D}(\mathbf{a})$ for all $D\in \mathcal{P}(\mathbb{D}(\mathbf{a}))$, if we can show that $Hk:\mathcal{P}(\mathbb{D}(\mathbf{a}))\to \mathcal{P}_1$ is order preserving, then it will follow that $\mathcal{P}_1\subseteq\mathcal{P}(H)$; that is, it will follow that $Hk$ is an order-preserving map from $\mathcal{P}(\mathbb{D}(\mathbf{a}))$ to $\mathcal{P}(H)$. Take $D_1,D_2\in \mathcal{P}(\mathbb{D}(\mathbf{a}))$ such that $D_2\precdot D_1$. Suppose that $D_2$ is formed from $D_1$ by applying a single Kohnert move at row $r$ and $D_2=D_1\ldownarrow^{(r,c)}_{(r',c)}$. We show that $Hk(D_2)$ can be formed from $Hk(D_1)$ by applying a single Kohnert move at row $r$ and $Hk(D_2)=Hk(D_1)\ldownarrow^{(r,c)}_{(r',c)}$, i.e., $Hk(D_2)\prec Hk(D_1)$. Since $D_2=D_1\ldownarrow^{(r,c)}_{(r',c)}$, it follows that $(r,c)\in D_1$ is rightmost in row $r$, $(\widehat{r},c)\in D_1$ for all $r'<\widehat{r}<r$, and $(r',c)\notin D_1$. 
    Thus, since the cells of $R\subset D_1$ are both bottom and left justified, we may conclude that 
    \begin{itemize}
        \item $(r,c),(r',c)\notin R$,
        \item $(r,c)\in D_1\backslash R= Hk(D_1)$ is rightmost in row $r$ of $Hk(D_1)$, and
        \item $(r',c)\notin D_1\backslash R=Hk(D_1)$.
    \end{itemize}
    Consequently, if applying a Kohnert at row $r$ of $Hk(D_1)$ does not result in $Hk(D_1)\ldownarrow^{(r,c)}_{(r',c)}$, then there must exist $\hat{r}$ such that $r'<\widehat{r}<r$, $(\widehat{r},c)\notin Hk(D_1)$, and $(\widehat{r},c)\in D_1$; however, this implies $(\widehat{r},c)\in R$, which is impossible since $(r',c)\notin R$ and the cells of $R$ are bottom justified. Hence, applying a Kohnert move at row $r$ of $Hk(D_1)$ results in $$Hk(D_1)\ldownarrow^{(r,c)}_{(r',c)}=(D_1\backslash R)\ldownarrow^{(r,c)}_{(r',c)}=\left(D_1\ldownarrow^{(r,c)}_{(r',c)}\right)\backslash R=D_2\backslash R=Hk(D_2),$$ where the second equality follows since $(r',c)\notin R$. Now, since $Hk(D_2)$ can be formed from $Hk(D_1)$ by applying a Kohnert move at row $r$, it follows that $Hk(D_2)\prec Hk(D_1)$. Therefore, if $D_1,D_2\in \mathcal{P}(\mathbb{D}(\mathbf{a}))$ satisfy $D_2\precdot D_1$, then $Hk(D_2)\prec H_k(D_1)$; that is, the map $Hk:\mathcal{P}(\mathbb{D}(\mathbf{a}))\to \mathcal{P}_1$ is order preserving. As noted above, it follows that $Hk$ is an order-preserving map from $\mathcal{P}(\mathbb{D}(\mathbf{a}))$ to $\mathcal{P}(H)$.

    Since $Hk$ is clearly one-to-one, in order to establish the claim, it only remains to show that $Hk$ maps $\mathcal{P}(\mathbb{D}(\mathbf{a}))$ onto an interval of $\mathcal{P}(H)$ and that the inverse of $Hk$ is order preserving. Let $\mathbf{a}'$ be the weak composition formed by sorting the entries of $\mathbf{a}$ into weakly decreasing order, and note that, by Corollary 6.2 of \textbf{\cite{KPoset1}}, $\mathbf{a}'$ is the unique minimal element of $\mathcal{P}(\mathbb{D}(\mathbf{a}))$. If $H'=Hk(\mathbb{D}(\mathbf{a}'))$, then we have that $Hk:\mathcal{P}(\mathbb{D}(\mathbf{a}))\to [H',H]$ since $Hk$ is order preserving. To show that $Hk$ maps $\mathcal{P}(\mathbb{D}(\mathbf{a}))$ onto $[H',H]$, we investigate the map $\phi$, which we define to send $D\in [H',H]$ to $D\cup R$. 

    Similar to the case of $Hk$, let $$\mathcal{P}_2=\{\phi(D)~|~D\in[H',H]\}$$ and define a partial ordering on $\mathcal{P}_2$ by $D'\prec D$ if $D'$ can be formed from $D$ by applying some sequence of Kohnert moves. We shall show that $\phi:[H',H]\to \mathcal{P}_2$ is order preserving. Since $\phi(H)=\mathbb{D}(\mathbf{a})$ and $\phi(H')=\mathbb{D}(\mathbf{a}')$, it will then follow that $$\mathbb{D}(\mathbf{a}')=\phi(H')\preceq \phi(\widehat{H})\preceq \phi(H)=\mathbb{D}(\mathbf{a})$$ for all $\widehat{H}\in [H',H]$, i.e., it will follow that $\mathcal{P}_2\subseteq \mathcal{P}(\mathbb{D}(\mathbf{a}))$ and $\phi$ is an order-preserving map from $[H',H]$ to $\mathcal{P}(\mathbb{D}(\mathbf{a}))$. Let $S=[\max(\mathbf{a})]^2\backslash R$. Since the cells of $R$ are both bottom and left justified, it follows that there exists $0<r^i_1\le r^i_2$ for $1\le i\le \max(\mathbf{a})$ such that $$S=\bigcup_{i=1}^{\max(\mathbf{a})}\{(\tilde{r},i)~|~r^i_1\le \tilde{r}\le r^i_2\}.$$ Moreover, since $H=Hk(\mathbb{D}(\mathbf{a}))=\mathbb{D}(\mathbf{a})\backslash R$ and $H'=Hk(\mathbb{D}(\mathbf{a}'))=\mathbb{D}(\mathbf{a}')\backslash R$, we have that $H\backslash S=\emptyset=H'\backslash S$. Consequently, applying Lemma~\ref{lem:strucprophelp}, it follows that $\widehat{H}\backslash S=\emptyset$ for all $\widehat{H}\in [H',H]$. In particular, 
    $$\widehat{H}\cap R=\emptyset\text{ for all }\widehat{H}\in [H',H].$$ Now, take $H_1,H_2\in [H',H]$ such that $H_2\precdot H_1$. Suppose that $H_2$ can be formed from $H_1$ by applying a single Kohnert move at row $r$ and $H_2=H_1\ldownarrow^{(r,c)}_{(r',c)}$. We show that $\phi(H_2)$ can be formed from $\phi(H_1)$ by applying a single Kohnert move at row $r$ and $\phi(H_2)=\phi(H_1)\ldownarrow^{(r,c)}_{(r',c)}$. Since $H_2=H_1\ldownarrow^{(r,c)}_{(r',c)}$, it follows that $(r,c)\in H_1$ is rightmost in row $r$, $(\widehat{r},c)\in H_1$ for all $r'<\widehat{r}<r$, and $(r',c)\notin H_1$. Thus, since the cells of $R$ are left justified and $H_1\cap R=\emptyset$, it follows that $(r,c)\notin R$ and $(r,c)\in H_1\cup R= \phi(H_1)$ is rightmost in row $r$ of $\phi(H_1)$. Moreover, since $H_2\cap R=\emptyset$, it follows that $(r',c)\notin R$ and, as a result, $(r',c)\notin H_1\cup R=\phi(H_1)$. Consequently, if applying a single Kohnert move at row $r$ of $\phi(H_1)$ does not result in $\phi(H_1)\ldownarrow^{(r,c)}_{(r',c)}$, then there must exist $r'<\widehat{r}<r$ such that $(\widehat{r},c)\notin \phi(H_1)$; but this is impossible since $H_1\subset H_1\cup R=\phi(H_1)$. Hence, applying a single Kohnert move at row $r$ of $\phi(H_1)$ results in $$\phi(H_1)\ldownarrow^{(r,c)}_{(r',c)}=(H_1\cup R)\ldownarrow^{(r,c)}_{(r',c)}=\left(H_1\ldownarrow^{(r,c)}_{(r',c)}\right)\cup R=H_2\cup R=\phi(H_2).$$ Since $\phi(H_2)$ can be formed from $\phi(H_1)$ by applying a Kohnert move at row $r$, it follows that $\phi(H_2)\prec \phi(H_1)$. Therefore, if $H_1,H_2\in \mathcal{P}(H)$ satisfy $H_2\precdot H_1$, then $\phi(H_2)\prec \phi(H_1)$; that is, $\phi:[H',H]\to\mathcal{P}_2$ is order preserving. As noted above, it follows that $\phi$ is an order-preserving map from $[H',H]$ to $\mathcal{P}(\mathbb{D}(\mathbf{a}))$.

    Now, our work above shows that $Hk:\mathcal{P}(\mathbb{D}(\mathbf{a}))\to [H',H]$ and $\phi:[H',H]\to \mathcal{P}(\mathbb{D}(\mathbf{a}))$ are both order preserving. Since $R\subset D$ for all $D\in \mathcal{P}(\mathbb{D}(\mathbf{a}))$ and $\widehat{H}\cap R=\emptyset$ for all $\widehat{H}\in [H',H]$, it follows that $\phi(Hk(D))=D$ for all $D\in \mathcal{P}(\mathbb{D}(\mathbf{a}))$ and $Hk(\phi(\widehat{H}))=\widehat{H}$ for all $\widehat{H}\in [H',H]$. Thus, $\phi=Hk^{-1}$ so that $Hk$ defines an isomorphism between $\mathcal{P}(\mathbb{D}(\mathbf{a}))$ and $[H',H]$. Combining Theorem~\ref{thm:ELinterval} and Theorem~\ref{thm:HrrCshell}, the result follows.
\end{proof}

Now, to prove Theorem~\ref{thm:kshell} using the results above, we require the following result of \textbf{\cite{Bjorner}}.


\begin{theorem}[Bj\"orner \textbf{\cite{Bjorner}}]\label{thm:pshell}
    The product of graded posets is EL-shellable if and only if each of the posets is EL-shellable.
\end{theorem}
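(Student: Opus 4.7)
The plan is to prove both directions directly. For the forward direction, I would observe that if $\mathcal{P}$ and $\mathcal{Q}$ are graded with $\hat{0}_P,\hat{1}_P$ and $\hat{0}_Q,\hat{1}_Q$ as their bottom and top elements, then $\mathcal{P}\times\mathcal{Q}$ is bounded with minimum $(\hat{0}_P,\hat{0}_Q)$ and maximum $(\hat{1}_P,\hat{1}_Q)$. The map $x\mapsto (x,\hat{0}_Q)$ embeds $\mathcal{P}$ as the interval $[(\hat{0}_P,\hat{0}_Q),(\hat{1}_P,\hat{0}_Q)]$ inside $\mathcal{P}\times\mathcal{Q}$, and similarly $\mathcal{Q}$ appears as an interval. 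By Theorem~\ref{thm:ELinterval}, any interval of an EL-shellable poset is EL-shellable, so if $\mathcal{P}\times\mathcal{Q}$ is EL-shellable then so are $\mathcal{P}$ and $\mathcal{Q}$.

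For the backward direction, given EL-labelings $\lambda_P:\mathcal{E}(\mathcal{P})\to\Lambda_P$ and $\lambda_Q:\mathcal{E}(\mathcal{Q})\to\Lambda_Q$, I would take $\Lambda=\Lambda_P\sqcup\Lambda_Q$ ordered so that every element of $\Lambda_P$ is strictly less than every element of $\Lambda_Q$ (internally each keeps its original order). Every covering relation in $\mathcal{P}\times\mathcal{Q}$ is either of the form $(x,y)\precdot(x',y)$ with $x\precdot_P x'$, or $(x,y)\precdot(x,y')$ with $y\precdot_Q y'$; define $\lambda$ to return $\lambda_P(x,x')$ in the first case and $\lambda_Q(y,y')$ in the second. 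I then need to check that this $\lambda$ is an EL-labeling on every interval.

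An arbitrary interval in $\mathcal{P}\times\mathcal{Q}$ has the form $[(x_0,y_0),(x_1,y_1)]\cong [x_0,x_1]_{\mathcal{P}}\times [y_0,y_1]_{\mathcal{Q}}$. Every saturated chain in such an interval is a shuffle of a saturated $\mathcal{P}$-chain from $x_0$ to $x_1$ with a saturated $\mathcal{Q}$-chain from $y_0$ to $y_1$; the corresponding label sequence is obtained by interleaving the label sequences of the two constituent chains according to the shuffle. Because every $\Lambda_P$-label is smaller than every $\Lambda_Q$-label, a rising chain in the product interval must consist of all $\mathcal{P}$-steps followed by all $\mathcal{Q}$-steps, with each of the two constituent chains itself rising. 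Applying the EL-property of $\lambda_P$ on $[x_0,x_1]_{\mathcal{P}}$ and of $\lambda_Q$ on $[y_0,y_1]_{\mathcal{Q}}$, there is a unique such pair, and hence a unique rising chain in the product interval. For lexicographic minimality, the ordering of $\Lambda$ forces the lex-smallest label sequence to list all $\Lambda_P$-labels before any $\Lambda_Q$-label, i.e., to be the ``$\mathcal{P}$-then-$\mathcal{Q}$'' shuffle; among these it is lex-smallest iff the two component label sequences are themselves lex-smallest among saturated chains in their respective factor intervals, which holds exactly for the unique rising chain produced above.

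The main obstacle is bookkeeping: making the shuffle description precise and verifying that comparing label sequences on the product reduces cleanly to the factorwise comparison. The graded hypothesis enters to ensure that all saturated chains between $(x_0,y_0)$ and $(x_1,y_1)$ have the same length (namely the sum of the ranks in the two factor intervals), so lexicographic comparison is performed between vectors of equal length and there are no degenerate short chains to break the argument.
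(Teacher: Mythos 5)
This statement is quoted in the paper as a known theorem of Bj\"orner, with a citation in place of a proof, so there is no in-paper argument to compare against; what you have written is essentially the standard proof from the literature, and it is correct. Your forward direction (realizing each factor as the interval $[(\hat{0}_P,\hat{0}_Q),(\hat{1}_P,\hat{0}_Q)]$ and invoking Theorem~\ref{thm:ELinterval}) is fine, and your product labeling with $\Lambda_P$ stacked below $\Lambda_Q$ is the classical construction: covers in the product move in exactly one coordinate, maximal chains of $[(x_0,y_0),(x_1,y_1)]$ are shuffles of maximal chains of the factor intervals, a rising chain must do all $P$-steps (rising) then all $Q$-steps (rising), and lex-minimality reduces to the factorwise lex-minimality via a first-position-of-disagreement argument. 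Two small points worth tightening if you write this up. First, the theorem as stated concerns a product of finitely many graded posets, so you should note that the two-factor case iterates by induction, using that a product of graded posets is again graded (rank function the sum of the factor ranks) so the inductive hypothesis applies. Second, the paper's Definition~\ref{def:EL} takes label vectors in $\mathbb{Z}^n$, so you should either remark that EL-shellability is insensitive to replacing the label poset by an order-isomorphic one, or simply embed $\Lambda_P\sqcup\Lambda_Q$ into $\mathbb{Z}$ by shifting $\Lambda_Q$ above $\Lambda_P$. Neither issue is a gap in the mathematics.
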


We are now in a position to complete the proof of Theorem~\ref{thm:kshell}.

\begin{proof}[Proof of Theorem~\ref{thm:kshell}]
Considering Theorem~\ref{thm:kgrade} and Proposition~\ref{prop:keynecrank}, it remains to show that if $\mathbf{a}$ is pure, then $\mathcal{P}(\mathbb{D}(\mathbf{a}))$ is EL-shellable. Let $(\alpha_1,\hdots,\alpha_m)$ be a pure decomposition of $\mathbf{a}$. Combining Theorems~\ref{thm:keybound} and~\ref{thm:kgrade} along with Proposition~\ref{prop:purecomppieces}, it follows that $\mathcal{P}(\mathbb{D}(\alpha_j))$ is graded and EL-shellable for $1\le j\le m$. Thus, since $\mathcal{P}(\mathbb{D}(\mathbf{a}))\cong\mathcal{P}(\mathbb{D}(\alpha_1))\times\hdots\times \mathcal{P}(\mathbb{D}(\alpha_m))$ by Proposition~\ref{prop:kdp}, the result follows from Theorem~\ref{thm:pshell}.
\end{proof}

In the theorem below, we consider the polynomial consequences of a key diagram having a pure and EL-shellable Kohnert poset.

\begin{theorem}\label{thm:keymf}
    Let $\mathbf{a}$ be a weak composition. If $\mathcal{P}(\mathbb{D}(\mathbf{a}))$ is pure and EL-shellable, then $\mathfrak{K}_{\mathbb{D}(\mathbf{a})}$ is multiplicity free.
\end{theorem}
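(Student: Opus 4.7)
The plan is to reduce multiplicity-freeness of $\mathfrak{K}_{\mathbb{D}(\mathbf{a})}$ to the basic pure composition case via the direct-product decomposition of Proposition~\ref{prop:kdp}, then exploit the interval isomorphism with a hook Kohnert poset from Proposition~\ref{prop:purecomppieces} so that the result follows from multiplicity-freeness of the corresponding hook Kohnert polynomial.

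By Theorem~\ref{thm:kshell} the hypothesis forces $\mathbf{a}$ to be pure, so I would fix a pure decomposition $(\alpha_1, \hdots, \alpha_m)$ and set $s_0 = 0$, $s_j = m_1 + \cdots + m_j$, where $m_j$ is the length of $\alpha_j$. The isomorphism $\psi$ of Proposition~\ref{prop:kdp} sends $D \in \mathcal{P}(\mathbb{D}(\mathbf{a}))$ to $(D_1, \hdots, D_m)$, where $D_j$ is obtained by shifting the cells of $D$ in rows $s_{j-1}+1, \hdots, s_j$ down by $s_{j-1}$. Tracking weights through this shift yields
$$\mathfrak{K}_{\mathbb{D}(\mathbf{a})} \;=\; \prod_{j=1}^{m} \mathfrak{K}_{\mathbb{D}(\alpha_j)}\bigl(x_{s_{j-1}+1}, \hdots, x_{s_j}\bigr).$$
Because the $m$ factors involve pairwise disjoint variable sets, it suffices to show $\mathfrak{K}_{\mathbb{D}(\alpha)}$ is multiplicity-free for each basic pure composition $\alpha$. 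The type (i) case is trivial since $\mathbb{D}(\alpha)$ admits no nontrivial Kohnert moves, so $\mathfrak{K}_{\mathbb{D}(\alpha)}$ is a single monomial. For types (ii)--(iv), the map $Hk\colon D \mapsto D \setminus R$ of Proposition~\ref{prop:purecomppieces} identifies $\mathcal{P}(\mathbb{D}(\alpha))$ with an interval $[H', H] \subseteq \mathcal{P}(H)$ for a hook diagram $H$, with $R$ a fixed subdiagram present in every element of $\mathcal{P}(\mathbb{D}(\alpha))$. Since $wt(D) = wt(R) \cdot wt(Hk(D))$, this gives
$$\mathfrak{K}_{\mathbb{D}(\alpha)} \;=\; wt(R) \cdot \sum_{T \in [H', H]} wt(T),$$
expressing $\mathfrak{K}_{\mathbb{D}(\alpha)}$ as a single monomial times a sub-sum of $\mathfrak{K}_{H}$, and multiplicity-freeness of $\mathfrak{K}_H$ passes directly to $\mathfrak{K}_{\mathbb{D}(\alpha)}$.

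The main obstacle is therefore to show $\mathfrak{K}_H$ is multiplicity-free for every hook diagram $H$ produced by $Hk(\mathbb{D}(\alpha))$. When $H$ either has at most one cell per column or has empty first two rows, this is immediate from Theorem~\ref{thm:hookmf} combined with Theorem~\ref{thm:HrrCshell}; however, basic pure compositions of types (ii) and (iv) can produce hook diagrams falling outside both families. To handle these, I would observe that the explicit monomial-to-diagram bijection constructed in the proof of Theorem~\ref{thm:hookmf}(Case 2) depends only on properties (i)--(iv) of Proposition~\ref{prop:formhook} and on the identification of the top row and rightmost column of $H$; since these ingredients are available for every hook diagram, the bijection extends verbatim to show $\mathfrak{K}_H$ is multiplicity-free in full generality. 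Verifying that this extension goes through with no reliance on the ``first two rows empty'' hypothesis is the principal technical point and completes the argument.
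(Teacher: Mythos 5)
Your proof is correct, but it takes a genuinely different route from the paper's. The paper's argument is a short reduction to an external result: it quotes the pattern-avoidance characterization of multiplicity-free key polynomials from \textbf{\cite{key}} and simply checks that pure compositions avoid all five forbidden patterns, so the conclusion follows from Theorem~\ref{thm:kshell}. You instead give a structural argument internal to the paper: purity (via Theorem~\ref{thm:kshell}) plus Proposition~\ref{prop:kdp} factors $\mathfrak{K}_{\mathbb{D}(\mathbf{a})}$ into Kohnert polynomials of the basic pure pieces in pairwise disjoint variable sets, and Proposition~\ref{prop:purecomppieces} writes each factor as the fixed monomial $wt(R)$ times a sub-sum of a hook Kohnert polynomial, so everything reduces to multiplicity-freeness of $\mathfrak{K}_H$ for hook diagrams $H$. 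You correctly flag the one point not literally available in the paper: Theorem~\ref{thm:hookmf} covers only diagrams with at most one cell per column or empty first two rows, and the hooks $Hk(\mathbb{D}(\alpha))$ arising from types (ii) and (iv) need satisfy neither hypothesis. Your claim that the Case 2 reconstruction extends is right -- it uses only Proposition~\ref{prop:formhook} (i)--(iv): the bottom $n_2-1$ cells of the rightmost column each sit alone in the lowest nonempty rows, and the remaining row data determine the other cells via the weakly decreasing condition, with no appeal to the rows being empty -- but in a final write-up you should state and prove this general hook statement as a separate lemma rather than asserting it extends ``verbatim,'' since the paper gives you nothing to cite for it. What your approach buys is independence from the criterion of \textbf{\cite{key}} and a structural explanation of why purity forces multiplicity-freeness; what the paper's approach buys is brevity.
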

\begin{proof}
    It is shown in \textbf{\cite{key}} that, given a weak composition $\mathbf{a}=(a_1,a_2,\hdots,a_n)$, the polynomial $\mathfrak{K}_{\mathbb{D}(\mathbf{a})}$ is multiplicity free if and only if there exists no three indices $1\le i_1<i_2<i_3\le n$ such that $a_{i_1}<a_{j_2}<a_{j_3}$
    and no four indices $1\le j_1<j_2<j_3<j_4\le n$ such that
    \begin{enumerate}
        \item $a_{j_1}=a_{j_2}<a_{j_3}-1<a_{j_4}-1$,
        \item $a_{j_1}=a_{j_2}<a_{j_4}<a_{j_3}$,
        \item $a_{j_2}<a_{j_1}<a_{j_4}<a_{j_3}$, and
        \item $a_{j_2}<a_{j_1}<a_{j_3}=a_{j_4}$.
    \end{enumerate}
    It is straightforward to verify that pure compositions avoid all five of the patterns above, so the result follows from Theorem~\ref{thm:kshell}.
\end{proof}

As previously noted, unlike in the case of the diagrams considered in Section~\ref{sec:hook}, a key diagram having a multiplicity-free Kohnert polynomial is not equivalent to the (EL-)shellability of the diagram's Kohnert poset. To see this, note that for $\mathbf{a}=(0,3,3)$ we have $$\mathfrak{K}_{\mathbb{D}(\mathbf{a})}=x_1^3x_2^3+x_1^3x_2^2x_3+x_1^2x_2^3x_3+x_1^3x_2x_3^2+x_1^2x_2^2x_3^2+x_1x_2^3x_3^2+x_1^3x_3^3+x_1^2x_2x_3^3+x_1x_2^2x_3^3+x_2^3x_3^3,$$ which is mulitplicity free. However, considering the Hasse diagram of $\mathcal{P}(\mathbb{D}(\mathbf{a}))$ illustrated in Figure~\ref{fig:keymf}, we see that $\mathcal{P}(\mathbb{D}(\mathbf{a}))$ contains an interval isomorphic to the poset $\mathcal{P}$ of Example~\ref{ex:hex}. Thus, $\mathcal{P}(\mathbb{D}(\mathbf{a}))$ is not shellable.

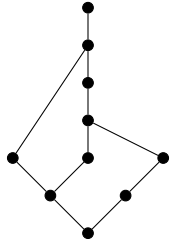
\begin{figure}[H]
    \centering
    $$\begin{tikzpicture}
	\node (1) at (0, 0) [circle, draw = black, fill = black, inner sep = 0.5mm]{};
	\node (2) at (-0.5, 0.5)[circle, draw = black, fill = black, inner sep = 0.5mm] {};
    \node (3) at (0.5, 0.5) [circle, draw = black, fill = black, inner sep = 0.5mm]{};
    \node (4) at (-1, 1) [circle, draw = black, fill = black, inner sep = 0.5mm]{};
    \node (5) at (0, 1) [circle, draw = black, fill = black, inner sep = 0.5mm]{};
    \node (6) at (1, 1) [circle, draw = black, fill = black, inner sep = 0.5mm]{};
    \node (7) at (0, 1.5) [circle, draw = black, fill = black, inner sep = 0.5mm]{};
    \node (8) at (0, 2) [circle, draw = black, fill = black, inner sep = 0.5mm]{};
    \node (9) at (0, 2.5) [circle, draw = black, fill = black, inner sep = 0.5mm]{};
    \node (10) at (0, 3) [circle, draw = black, fill = black, inner sep = 0.5mm]{};
    \draw (1)--(3)--(6)--(7)--(8)--(9)--(4)--(2)--(5)--(7);
    \draw (1)--(2);
    \draw (9)--(10);
\end{tikzpicture}$$
    \caption{Hasse diagram of $\mathcal{P}(\mathbb{D}(0,3,3))$}
    \label{fig:keymf}
\end{figure}

\section{Epilogue}\label{sec:ep}

In this article, our focus was a characterization of (EL-)shellable Kohnert posets. While we were able to establish some general results in Propositions~\ref{prop:strucasc} and~\ref{prop:strucblock}, we were not able to obtain a complete characterization. Instead, we determined characterizations for some particular families of diagrams, including those with at most one cell per column and those whose first two rows are empty. Restricting attention to pure, (EL-)shellable Kohnert posets, we were able to determine characterizations for those diagrams associated with key polynomials. With respect to a general characterization, we make the following conjectures.

\begin{conj}\label{conj:shell}
    There exists a finite number of families of subdiagrams $\mathcal{F}$ such that given any diagram $D$, $\mathcal{P}(D)$ is shellable if and only if there exists no $\tilde{D}\in \mathcal{P}(D)$ such that $\tilde{D}$ contains a subdiagram from $\mathcal{F}$. 
\end{conj}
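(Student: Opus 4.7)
The plan is to prove the conjecture in two steps: first assemble a candidate finite list $\mathcal{F}$ of forbidden configurations, then establish that avoidance of $\mathcal{F}$ across the entire poset is equivalent to shellability. The starting point is that Lemma~\ref{lem:nonshellp} describes a universal ``obstruction shape'' — two disjoint saturated chains of lengths $n_1,n_2 > 1$ sharing only their endpoints — and by Theorem~\ref{thm:shellinterval} any non-shellable Kohnert poset contains a non-shellable interval. Propositions~\ref{prop:strucasc} and~\ref{prop:strucblock} already exhibit two local subdiagram families that produce such intervals, so these will be the seed members of $\mathcal{F}$.

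The first step is a systematic classification. Suppose $[D_2,D_1]$ is a minimal non-shellable interval in $\mathcal{P}(D)$ for some $D$. By Lemma~\ref{lem:strucprophelp}, the symmetric difference $D_1 \triangle D_2$ is confined to a small set of columns, and Corollary~\ref{cor:cover} constrains which single Kohnert moves can be covers. The plan is to case-split on the number of columns and rows involved in $D_1 \triangle D_2$, and within each case identify the minimal sub-configurations of $D_1$ that force the Lemma~\ref{lem:nonshellp} shape. Because each such configuration is a local pattern — specified by a bounded number of cells and empty positions in a bounded region, as in Figures~\ref{fig:shelllem1} and~\ref{fig:strucblock} — one expects the list to be finite. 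This is essentially a combinatorial bookkeeping problem: record every way that two competing sequences of Kohnert moves can share endpoints while having no common intermediate cover.

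The second step is the converse: if no $\tilde D \in \mathcal{P}(D)$ contains any member of $\mathcal{F}$, then $\mathcal{P}(D)$ is shellable. Here the strategy is to mimic the pattern of Sections~\ref{sec:hook} and~\ref{sec:key}: attempt to decompose $\mathcal{P}(D)$ as a product $\mathcal{P}(D_1)\times\cdots\times\mathcal{P}(D_k)$ of posets associated with simpler diagrams, in the spirit of Proposition~\ref{prop:kdp}, and then reduce each factor either to a hook-diagram interval (invoking Proposition~\ref{prop:purecomppieces} and Theorem~\ref{thm:HrrCshell}) or to a trivially shellable piece. Theorem~\ref{thm:pshell} would then recombine the factors into a shelling of $\mathcal{P}(D)$. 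The absence of forbidden patterns should be precisely what guarantees that (i) cells in different ``blocks'' never interact under Kohnert moves and (ii) inside each block the configuration reduces to a case already handled.

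The main obstacle I anticipate is completing the list $\mathcal{F}$ rigorously and proving it is exhaustive. The forbidden patterns arising from Propositions~\ref{prop:strucasc} and~\ref{prop:strucblock} were sufficient for the restricted families studied in this paper, but they were never shown to be the \emph{only} local obstructions; diagrams with many cells in low rows could conceivably produce new Lemma~\ref{lem:nonshellp}-type intervals that are not captured by either figure. Proving finiteness will require an a priori bound on the size of a minimal forbidden configuration, likely extracted by bounding, in terms of $n_1+n_2$, the diameter of the region in which $D_1\triangle D_2$ lives. Once such a bound is secured, enumeration becomes finite and the remaining decomposition arguments should go through by the same Lemma~\ref{lem:strucprophelp}/Corollary~\ref{cor:cover} techniques used throughout the paper; the enumeration itself, however, is where I expect the real work to lie.
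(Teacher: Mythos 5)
The statement you are addressing is stated in the paper as Conjecture~\ref{conj:shell} and is left open; the authors offer no proof, only the observation that it parallels Conjecture 8.1 of \textbf{\cite{KPoset1}}. Your proposal is a program rather than a proof, and you are candid that the decisive step is missing, but it is worth pinpointing exactly where the plan breaks down. For necessity, your classification scheme presumes that every non-shellable Kohnert poset contains a non-shellable interval isomorphic to a poset of the shape in Lemma~\ref{lem:nonshellp} (two saturated chains of distinct lengths glued at their endpoints), and that such an interval is forced by a local pattern of bounded size. Neither claim is available. Theorem~\ref{thm:shellinterval} runs only in one direction: a non-shellable interval certifies non-shellability, but a non-shellable poset need not exhibit its failure through an interval of this particular two-chain form, nor through any interval at all of bounded ``diameter.'' Without an a priori bound on the size of a minimal obstruction --- which is precisely what the conjecture asserts and what you defer to ``combinatorial bookkeeping'' --- the finiteness of $\mathcal{F}$ is not established, and the enumeration cannot even begin.

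The sufficiency direction has a parallel gap. The product decomposition of Proposition~\ref{prop:kdp} is proved only for key diagrams of pure compositions, and it relies on the very rigid structure of pure decompositions (the blocks $\alpha_j$ satisfy $\min(\alpha_{j-1})\ge\max(\alpha_j)$, which is what prevents cells from migrating between blocks). There is no analogue for a general diagram merely known to avoid some list of local patterns, and the reduction of each hypothetical factor to a hook-diagram interval is likewise unavailable outside the families treated in Sections~\ref{sec:hook} and~\ref{sec:key}. Indeed the paper's own evidence cuts against an easy completion: for key diagrams the authors could only characterize shellability under the additional hypothesis of purity, and Conjecture~\ref{conj:key} for the unrestricted key case remains open. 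So both halves of your plan rest on unproved structural claims; what you have written is a reasonable research outline consistent with the paper's methods, but it does not constitute a proof of the conjecture.
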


\begin{conj}\label{conj:key}
    Let $\mathbf{a}=(a_1,\hdots,a_n)$ be a weak composition. Then $\mathcal{P}(\mathbb{D}(\mathbf{a}))$ is shellable if and only if there exist no $1\le i_1<i_2<i_3\le n$ for which
    \begin{itemize}
    \item $a_{i_1}<a_{i_2}<a_{i_3}$ or
    \item $a_{i_1}\le a_{i_3}-3\le a_{i_2}-3$,
    \end{itemize}
    and there exist no $1\le j_1<j_2<j_3<j_4\le n$ for which
    \begin{itemize}
    \item $a_{j_1}\le a_{j_2}<a_{j_3}-1\le a_{j_4}-1$,
    \item $a_{j_1}\le a_{j_2}<a_{j_4}< a_{j_3}$,
    \item $a_{j_2}< a_{j_1}<a_{j_4}< a_{j_3}$, or
    \item $a_{j_2}< a_{j_1}<a_{j_3}\le a_{j_4}$.
\end{itemize}
\end{conj}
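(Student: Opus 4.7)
The forward direction is essentially immediate: by Proposition~\ref{prop:keynecrank}, any composition $\mathbf{a}$ containing one of the six listed patterns produces a non-shellable Kohnert poset $\mathcal{P}(\mathbb{D}(\mathbf{a}))$. The patterns of Proposition~\ref{prop:keynecrank} coincide exactly with those prohibited by the conjecture, so the implication follows at once.

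For the backward direction the plan is to generalize the strategy behind Theorem~\ref{thm:kshell}. First I would classify the compositions satisfying the conjecture's avoidance conditions. Pure compositions form a proper subclass, so the novel cases are those violating purity while still avoiding all six conjecture patterns: compositions containing a ``132'' triple $a_{j_1}<a_{j_3}<a_{j_2}$ with $a_{j_3}-a_{j_1}\le 2$, or a plateau $a_{j_1}+1<a_{j_2}=a_{j_3}$ with $a_{j_2}-a_{j_1}\le 2$. Working through the interactions constrained by the four-term pattern conditions, I would introduce additional basic block types extending the four of Lemma~\ref{lem:puredecomp}, and prove a ``generalized pure decomposition'' $\mathbf{a}=(\alpha_1,\ldots,\alpha_m)$ into such blocks satisfying the separation condition $\min(\alpha_{j-1})\ge\max(\alpha_j)$.

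Next, I would extend Proposition~\ref{prop:kdp} to this setting. Its proof uses the separation $\min(\alpha_{j-1})\ge\max(\alpha_j)$ only to ensure that Kohnert moves cannot carry cells across block boundaries, not purity itself, so the argument should transfer verbatim to yield $\mathcal{P}(\mathbb{D}(\mathbf{a}))\cong\mathcal{P}(\mathbb{D}(\alpha_1))\times\cdots\times\mathcal{P}(\mathbb{D}(\alpha_m))$. The pure basic blocks remain shellable by Proposition~\ref{prop:purecomppieces}; for each new block type I would attempt to identify $\mathcal{P}(\mathbb{D}(\alpha_j))$ with a shellable structure, either as an interval in a suitably enlarged hook-like poset so that Theorem~\ref{thm:HrrCshell} can be reused, or directly by constructing a non-pure EL-labeling in the sense of Bj\"orner--Wachs and adapting the cell-decoration scheme from the proof of Theorem~\ref{thm:HrrCshell}.

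The main obstacle is that the new basic blocks are non-pure, so $\mathcal{P}(\mathbb{D}(\mathbf{a}))$ can fail to be graded and lies outside the scope of Theorem~\ref{thm:pshell}. Two hurdles appear simultaneously: establishing shellability for a single non-graded block, and preserving shellability under poset products with non-graded factors. The second point likely requires replacing Theorem~\ref{thm:pshell} by a product theorem for non-pure shellable posets, or sidestepping products altogether by constructing a direct shelling on $\Delta(\mathcal{P}(\mathbb{D}(\mathbf{a})))$ that interleaves shellings of the blocks. In either approach, the bulk of the work should be identifying the correct non-pure edge labels on the new basic blocks and verifying, interval by interval, the rising-chain uniqueness and lexicographic-minimality conditions of Definition~\ref{def:EL}.
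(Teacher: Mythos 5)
The statement you are proving is Conjecture~\ref{conj:key}; the paper does not prove it, and your proposal does not close it either, so the honest assessment is that you have correctly reproduced the one direction that is known and left the other direction as a research plan. Your forward direction is fine: the six forbidden patterns are exactly those of Proposition~\ref{prop:keynecrank}, so ``shellable implies pattern-avoiding'' is its contrapositive, and this is precisely why the authors state part $(b$-$iv)$ of that proposition even though Theorem~\ref{thm:kshell} does not need it.

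For the backward direction, what you have written is a strategy outline with the decisive steps missing, and the gaps you name are exactly the reasons the statement remains a conjecture rather than a theorem. Concretely: (1) you do not classify the compositions that avoid all six patterns but fail to be pure, nor do you verify that every such composition admits a block decomposition with the separation property $\min(\alpha_{j-1})\ge\max(\alpha_j)$ --- without that, the proposed extension of Proposition~\ref{prop:kdp} has nothing to apply to; (2) you do not prove shellability of even one of the hypothetical new non-pure block types, and the paper's machinery (Lemma~\ref{lem: hook covering}, the cell-labeling, Theorem~\ref{thm:HrrCshell}) leans on structural features of hook diagrams that need not survive; (3) Theorem~\ref{thm:pshell} as quoted requires graded factors, so the product step is unsupported --- though the non-pure product theorem of Bj\"orner and Wachs in \textbf{\cite{npshell2}} is a plausible substitute, you would need to invoke and verify it. Your plan is a reasonable outline of how one might attack the conjecture, and it correctly mirrors the structure of the paper's proof of the graded case (Theorem~\ref{thm:kshell}), but as it stands the backward implication is not established.
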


\noindent
Note the similarity of Conjecture~\ref{conj:shell} with Conjecture 8.1 of \textbf{\cite{KPoset1}}. 

Along with the characterizations for shellability, we also found that, for the families of diagrams considered here, (EL-)shellability of the Kohnert poset had some interesting polynomial consequences. More specifically, for diagrams with either one cell per nonempty column or the first two rows empty, (EL-)shellability of the Kohnert poset was equivalent to the associated Kohnert polynomial being multiplicity free. On the other hand, for key diagrams, we found that the Kohnert poset being pure and (EL-)shellable only implied that the Kohnert polynomial was multiplicity free. Recall that the weak composition $\mathbf{a}=(0,3,3)$ generated a key polynomial $\mathfrak{K}_{\mathbb{D}(\mathbf{a})}$ which was mutliplicity free, but $\mathcal{P}(\mathbb{D}(\mathbf{a}))$ was not shellable. The authors wonder if there is a stronger polynomial property equivalent to (EL-)shellability in the case of key diagrams.

In addition to the conjectures listed above, many interesting questions remain concerning Kohnert posets. For example, it is of interest to the authors whether results similar to those contained in this article can be obtained for Kohnert posets arising from Rothe diagrams. Given a permutation $w=[w_1,\dots,w_n]$, the associated Rothe diagram is defined as $\mathbb{D}(w)=\{(i,w_j)~|~i<j\text{ and }w_i>w_j\}\subset\mathbb{N}\times\mathbb{N}$, and it was shown in \textbf{\cite{AssafSchu,Winkel2,Winkel1}} that the Kohnert polynomial $\mathfrak{K}_{\mathbb{D}(w)}$ is the Schubert polynomial corresponding to $w$. Thus, a result analogous to Theorem~\ref{thm:keymf} for Kohnert posets of Rothe diagrams would not only shed light on the relationship between the behaviors of Kohnert posets and Schubert polynomials, but it would also suggest a more general phenomenon. In fact, based on experimental evidence, it appears that EL-shellability of Kohnert posets associated with so-called ``southwest" diagrams -- a family of diagrams that contains key and Rothe diagrams as subfamilies (see \textbf{\cite{KP1}}) -- implies that each corresponding Kohnert polynomial is multiplicity free. In a slightly different direction, the authors are aware that E. Philips is working on identifying those Kohnert posets that are lattices \textbf{\cite{E24}}.

\printbibliography

\end{document}